\documentclass[11pt,bezier]{article}
\usepackage[usenames,dvipsnames]{xcolor}
\usepackage{amsmath,amssymb,amsfonts,euscript,graphicx,amsthm,mathrsfs}
\usepackage[colorlinks=true,citecolor=blue,linkcolor=red]{hyperref}
\usepackage[nameinlink]{cleveref}
\usepackage[all]{xy}
\usepackage{multicol}
\usepackage{enumerate}
\usepackage{tikz}

\usepackage[top=3cm, bottom=3cm, left=2.5cm, right=3cm]{geometry}
  \newtheorem{The}{Theorem}[section]
  \newtheorem{Pro}[The]{Proposition}
  \newtheorem{Lem}[The]{Lemma}
  \newtheorem{Cor}[The]{Corollary}
  \newtheorem{Def}[The]{Definition}
  
  \newtheorem{Rem}[The]{Remark}
  
  \newtheorem{Examp}[The]{Example}

\newtheorem{Ques}[The]{Question}

\newcommand{\Z}{\mathcal{Z}}

\let\oldproofname=\proofname
\renewcommand{\proofname}{\textit{\rm\bf\oldproofname}}

\title{\bf\Large Weakly uniserial dimension of modules
  \thanks
{{\it Key Words}:  Uniserial module, Weakly uniserial module, Weakly uniserial dimension }
\author { {\bf N. Shirali$^{\rm a,}$\thanks{Corresponding author.}}~,   {\bf R. Beyranvand$^{\rm b}$}   {and \bf S. Shirzadi$^{\rm b}$} \\
{\small{ $^{{\rm a}}$ Department of Mathematics, Shahid Chamran university of Ahvaz, Ahvaz, Iran}}\\
{\small{ $^{{\rm b}}$ Department of Mathematics, Faculty of Science, Lorestan University,  Khorramabad, Iran}}\\
   {\small{shirali\_n@scu.ac.ir}}\vspace{-1mm}\\
   {\small{beyranvand.r@lu.ac.ir}}\vspace{-1mm}\\
    {\small{shirzadi.sa@fs.lu.ac.ir}}\vspace{-1mm}\\
   {\small{}}}}
  \date{}

\begin{document}
\maketitle
\begin{abstract}
Recall that a module is called weakly uniserial if its submodules are comparable regarding
embedding. Weakly uniserial modules are a nontrivial generalization of uniserial modules.
In this paper we define and study a new dimension, which  measure how
far a module deviates from being weakly uniserial. We call this dimension, weakly uniserial dimension.
Also, we define and study monoartinian (mononoetherian)  modules.
We say that an $R$-module $M$ is monoartinian (mononoetherian)  if in every descending (ascending)
chain of submodules of $M$, except probably a finite number, each module in chain embedded in 
 the next (previous) one.  We show that a module  has weakly uniserial dimension if and only if it is  monoartonian.
\end{abstract}
 
\section{\hspace{-6mm}. Introduction}
Recall that an $R$-module $M$ is said to be \textit{uniserial} if its submodules are linearly ordered by
inclusion. An $R$-module $M$ is called \textit{serial} if it is a direct sum of uniserial modules. A \textit{left} (resp.,
\textit{right}) \textit{uniserial} ring is a ring which is uniserial as a left (resp., right) module. Also, a ring $R$ is
called uniserial (resp., serial) if it is both a left and a right uniserial (resp., serial) ring. 
Dimensions like Gelfand, Krull, Goldie have an important role in the study
of theory of rings and modules. In \cite{Naz}, Z. Nazemian et al.,  introduced \textit{uniserial dimension} of modules. In order to defined uniserial dimension for modules over a ring $R$, they first define, by transfinite induction, classes $\zeta_{\alpha}$ of $R$-modules for all ordinals $\alpha \geq 1$.
 To start with, let $\zeta_{1}$ be the class of non-zero uniserial modules.
 Next, consider an ordinal $\alpha > 1$; if $\zeta_{\beta}$ has been defined for all ordinals $\beta < \alpha$,
let $\zeta_{\alpha}$ be the class of those $R$-modules $M$ such that, for every submodule $N < M$, where $M/N \ncong M$, we have $M/N \in \bigcup_{\beta <\alpha} \zeta_{\beta}$.
If an $R$-module $M$ belongs to some  $\zeta_{\alpha}$, then the least such $\alpha$ is the uniserial
dimension of $M$, denoted ${\rm u.s.dim}(M)$. For $M = 0$,   ${\rm u.s.dim}(M)=0$ and if  $M \neq 0$ and $M$ does not belong to any $\zeta_{\alpha}$, then  $M$ has no uniserial dimension. 
they have shown that for a ring $R$ and an ordinal number $\alpha$,
there exists an $R$-module of uniserial dimension $\alpha$. It has been shown that  
a commutative ring $R$ is Noetherian (resp. Artinian) if and only
if every finitely generated $R$-module has (resp. finite) uniserial
dimension. Also, they have shown that every right $R$-module has
uniserial dimension if and only if the free right $R$-module  $\oplus_{i=1}^{\infty}R$
has uniserial dimension if and only if $R$ is a semisimple Artinian
ring.

In \cite{Gho}, \textit{Couniserial dimension} of modules, bas been  introduced  by A. Ghorbani et al.
 Couniserial dimension is a measure of how far a module deviates from being uniform. Note that if a module $M$ is isomorphic to all its non-zero submodules, then $M$ must be uniform. 
In order to defined couniserial dimension for modules
over a ring $R$, they first defined, by transfinite induction, classes $\zeta_{\alpha}$ of $R$-modules for all
ordinals $\alpha \geq 1$. To start with, let  $\zeta_{1}$ be the class of all uniform
modules. Next, consider an ordinal $\alpha > 1$; if $\zeta_{\beta}$ has been defined for all ordinals
$\beta < \alpha$, let $\zeta_{\alpha}$ be the class of those $R$-modules $M$ such that for every non-zero submodule $N$ of $M$, where $N \ncong M$, we have $N \in \bigcup_{\beta <\alpha} \zeta_{\beta}$. If an $R$-module $M$ belongs to some $\zeta_{\alpha}$, then the least such $\alpha$ is called the couniserial dimension of $M$, denoted by
${\rm c.u.dim}(M)$. For $M = 0$,  ${\rm c.u.dim}(M)=0$. If a non-zero module $M$ does
not belong to any $\zeta_{\alpha}$, then  $M$ has no couniserial dimension.  They  have   proved  that each module with finite  couniserial dimension  has finite uniform dimension.
Also,  it has been shown that every module of finite length has couniserial dimension and its value lies between the uniform dimension and the length of the module. Modules with countable couniserial dimension are shown to
possess indecomposable decomposition. If the maximal right quotient ring of a semiprime right non-singular ring $R$ has a couniserial dimension as an $R$-module, then $R$ is a semiprime right Goldie ring. They have shown that
 all right $R$-modules have couniserial dimension if and only if $R$ is a semisimple artinian ring.

In \cite{Shi}, we introduced and studied the class of weakly uniserial modules and rings as
a nontrivial generalization of uniserial modules and rings. An $R$-module $M$ is called \textit{weakly uniserial} if  for any two submodules $N$ and $K$ of $M$, $N$ is embedded in $K$ or $K$ is embedded in $N$. Also, a \textit{right} (resp., \textit{left}) \textit{weakly uniserial} ring is a ring which is weakly uniserial as a right (resp., left) $R$-module. 
In \cite{Shi2},
in addition to providing more properties of these modules, we introduce and investigate \textit{weakly serial} modules that are a direct sums of weakly uniserial modules. 

In this paper we define and study \textit{weakly uniserial dimension} of modules. It is an ordinal-valued invariant
that measures how far a module is from being weakly uniserial.
Throughout this paper, all rings have identity elements and all modules are unitary right modules.
For a ring $R$, the Jacobson radical and the right singular  ideal of $R$ are denoted by
${\rm J}(R)$ and $ \Z(R_{R})$, respectively. For a module $M$, the socle, the
injective hull and the singular submodule of $M$ are denoted by ${\rm Soc}(M)$, ${\rm E}(M)$ and $\Z(M)$, respectively. Also for a subset $X$ of $R$, the right (resp., left) annihilator
of $X$ in $R$ is denoted by ${\rm r.Ann}_{R}(X)$ (resp., ${\rm l.Ann}_{R}(X)$). By $K \subseteq M$ we usually mean that $K$ is a submodule of $M$ and the notation $N \subseteq_{e} M $ means that $N$ is an essential submodule of $M$. For  any two $R$-modules $M, N$ if there exists an $R$-monomorphism from $M$ to $N$, we write $ M\rightarrowtail N$  otherwise we write $ M\not\rightarrowtail N$.

In order to define weakly uniserial dimension for modules over a ring $R$, we first define, by transfinite induction, classes $\xi_{\alpha}$ of $R$-modules for all ordinals   $\alpha \geq 1$. To start with, let $\xi_{1}$  be the class of non-zero weakly uniserial modules. Next, consider an ordinal $\alpha > 1$; if  $\xi_{\beta}$  has been defined for all ordinals $\beta < \alpha$, let  $\xi_{\alpha}$   be the class of those $R$-modules $M$ such that, for every submodule $N \subsetneq M$, where $M \not\rightarrowtail N$, we have  $N \in \bigcup_{\beta < \alpha} \xi_{\beta}$. If an $R$-module $M$ be longs to some  $\xi_{\alpha}$, then the least such $\alpha$ is the weakly uniserial dimension of $M$, denoted ${\rm w.dim}(M)$. For $M=0$, we define ${\rm w.dim}(M)=0$. If $M$ is non-zero and $M$ does not belong to any  $\xi_{\alpha}$, then we say that ''${\rm w.dim}(M)$ is not defined'', or that ''$M$ has no weakly uniserial dimension''.

The paper is organized as follows. In Section $2$, we give some basic properties of  modules with weakly
uniserial dimension. It is shown  that a module  has weakly uniserial dimension if and only if it is monoartinian (see \Cref{1}).
We showed that a semisimple module has  weakly uniserial dimension if and only if it is a finite direct sum of homogeneous semisimple modules (see \Cref{semisimple}). In \Cref{injective z-module}, we prove that every injective $\mathbb{Z}$-module with weakly uniserial dimension is isomorphic to a finite direct sum of $\mathbb{Z}_{p^{\infty}}$, where $p$ belongs to the set of prime numbers.
In Section $3$, we study  notions of  monoartinian and mononoetherian modules and rings. It is shown that being monoartinian (mononoetherian) is a Morita invariant  property and if  $R$ is a right  monoartinian  (mononoetherian) ring, then $R[x]$ is right monoartinian  (mononoetherian) ring  (see \Cref{morita} and \Cref{poly} ).  Also in \Cref{Goldie},  we showed that every commutative semiprime monoartinian ring is a Goldie ring. Over a commutative ring  $R$  every mononoetherian  $R$-module has finite uniform dimension (see \Cref{u.dim}). Also,
among the other  properties of monoartinina (mononoetherian) modules and rings, we anwer the following question:
\begin{Ques}
 Which rings R have the property that every  right R-module is monoartinian?
\end{Ques}

\section{\hspace{-6mm}. Weakly uniserial dimension}
In this section, we give some properties of modules and rings with weakly uniserial dimension. 
Recall that  a right $R$-module $M$ is said to be {\it weakly uniserial} if for any two submodules $N$ and $K$ of $M$, $ N\rightarrowtail K$ or $ K\rightarrowtail N$.  A {\it left} (resp., {\it right}) {\it weakly uniserial ring} is a ring which is weakly uniserial as a left (resp., right) module. As usual, a ring $R$ is called {\it weakly uniserial} if it is both a left and a right weakly uniserial ring (see \cite{Shi}).

\begin{Def}
{\rm
In order to define \textit{weakly uniserial dimension} for modules over a ring $R$, we first define, by transfinite induction, classes $\xi_{\alpha}$ of $R$-modules for all ordinals   $\alpha \geq 1$. To start with, let $\xi_{1}$  be the class of non-zero weakly uniserial modules. Next, consider an ordinal $\alpha > 1$; if  $\xi_{\beta}$  has been defined for all ordinals $\beta < \alpha$, let  $\xi_{\alpha}$   be the class of those $R$-modules $M$ such that, for every submodule $N \subsetneq M$, where $M \not\rightarrowtail N$, we have  $N \in \bigcup_{\beta < \alpha} \xi_{\beta}$. If an $R$-module $M$ be longs to some  $\xi_{\alpha}$, then the least such $\alpha$ is the weakly uniserial dimension of $M$, denoted ${\rm w.dim}(M)$. For $M=0$, we define ${\rm w.dim}(M)=0$. If $M$ is non-zero and $M$ does not belong to any  $\xi_{\alpha}$, then we say that ''${\rm w.dim}(M)$ is not defined'', or that ''$M$ has no weakly uniserial dimension''.
}
\end{Def}

\begin{Rem}\label{rem1}
{\rm
We make the convention that a statement ''${\rm w.dim}(M) = \alpha$”  will mean
that the weakly uniserial dimension of $M$ exists and equals $\alpha$. By the definition of weakly uniserial
dimension, if $M$ has weakly uniserial dimension and $N$ is a submodule of $M$, then $N$
has weakly uniserial dimension and ${\rm w.dim}(N) \leq {\rm w.dim}(M)$. Moreover, if $M$ is not
weakly uniserial and ${\rm w.dim}(M) = {\rm w.dim}(N)$, where $N$ is a submodule of $M$, then 
$M \rightarrowtail N$.
 On the other hand, since every set of ordinal numbers has supremum, it follows
immediately from the definition that $M$ has weakly uniserial dimension if and only if for
all submodules $N$ of $M$ with $M \not\rightarrowtail N$, ${\rm w.dim}(N)$ is defined. In the latter case, if 
$\alpha= {\rm sup}\lbrace {\rm w.dim}(N) ~| ~N \subsetneq M,~ M \not\rightarrowtail N \rbrace$, then
$ {\rm w.dim}(M)  \leq \alpha + 1$.
}
\end{Rem}

\begin{Lem}\label{وسطی}
If $M$ is an $R$-module and ${\rm w.dim}(M) = \alpha$, then for any $0 \leq \beta \leq \alpha$,
there exists a submodule $N$ of $M$ such that ${\rm w.dim}(N)=\beta $.
\end{Lem}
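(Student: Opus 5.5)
We argue by transfinite induction on $\alpha={\rm w.dim}(M)$, showing that every value $\beta\le\alpha$ is attained by some submodule of $M$. The endpoints are immediate. For $\beta=0$ take $N=0$, and for $\beta=\alpha$ take $N=M$. So we may assume $0<\beta<\alpha$; in particular $\alpha\ge 2$, so $M$ is not weakly uniserial.

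The key step is the following claim: for every ordinal $\gamma<\alpha$ there is a submodule $N\subsetneq M$ with $M\not\rightarrowtail N$ and $\gamma\le{\rm w.dim}(N)<\alpha$. Let
\[
\delta=\sup\{{\rm w.dim}(N)\mid N\subsetneq M,\ M\not\rightarrowtail N\}.
\]
All these dimensions exist by \Cref{rem1}. Each such $N$ lies in $\bigcup_{\beta'<\alpha}\xi_{\beta'}$, because $M\in\xi_\alpha$ and $\alpha>1$. Hence every ${\rm w.dim}(N)$ in this set is $<\alpha$.

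We now show that $M\in\xi_{\delta+1}$. This uses $\delta+1>1$, which holds provided $\delta\ge 1$. Indeed, every $N$ in the set above has ${\rm w.dim}(N)\le\delta<\delta+1$. It follows that $\alpha\le\delta+1$.

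Suppose the claim fails for some $\gamma<\alpha$. Then every such $N$ has ${\rm w.dim}(N)<\gamma$, so $\delta\le\gamma$. This gives $\alpha\le\gamma+1$. Combined with $\gamma<\alpha$, we get $\alpha=\gamma+1$ and $\delta=\gamma$.

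The limit and sup subtleties need some care here, and I expect this to be the main obstacle. The case $\alpha=\gamma+1$, where the sup $\delta=\gamma$ is not attained, has to be ruled out. Two points need checking:
\begin{itemize}
\item If $\delta$ is not attained and $\delta$ is a limit ordinal, then every $N$ lies in $\bigcup_{\beta'<\delta}\xi_{\beta'}$. This gives $M\in\xi_\delta$, contradicting $\alpha=\delta+1$.
\item If $\delta$ is not attained and is a successor, then the sup is attained, since a supremum of a set of ordinals that is a successor must belong to the set. This contradicts non-attainment.
\end{itemize}
Hence some $N$ has ${\rm w.dim}(N)=\delta\ge\gamma$, and the claim holds.

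Degenerate cases also need attention. If the set is empty, or $\delta=0$, then $M$ belongs to $\xi_2$ vacuously or nearly so, and $\alpha\le 2$. With $0<\beta<\alpha$ this forces $\beta=1$ and $\alpha=2$. In that situation some nonzero $N$ with $M\not\rightarrowtail N$ must exist, since otherwise $M$ would be weakly uniserial: a non-comparable pair $K,L$ forces $M\not\rightarrowtail K$ or similar. Such an $N$ has dimension $1$.

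With the claim in hand, apply it to $\gamma=\beta$. This yields $N\subseteq M$ with $\beta\le{\rm w.dim}(N)<\alpha$. The induction hypothesis applied to $N$ gives a submodule $N'\subseteq N$ with ${\rm w.dim}(N')=\beta$. Since $N'$ is a submodule of $M$, this completes the induction.
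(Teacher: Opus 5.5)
Your proof is correct and takes the paper's route. The paper also argues by transfinite induction, picks a proper submodule $K$ with $M\not\rightarrowtail K$ and $\beta\le{\rm w.dim}(K)<\alpha$, and applies the induction hypothesis to $K$. The difference is that the paper only cites \Cref{rem1} for the existence of $K$. You actually prove it, including the case where the supremum $\delta$ is a limit ordinal that is not attained. That case needs $M\in\xi_\delta$, which the bare bound ${\rm w.dim}(M)\le\delta+1$ from \Cref{rem1} does not give.
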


\begin{proof}
The proof is by transfinite induction on ${\rm w.dim}(M) = \alpha$. The case $\alpha =1$ is
clear. Let $\alpha > 1$ and $0 \leq \beta <\alpha$. Then, by \Cref{rem1}, there exists a submodule
$K$ of $M$ such that  $M \not\rightarrowtail K$ and $\beta \leq {\rm w.dim}(K)$. Then $\beta \leq {\rm w.dim}(K) < \alpha$ and so by induction hypothesis, there exists a submodule $N$ of $K$ such that ${\rm w.dim}(N)=\beta$.	
\end{proof}

\begin{Def}
{\rm
Let $R$ be a ring and $M$ be a right $R$-module. We say that $M$ is \textit{monoartinian}
if, for every descending chain $M \supseteq M_{1} \supseteq M_{2} \supseteq \cdots $ of submodules of $M$, there exists an index $n \geq 1$ such that $M_{n}$ is embedded in $M_{i}$ for every $ i \geq n$.
}
\end{Def}

The following theorem provides a working criterion for a module to have  weakly uniserial dimension.

\begin{The}\label{1}
A module  has weakly uniserial dimension if and only if it is monoartinian.
\end{The}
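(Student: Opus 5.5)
We prove both directions separately. The forward direction uses the ordinal-valued invariant ${\rm w.dim}$ to force a descending chain to stabilize up to embedding. The converse is by contraposition: from a module with no weakly uniserial dimension we build a bad chain.

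\emph{($\Rightarrow$)} Suppose ${\rm w.dim}(M)$ exists and let $M\supseteq M_1\supseteq M_2\supseteq\cdots$ be a descending chain. If some $M_n=0$, then $M_i=0$ for all $i\ge n$ and the condition holds trivially. Otherwise every $M_i$ has weakly uniserial dimension by \Cref{rem1}, and ${\rm w.dim}(M_1)\ge {\rm w.dim}(M_2)\ge\cdots$ is a non-increasing sequence of ordinals. It is therefore eventually constant, say equal to $\alpha$ for all $i\ge n$.

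If $\alpha=1$, then $M_n$ is weakly uniserial. So for $i\ge n$ either $M_n\rightarrowtail M_i$, or $M_i\rightarrowtail M_n$. This is not immediately enough, and I will return to it below.

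If $\alpha>1$, then for each $i\ge n$ we have ${\rm w.dim}(M_i)={\rm w.dim}(M_n)$ with $M_n$ not weakly uniserial. By \Cref{rem1} this gives $M_n\rightarrowtail M_i$. Concretely, if $M_n\not\rightarrowtail M_i$ with $M_i\subsetneq M_n$, the definition of $\xi_\alpha$ forces ${\rm w.dim}(M_i)<\alpha$, a contradiction.

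For the case $\alpha=1$ there is a gap: weak uniseriality gives comparability under embedding, not embedding in the specific direction $M_n\rightarrowtail M_i$. I would revisit the definition at this point. In the paper's convention, $\xi_1\subseteq\xi_\alpha$ should be read so that a weakly uniserial $M$ satisfies the monoartinian condition directly. One option is to check whether weakly uniserial modules are monoartinian using the results of \cite{Shi}. Alternatively, one can note that the definition of $\xi_2$ contains all weakly uniserial modules only if their proper non-embedding submodules are weakly uniserial, which is automatic since submodules of weakly uniserial modules are weakly uniserial. This is the step I expect to be the main obstacle, because a strictly descending chain inside a weakly uniserial module in which each term embeds into the previous one but not conversely would be a counterexample. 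I would first try to show that in a weakly uniserial module one can pass to a tail where this does not happen, for instance by comparing $M_n$ with $M_i$ and using that $M_i\subseteq M_n$.

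\emph{($\Leftarrow$)} Suppose $M\neq 0$ has no weakly uniserial dimension. By \Cref{rem1} there is $M_1\subsetneq M$ with $M\not\rightarrowtail M_1$ and ${\rm w.dim}(M_1)$ undefined. Apply the same reasoning to $M_1$ to get $M_2\subsetneq M_1$ with $M_1\not\rightarrowtail M_2$ and $M_2$ again without dimension, and continue using dependent choice. This produces a chain $M\supsetneq M_1\supsetneq M_2\supsetneq\cdots$ with $M_i\not\rightarrowtail M_{i+1}$ for all $i$.

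For every $n$, then, $M_n\not\rightarrowtail M_{n+1}$, so $M_n$ does not embed in all later terms. Hence $M$ is not monoartinian, which proves the converse.
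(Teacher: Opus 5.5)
You have found the real problem. Your $(\Leftarrow)$ direction and your case $\alpha>1$ in $(\Rightarrow)$ are exactly the paper's argument. The paper writes $\gamma=\inf\{{\rm w.dim}(M_i)\}$ where you say the sequence is eventually constant, which amounts to the same thing. The genuine gap is the one you flagged: when the eventual value is $\alpha=1$, the definition of $\xi_1$ does not force a submodule $N\subsetneq M_n$ with $M_n\not\rightarrowtail N$ to have smaller dimension.

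None of your suggested remedies can close this gap, because the implication is false at level $1$. Your $\xi_2$ remark is true but irrelevant, since a weakly uniserial module still has dimension $1$.

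\emph{Counterexample.} Let $V$ be a valuation domain with value group $\mathbb{Q}$, for example the valuation ring of the Puiseux series field over a field. Let $K$ be its fraction field, $v$ the valuation, and $M=K/V$.
\begin{itemize}
\item The $V$-submodules of $K$ are totally ordered, so $M$ is uniserial and hence ${\rm w.dim}(M)=1$.
\item Choose $a_k\in V$ with $v(a_k)=1/k$ and put $M_k=a_k^{-1}V/V$. Then $M_1\supsetneq M_2\supsetneq\cdots$.
\item Each $M_k\cong V/a_kV$ is annihilated by $a_k$.
\item For $i>n$ we have $v(a_ia_n^{-1})<0$, so $a_i$ annihilates $M_i$ but not $M_n$. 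Hence $M_n\not\rightarrowtail M_i$.
\end{itemize}
So $M$ is not monoartinian. This is precisely the strictly descending chain inside a weakly uniserial module that you anticipated.

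The paper's own proof does not handle this case either. It asserts that $M_n\not\rightarrowtail M_l$ forces ${\rm w.dim}(M_l)<{\rm w.dim}(M_n)$ ``by definition''. That is the defining property of $\xi_\gamma$ only for $\gamma>1$, so the step fails silently when $\gamma=1$.

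What your argument, and the paper's, actually proves is the following: $M$ is monoartinian if and only if ${\rm w.dim}(M)$ exists and every weakly uniserial submodule of $M$ is monoartinian. The extra hypothesis is exactly what finishes your case $\alpha=1$. Apply it to the tail $M_n\supseteq M_{n+1}\supseteq\cdots$, which lies inside the weakly uniserial module $M_n$.
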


\begin{proof}
$(\Rightarrow)$ Let $M$ be a module with weakly uniserial dimension and
 $M_{1} \supseteq M_{2} \supseteq \cdots$ be a descending chain of submodules of it. Put
$\gamma = {\rm inf} \lbrace{\rm w.dim}(M_{i})~ | ~i \geq 1\rbrace$. So $\gamma = {\rm w.dim}(M_{n})$ for some $n \geq 1$. Then $M_{n} \rightarrowtail M_{k}$, for any $k \geq n$ or there exists $l \geq n$ such that $M_{n} \not\rightarrowtail M_{l}$. If  $M_{n} \not\rightarrowtail M_{l}$, then by definition, ${\rm w.dim}(M_{l}) \lneq {\rm w.dim}(M_{n})=\gamma$, a contradiction. Therefore,  $M_{n} \rightarrowtail M_{k}$,  for any $k \geq n$ and so $M$ is monoartinian.

$(\Leftarrow)$ Assume that $M$ does not have weakly uniserial dimension. Then $M$ is not weakly uniserial and so there
exists a submodule $M_{1}$ of $M$ such that $M\not\rightarrowtail M_{1}$ and $M_{1}$ does not have weakly uniserial
dimension, by \Cref{rem1}. So there exists a submodule $M_{2}$ of $M_{1}$ such that
 $M_{1}\not\rightarrowtail M_{2}$ and $M_{2}$ does not have weakly uniserial
dimension. Continuing in this manner, we obtain a descending chain of submodules 
$M_{1} \supsetneq M_{2} \supsetneq \cdots$, 
such that for every $i \geq 1$, $M_{i}\not\rightarrowtail M_{i+1}$, a contradiction. This
completes the proof.
\end{proof}

A right $R$-module $M$ is called \textit{duo} provided every submodule of $M$ is fully invariant \cite{Ozc}.

\begin{Lem}\label{injective duo}
{\rm (\cite[page 185]{Bum})} Let $B$ be a fully invariant submodule of an injective $R$-module $E$
and $A \subseteq E$. If there is an $R$-monomorphism from $A$ to $B$, then $A \subseteq B$.
\end{Lem}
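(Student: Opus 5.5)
The plan is to use injectivity of $E$ to extend the \emph{inverse} of the given monomorphism to an endomorphism of $E$, and then use full invariance of $B$ to push $A$ inside $B$. Extending the monomorphism $f$ itself is a dead end: it only tells us that the image of $A$ lies in $B$, not that $A$ does.

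Let $f\colon A\to B$ be an $R$-monomorphism and put $C=f(A)\subseteq B\subseteq E$. Since $f$ is injective, it induces an isomorphism $A\cong C$. Hence there is an $R$-homomorphism
\[
g=f^{-1}\colon C\to A\subseteq E
\]
satisfying $g(f(a))=a$ for all $a\in A$.

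Since $E$ is injective and $C$ is a submodule of $E$, $g$ extends to an endomorphism $h\colon E\to E$ with $h|_{C}=g$. Because $B$ is fully invariant in $E$, we have $h(B)\subseteq B$. Using $C\subseteq B$, this gives
\[
A=g(C)=h(C)\subseteq h(B)\subseteq B,
\]
which is the claim.

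No step is a serious obstacle. The only point needing care is choosing which map to extend: the argument works because $C=f(A)$ is a submodule of $E$ on which $f^{-1}$ is defined, and injectivity of $E$ lets us extend any map from a submodule of $E$ into $E$.
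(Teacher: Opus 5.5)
Your proof is correct: extending the inverse isomorphism $f(A)\to A$ to an endomorphism $h$ of $E$ and using full invariance $h(B)\subseteq B$ gives $A=h(f(A))\subseteq B$. The paper gives no proof of its own and only cites Bumby; the argument in that source is, as far as I recall it, essentially this same one.
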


\begin{Cor}
Let  $M$  be an injective duo right $R$-module. Then the following statements hold:\\
\indent{{\rm{(a)}}} $M$ has weakly uniserial dimension if and only if  it is artinian.\\
\indent{{\rm{(b)}}} If  $M$  has weakly uniserial dimension, then it  has a uniserial submodule. Moreover, $M$
 is \indent\indent a finite direct sum of  indecomposable  modules.
\end{Cor}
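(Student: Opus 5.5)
Everything will be routed through \Cref{1}, which reduces having weakly uniserial dimension to being monoartinian, and through \Cref{injective duo}. The key observation is that when $M$ is injective and duo, embeddings between submodules of $M$ coincide with inclusions. Indeed, let $A,B \subseteq M$. Since $M$ is duo, $B$ is fully invariant in $M$. Since $M$ is injective, \Cref{injective duo} shows that if $A \rightarrowtail B$ then $A \subseteq B$. The converse, that inclusion gives an embedding, is trivial.

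For (a), suppose first that $M$ is artinian. Every descending chain then stabilizes, so from some index on $M_n = M_i$, and the identity embeds $M_n$ into $M_i$. Hence $M$ is monoartinian and has weakly uniserial dimension by \Cref{1}. Conversely, suppose $M$ has weakly uniserial dimension. By \Cref{1} it is monoartinian, so for any chain $M_1 \supseteq M_2 \supseteq \cdots$ there is $n$ with $M_n \rightarrowtail M_i$ for all $i \ge n$. By the observation above this means $M_n \subseteq M_i$, hence $M_n = M_i$ for $i\ge n$. So the chain stabilizes and $M$ is artinian.

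For (b), assume $M \neq 0$ has weakly uniserial dimension; by (a) it is artinian. I will produce a uniserial submodule, although the phrase ``uniserial submodule'' presumably means a nonzero one. Since $M$ is artinian, ${\rm Soc}(M)$ is essential in $M$, so $M$ contains a simple submodule $S$. Any simple module is uniserial, which gives the first claim. Alternatively, I could use \Cref{وسطی} with $\beta=1$ to get a weakly uniserial submodule $N$. Any two submodules of $N$ embed one into the other, and by the observation they are then comparable by inclusion, so $N$ is in fact uniserial. Either route works; I will present the \Cref{وسطی} route because it matches the theme of the paper, and mention the simple submodule as a sanity check.

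For the decomposition, I will use that an artinian module has finitely generated essential socle. This is because ${\rm Soc}(M)$ is semisimple and artinian, hence a finite direct sum $S_1\oplus\cdots\oplus S_k$ of simple modules. Then $M = E(M) = E(S_1)\oplus\cdots\oplus E(S_k)$, with each $E(S_i)$ injective and uniform, hence indecomposable. Each $E(S_i)$ can be realized as a direct summand of $M$ by taking injective hulls inside $M$. The main point to check is that this finite decomposition exists; it follows from the essentiality of the socle together with the uniqueness of injective hulls. Since everything reduces to standard artinian facts, I expect no real obstacle. The only delicate step is the embedding-equals-inclusion observation, which relies on $B$ being fully invariant and is exactly what \Cref{injective duo} provides.
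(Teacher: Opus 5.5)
\textbf{Comparison with the paper.} Your proof follows the paper's. For (a) you combine \Cref{1} with \Cref{injective duo}, and you also check the easy converse. For the uniserial submodule in (b) you use the lemma giving submodules of every smaller dimension, with $\beta=1$. You differ only in the decomposition: you take injective hulls of the finitely many simple summands of the essential socle, where the paper cites the general fact that artinian modules are finite direct sums of indecomposables.

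\textbf{The gap.} There is a genuine gap, inherited from the paper, at the step ``$M$ has weakly uniserial dimension, so by \Cref{1} it is monoartinian''. The proof of \Cref{1}($\Rightarrow$) uses that $M_n\not\rightarrowtail M_l$ forces ${\rm w.dim}(M_l)<{\rm w.dim}(M_n)$. The definition gives this only when ${\rm w.dim}(M_n)>1$. The class $\xi_1$ is just the class of nonzero weakly uniserial modules, and these need not be monoartinian. In fact the ``only if'' in (a) is false.

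\textbf{Counterexample.} Let $R$ be the ring of Hahn series $\sum a_qt^q$ over a field with well-ordered support in $\mathbb{Q}\cap[0,1)$, where exponents $\geq 1$ are truncated.
\begin{itemize}
\item Every nonzero element is $t^qu$ with $u$ a unit, so $R$ is a commutative chain ring. Hence $R_R$ is duo and uniserial, and ${\rm w.dim}(R_R)=1$.
\item Let $0\le q<q'<1$ and $f\colon t^qR\to t^{q'}R$. Then $f(t^{1-q'}t^q)=t^{1-q'}f(t^q)\in t^{1}R=0$, while $t^{1-q'}t^q\neq 0$. So $t^qR\not\rightarrowtail t^{q'}R$.
\item Consequently the chain $R\supsetneq t^{1/2}R\supsetneq t^{2/3}R\supsetneq\cdots$ shows that $R_R$ is neither monoartinian nor artinian. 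Also ${\rm Soc}(R_R)=0$.
\item $R_R$ is injective by Baer's criterion. For $q>0$, a map $f$ on $t^qR$ satisfies $t^{1-q}f(t^q)=0$, which forces $f(t^q)\in t^qR$, so $f$ is multiplication by an element. A map on a non-principal ideal $\{z : v(z)>\alpha\}$ is a compatible family of such multiplications. These glue because $R$ is a quotient of the maximally complete Hahn-series valuation ring, hence linearly compact.
\end{itemize}
So no argument can close this step.

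\textbf{What survives.}
\begin{itemize}
\item For injective duo $M$, your embedding-equals-inclusion observation correctly gives ``artinian $\Leftrightarrow$ monoartinian''. The ``if'' part of (a) is fine via \Cref{1}($\Leftarrow$).
\item In (b), your chosen route through the intermediate-dimension lemma does give a nonzero uniserial submodule when $M\neq 0$.
\item Your ``sanity check'' through the socle relies on (a), and it fails for $R_R$ above. Likewise, both your socle-and-injective-hulls decomposition and the paper's citation rest on (a).
\end{itemize}
The decomposition is nevertheless true, and can be proved without (a). Suppose $M$ is not a finite direct sum of indecomposables. Repeatedly split off a nonzero summand, each time keeping a complement that is again not such a sum. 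This gives $M\supsetneq B_1\supsetneq B_2\supsetneq\cdots$ with $M$ and every $B_i$ decomposable.
\begin{itemize}
\item By your observation, a decomposable submodule $A\oplus B$ of $M$ is not weakly uniserial. Indeed, $A\rightarrowtail B$ would give $A\subseteq B$, hence $A=0$.
\item Also $B_i\not\rightarrowtail B_{i+1}$, since an embedding would give $B_i\subseteq B_{i+1}$.
\item Since each of these modules has dimension $>1$, the definition of $\xi_\alpha$ for $\alpha>1$ gives ${\rm w.dim}(M)>{\rm w.dim}(B_1)>{\rm w.dim}(B_2)>\cdots$.
\end{itemize}
This is an infinite descending chain of ordinals, a contradiction.
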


\begin{proof}
(a).  Assume that $M$  is an injective duo  right  $R$-module with weakly uniserial dimension. Then for every descending chain 
$ M_{1} \supseteq M_{2} \supseteq \cdots, $
of submodules of  $M$, there exists $n \geq 1$ such that $M_{n} \rightarrowtail  M_{i}$ for any  $i\geq n$, by \Cref{1}. Now,  \Cref{injective duo} implies that  $M_{n} \subseteq  M_{i}$, and so $M$  is artinian. 

(b). Since $M$ has weakly uniserial dimension, \Cref{وسطی} implies that $M$ has a weakly uniserial submodule, say $N$. Then for any two submodules $N_{1}$ and $N_{2}$ of  $N$ we have  $N_{1} \rightarrowtail N_{2}$  or     $N_{2} \rightarrowtail N_{1}$. Now, \Cref{injective duo} complete the proof. Moreover part, follows from (a) and \cite[Proposition 19.20]{Lam1}.
\end{proof}

Recall that two modules $M$ and $N$  are orthogonal if they have no non-zero isomorphic submodules; whereas $M$ is parallel to $N$  if no non-zero submodule of $M$ is orthogonal  to $N$  and symmetrically no non-zero submodule of $N$  is orthogonal  to  $M$. A module $M$  is called atomic if all its non-zero submodules are parallel to each other. It is easy to see that, weakly uniserial modules are atomic.
Also an $R$-module  $M$  is said to be  $p$-Artinian if for every descending chain 
$M_{1} \supseteq M_{2} \supseteq \cdots$
of submodules of  $M$, there exists $n \geq 1$ such that $M_{n}$  is parallel to $M_{i}$ for any   $i\geq n$.

\begin{Cor}\label{p-art}
\indent{{\rm{(a)}}} Every module with weakly uniserial dimension  is  $p$-Artinian.\\
\indent{{\rm{(b)}}} Every Artinian module has weakly uniserial dimension.
\end{Cor}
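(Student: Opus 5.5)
Both parts reduce to \Cref{1}, which identifies modules with weakly uniserial dimension with monoartinian modules. So the plan is to verify the relevant chain conditions directly.

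For (a), let $M$ have weakly uniserial dimension and take a descending chain $M_{1} \supseteq M_{2} \supseteq \cdots$ of submodules of $M$. By \Cref{1} there is an $n \geq 1$ with $M_{n} \rightarrowtail M_{i}$ for every $i \geq n$. It then suffices to prove the following fact.
\begin{quote}
If $A \subseteq B$ (or, more generally, $B \rightarrowtail A$) with $A, B$ submodules, then $A$ and $B$ are parallel.
\end{quote}
With $A = M_{i}$ and $B = M_{n}$, this gives that $M_{n}$ is parallel to $M_{i}$, so $M$ is $p$-Artinian.

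To prove the fact, let $f \colon B \rightarrowtail A$ be a monomorphism with $A \subseteq B$, and check both halves of the definition of parallel.
\begin{itemize}
\item Let $0 \neq X \subseteq A$. Then $X$ is itself a nonzero submodule of $B$, so $X$ and $B$ share the nonzero isomorphic submodule $X$. Hence $X$ is not orthogonal to $B$.
\item Let $0 \neq Y \subseteq B$. Then $f(Y) \cong Y$ and $f(Y) \subseteq A$, so $Y$ and $A$ have the nonzero isomorphic submodules $Y$ and $f(Y)$. Hence $Y$ is not orthogonal to $A$.
\end{itemize}
So $A$ and $B$ are parallel. The only point needing care is reading the definition of orthogonality correctly: it concerns isomorphic submodules, one taken in each module. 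Beyond that, (a) is routine.

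For (b), let $M$ be Artinian and take any descending chain $M_{1} \supseteq M_{2} \supseteq \cdots$. The chain stabilizes, so $M_{n} = M_{i}$ for all $i \geq n$ and some $n$. The identity map then embeds $M_{n}$ into each such $M_{i}$. Therefore $M$ is monoartinian and, by \Cref{1}, has weakly uniserial dimension.

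I do not expect any real obstacle in either part. The only thing to watch is that the chain in the definition of $p$-Artinian starts with $M_{1} \subseteq M$ rather than with $M$ itself. This is harmless, because submodules of a module with weakly uniserial dimension again have it (\Cref{rem1}), and in any case \Cref{1} applies to arbitrary chains of submodules of $M$.
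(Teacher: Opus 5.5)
Your proof is correct and follows the paper's argument: both parts go through \Cref{1}, and in (a) you verify the two halves of parallelism exactly as the paper does, using $M_i\subseteq M_n$ for one half and the monomorphism $M_n\rightarrowtail M_i$ for the other. One small fix: your auxiliary fact needs both hypotheses $A\subseteq B$ and $B\rightarrowtail A$, and your proof of it does in fact use both; neither alone suffices, since $\mathbb{Z}_2\subseteq\mathbb{Z}_2\oplus\mathbb{Z}_3$ and $\mathbb{Z}_2\rightarrowtail\mathbb{Z}_2\oplus\mathbb{Z}_3$, yet these two modules are not parallel.
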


\begin{proof}
(a). Assume that $M$ is a right $R$-module with weakly uniserial dimension and 
$M_{1} \supseteq M_{2} \supseteq \cdots$
is a descending chain of submodules of $M$.  By \Cref{1}, there exists $n \geq 1$ such that  $M_{n} \rightarrowtail M_{k}$, for all $k \geq n$.  Since $M_{k} \subseteq M_{n}$,  for any $ 0 \neq M'_{k} \subseteq M_{k}$, $M_{n}$ is not orthogonal to $M'_{k}$. Also since for every $ 0 \neq M'_{n} \subseteq M_{n}$, $M'_{n} \rightarrowtail M_{k}$ we conclude that 
$M'_{n}$ is  not orthogonal to $M_{k}$. Therefore,   $M_{n}$ is parallel to   $M_{k}$ for any  $k \geq n$ and so $M$ is a  $p$-Artinian module.

(b). The proof is clear by \Cref{1}.
\end{proof}

\begin{Examp}
{\rm
By \cite[Example 5.5]{Shirali}, the $\mathbb{Z}$-module  $\oplus_{i > 0} \mathbb{Z}_{p^{i}}$ is not p-artinian. Then  \Cref{p-art} implies that it has not weakly uniserial dimension.
}
\end{Examp}

\begin{Examp}\label{Q}
{\rm
$\mathbb{Q} $ has not weakly uniserial dimension as a $\mathbb{Z}$-module. To see this,
let $\lbrace p_{1}, p_{2}, \ldots \rbrace$ be the set of prime numbers in $\mathbb{Z}$. For each $i$, set 
$A_{i}=\lbrace  r/s ~|~ r, s \in \mathbb{Z} ~and ~p_{j} \nmid s ~for~ all ~1 \leq j \leq i \rbrace$.  It is clear that 
$A_{1} \supseteq A_{2} \supseteq \cdots$, is a descending chain
of $\mathbb{Z}$-submodules of $\mathbb{Q}$. Moreover for any $n \geq 1$, $A_{n}$ cannot be embedded in $A_{n+1}$, because one is
$p_{n}$-divisible and the other is not.
}
\end{Examp}

\begin{Pro}\label{injective z-module}
Every injective $\mathbb{Z}$-module with weakly uniserial dimension is isomorphic to a finite direct sum of $\mathbb{Z}_{p^{\infty}}$, where $p$ belongs to the set of prime numbers. 
\end{Pro}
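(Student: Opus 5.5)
Every injective $\mathbb{Z}$-module is divisible. By the structure theorem for divisible abelian groups, an injective $\mathbb{Z}$-module $E$ has the form
\[
E \cong \mathbb{Q}^{(I)} \oplus \bigoplus_{p} \mathbb{Z}_{p^{\infty}}^{(I_p)} .
\]
The plan has three steps: use \Cref{Q} to rule out torsion-free summands, use monoartinianity (\Cref{1}) to bound the total number of Pr\"ufer summands, and then read off the conclusion.

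\textbf{Step 1: no copy of $\mathbb{Q}$.} By \Cref{rem1}, every submodule of a module with weakly uniserial dimension also has weakly uniserial dimension. So if $I\neq\emptyset$, then $\mathbb{Q}$, being a submodule of $E$, would have weakly uniserial dimension. This contradicts \Cref{Q}, so $I=\emptyset$.

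\textbf{Step 2: finitely many Pr\"ufer summands.} We show that $\sum_p |I_p|$ is finite. Suppose not; then $E$ contains a countably infinite direct sum $\bigoplus_{k\ge1} P_k$, where each $P_k\cong\mathbb{Z}_{q_k^{\infty}}$ for some primes $q_k$. Set
\[
M_n=\bigoplus_{k\ge n} P_k ,
\]
which gives a descending chain. By \Cref{1}, $E$ is monoartinian, so we want to show this chain violates monoartinianity. That requires some $n$ with $M_n\rightarrowtail M_i$ failing for some $i>n$, and this fails in general: if all the $q_k$ equal a single prime $p$, then $M_n\cong M_i$ and the chain is harmless. This is the main obstacle, and the chain must be chosen more cleverly. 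Since $\mathbb{Z}_{p^\infty}$ is uniform, injective and indecomposable, I plan to compare socles rather than the modules themselves.

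\emph{Case of infinitely many distinct primes.} Take the $q_k$ pairwise distinct. A monomorphism $M_n\rightarrowtail M_i$ with $i>n$ must embed $\mathbb{Z}_{q_n^\infty}$ into $M_i$. But the $q_n$-torsion part of $M_i$ is zero, so no such embedding exists.

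\emph{Case of infinitely many copies of one prime $p$.} Here I use socle-layer submodules of unbounded height. Let
\[
M_n=\bigoplus_{k\ge1} x_k\mathbb{Z}, \qquad x_k\in P_k \text{ of order } p^{\max(k-n,1)} .
\]
This is descending in $n$ after reindexing (order $p^{j}$ elements generate a subgroup containing those of smaller order). Alternatively, and more simply, embed the module $\oplus_{i>0}\mathbb{Z}_{p^i}$ as a submodule $\bigoplus_k \mathbb{Z}_{p^k}\subseteq\bigoplus_k P_k$. That module is not $p$-Artinian by the example above, so by \Cref{p-art}(a) it has no weakly uniserial dimension. Yet by \Cref{rem1} it would have one, being a submodule of $E$; this is a contradiction. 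I will use this second route, since it reuses the earlier example directly and avoids handling the chain by hand.

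\textbf{Step 3: conclusion.} Once the total number of summands is finite, $E\cong\bigoplus_{j=1}^{m}\mathbb{Z}_{p_j^{\infty}}$, as claimed.
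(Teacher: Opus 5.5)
Your Step 1 and the distinct-primes case of Step 2 are correct, and together they are all that the paper's own proof covers. The paper writes $M\cong\mathbb{Q}^{(\lambda)}\oplus(\oplus_{p\in P}\mathbb{Z}_{p^\infty})$, silently dropping multiplicities, and then cites \Cref{Q}, \Cref{rem1} and \Cref{1}. You rightly noticed that this leaves the case of infinitely many copies of a single $\mathbb{Z}_{p^\infty}$, but your argument for that case fails.

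The example you rely on is false under the paper's own definitions. Every non-zero subgroup of $\bigoplus_{k>0}\mathbb{Z}_{p^k}$ contains a copy of $\mathbb{Z}_p$, so any two non-zero subgroups are parallel and the group is $p$-Artinian. Hence \Cref{p-art}(a) gives no contradiction. In fact $\bigoplus_{k>0}\mathbb{Z}_{p^k}$ is monoartinian:
\begin{itemize}
\item By Kulikov, its subgroups are direct sums of cyclic groups, and any two unbounded countable such groups embed in each other.
\item Once a term of a chain is bounded by $p^e$, the finitely many invariants $\dim(p^jM_i)[p]$, $j<e$, are non-increasing in $i$. So they stabilize, and after that $M_n\rightarrowtail M_i$.
\end{itemize}
This also disposes of your first chain, whose terms are unbounded direct sums of cyclics and hence mutually embeddable.

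The gap cannot be closed. $E=\bigoplus_{\mathbb{N}}\mathbb{Z}_{p^\infty}$ is itself monoartinian, so by \Cref{1} it has weakly uniserial dimension. The proposition, read as you did (finitely many summands), is therefore false. Sketch, for a descending chain $M_1\supseteq M_2\supseteq\cdots$ in $E$:
\begin{itemize}
\item The rank of the divisible part $d(M_i)$ is non-increasing. If it is eventually $\aleph_0$, then $M_n\rightarrowtail E\cong d(M_i)\subseteq M_i$.
\item Otherwise $d(M_i)=D$ is eventually constant, and you pass to the reduced countable groups $M_i/D$, whose lengths stabilize at some $\lambda$.
\item A Kaplansky--Mackey step-by-step construction works as follows. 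Adjoin $x$ with $px$ already handled, choose $x$ proper of height $\tau$, and send it to some $y\in p^\tau H$ with $py$ prescribed and $y$ outside the image built so far. This shows a countable reduced $G$ embeds in $H$ whenever every $(p^\tau H)[p]$ with $\tau<\lambda(G)$ is infinite, which settles limit $\lambda$.
\item If $\lambda=\mu+k$ with $\mu$ a limit and $k\ge 1$, the same construction extends any embedding $p^\mu M_n\rightarrowtail p^\mu M_i$. The needed room, $\dim(p^\tau H)[p]/(p^\mu H)[p]=\infty$ for $\tau<\mu$, holds because $p^\mu H\neq 0$ and $H$ is reduced. Such embeddings of the bounded tails exist eventually by the bounded case above.
\end{itemize}
What your Steps 1 and 2 (distinct primes) do establish is the correct statement: an injective $\mathbb{Z}$-module with weakly uniserial dimension is of the form $\bigoplus_{p\in F}\mathbb{Z}_{p^\infty}^{(I_p)}$ with $F$ a finite set of primes.
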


\begin{proof}
Assume that $M$ is an  injective $\mathbb{Z}$-module with weakly uniserial dimension.  Since  $\mathbb{Z}$ is a Noetherian ring, $M$ is a direct sum of injective indecomposable  $\mathbb{Z}$-modules. But an injective
indecomposable  $\mathbb{Z}$-module is isomorphic to either ${\rm E}(\mathbb{Z}) \cong \mathbb{Q}$ or ${\rm E}(\mathbb{Z}/p\mathbb{Z}) \cong \mathbb{Z}_{p^{\infty}}$, where
$p$ is a prime number. Thus
$M \cong \mathbb{Q}^{(\lambda)} \oplus (\oplus_{p \in P} \mathbb{Z}_{p^{\infty}})$,
 where $P$ is the set of prime numbers. Now, by \Cref{Q}, \Cref{rem1} and \Cref{1} we conclude that $M$ is isomorphic to a finite direct sum of $\mathbb{Z}_{p^{\infty}}$, where $p \in P$.
\end{proof}

\begin{Def}
{\rm
We say that a module is anti weakly uniserial if its incomparable submodules are not comparable regarding embedding.
}
\end{Def}

\begin{Pro}
Let $M = M_{1} \oplus M_{2}$ be an anti weakly uniserial right $R$-module with weakly uniserial dimension. Then 
${\rm w.dim}(M) \geq {\rm w.dim}(M_{1}) +{\rm w.dim}(M_{2})$.
\end{Pro}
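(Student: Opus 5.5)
We argue by transfinite induction on $\beta={\rm w.dim}(M_{2})$, keeping $M_{1}$ fixed, and prove the stronger claim
\[
{\rm w.dim}(M_{1}\oplus N)\ \geq\ {\rm w.dim}(M_{1})+{\rm w.dim}(N)\quad\text{for every submodule } N\subseteq M_{2}.
\]
This is legitimate because the hypotheses pass to such sums. By \Cref{rem1}, $M_{1}\oplus N\subseteq M$ has weakly uniserial dimension. The anti weakly uniserial property also passes to submodules, since incomparability of submodules of $M_{1}\oplus N$ is the same as incomparability inside $M$. Sums are ordinal sums, so the target inequality is ${\rm w.dim}(M_1)+\beta\le {\rm w.dim}(M_1\oplus N)$.

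\textbf{Base case.} If $\beta=0$, then $N=0$ and the claim reads ${\rm w.dim}(M_{1})\leq {\rm w.dim}(M_{1}\oplus 0)$, which is trivial.

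\textbf{Inductive step.} Suppose $\beta\geq 1$. By \Cref{وسطی}, for every $\gamma<\beta$ there is a submodule $N_{\gamma}\subseteq N$ with ${\rm w.dim}(N_{\gamma})=\gamma$. We want $M_{1}\oplus N_{\gamma}$ to be one of the submodules appearing in the definition of $\xi_{\alpha}$ for $M_{1}\oplus N$. That is, we need
\[
M_{1}\oplus N_{\gamma}\subsetneq M_{1}\oplus N\quad\text{and}\quad M_{1}\oplus N\not\rightarrowtail M_{1}\oplus N_{\gamma}.
\]
Properness holds because ${\rm w.dim}(N_{\gamma})<{\rm w.dim}(N)$ forces $N_{\gamma}\neq N$. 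The non-embedding is exactly where the anti weakly uniserial hypothesis enters, and I expect it to be the main obstacle.

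The issue is that $M_{1}\oplus N_{\gamma}\subseteq M_{1}\oplus N$ are comparable by inclusion, so the hypothesis does not apply to them directly. I would try to use it on $N$ and $M_{1}$ (or $N$ and a complement-type submodule) to rule out embeddings. Concretely, suppose $f:M_{1}\oplus N\rightarrowtail M_{1}\oplus N_{\gamma}$. Then $f(N)$ and $N$ are isomorphic submodules. If they were incomparable, the hypothesis would be contradicted, so $f(N)$ and $N$ must be comparable by inclusion. One then hopes to push this to get $N\rightarrowtail N_{\gamma}$, which contradicts ${\rm w.dim}(N_\gamma)<{\rm w.dim}(N)$ (this uses the fact, from the proof of \Cref{1} and \Cref{rem1}, that a submodule into which the module embeds has equal dimension). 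Alternatively, I would look for a submodule of $N_{\gamma}$ that is incomparable with $N\cap f^{-1}(\cdot)$. If this step fails in general, I would first settle the case where $\beta$ is a successor ordinal and take $N_{\gamma}$ with $N\not\rightarrowtail N_{\gamma}$, as \Cref{rem1} provides.

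\textbf{Conclusion.} Once the non-embedding is established, the definition of $\xi_{\alpha}$ gives
\[
{\rm w.dim}(M_{1}\oplus N_{\gamma})<{\rm w.dim}(M_{1}\oplus N).
\]
By the induction hypothesis, ${\rm w.dim}(M_{1}\oplus N_{\gamma})\geq {\rm w.dim}(M_{1})+\gamma$. Combining the two,
\[
{\rm w.dim}(M_{1}\oplus N)\ \geq\ {\rm w.dim}(M_{1})+\gamma+1\quad\text{for all }\gamma<\beta.
\]
Taking the supremum over $\gamma<\beta$ gives ${\rm w.dim}(M_{1})+\beta$. This holds in both the successor and the limit case, because ordinal addition is continuous in the right argument. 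Setting $N=M_{2}$ yields the statement.
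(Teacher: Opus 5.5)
Your strategy matches the paper's: transfinite induction on ${\rm w.dim}(M_2)$, choosing a submodule of each smaller dimension, applying the induction hypothesis to $M_1\oplus N_\gamma$, and getting a strict drop from a non-embedding. Your single $\sup_{\gamma<\beta}\bigl({\rm w.dim}(M_1)+\gamma+1\bigr)$ argument correctly covers both the successor and the limit case.

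\textbf{The gap.} As written, the proposal leaves the decisive step unproved. You only say one ``hopes'' to get $N\rightarrowtail N_\gamma$. The missing ingredient is the modular law. Suppose $f\colon M_1\oplus N\rightarrowtail M_1\oplus N_\gamma$. As you observe, $f(N)\cong N$ and both lie in $M$, so the hypothesis forces $f(N)$ and $N$ to be comparable by inclusion. Since $N_\gamma\subseteq N\subseteq M_2$ and $M_1\cap M_2=0$,
\[
N\cap(M_1\oplus N_\gamma)=N_\gamma+(N\cap M_1)=N_\gamma\subsetneq N .
\]
Hence $N\subseteq f(N)\subseteq M_1\oplus N_\gamma$ is impossible. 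So $f(N)\subseteq N\cap(M_1\oplus N_\gamma)=N_\gamma$, i.e.\ $N\rightarrowtail N_\gamma$. This gives ${\rm w.dim}(N)\le{\rm w.dim}(N_\gamma)=\gamma<\beta$, a contradiction. With this your inductive step closes, and no fallback to the successor case is needed. The paper simply asserts $M\not\rightarrowtail M_1\oplus K$ without justification, so this argument is exactly the piece worth writing down.

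\textbf{A second point you skip.} The definition of $\xi_\alpha$ yields ${\rm w.dim}(M_1\oplus N_\gamma)<{\rm w.dim}(M_1\oplus N)$ only when ${\rm w.dim}(M_1\oplus N)>1$. In a weakly uniserial module, a proper submodule into which the module does not embed can still have dimension $1$. So you must note the following:
\begin{enumerate}
\item If $M_1=0$, the claim is trivial.
\item If $M_1\neq0\neq N$, then $M_1$ and $N$ are incomparable by inclusion. By the anti weakly uniserial hypothesis neither embeds in the other, so $M_1\oplus N$ is not weakly uniserial.
\end{enumerate}
The paper glosses over this as well when it passes from $M\not\rightarrowtail M_1$ to ${\rm w.dim}(M_1)<{\rm w.dim}(M)$.
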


\begin{proof}
Assume that ${\rm w.dim}(M_{2})=\alpha$. We use transfinite induction on $\alpha$. Not that since $M \not\rightarrowtail M_{1}$, we have ${\rm w.dim}(M_{1})< {\rm w.dim}(M)$ and so  ${\rm w.dim}(M_{1}) +1 \leq {\rm w.dim}(M)$. So the case $\alpha =1$ is clear.
 Thus, suppose $\alpha > 1$ and for every anti weakly uniserial  right $R$-module $L$ of  weakly uniserial dimension less than $\alpha$, 
 ${\rm w.dim}(M_{1}) + {\rm w.dim}(L) \leq {\rm w.dim}(M_{1} \oplus L)$. 
 There are two possibilities: $\alpha$ is a successor ordinal, or a limit ordinal. 
 
 Suppose first that $\alpha$ is a successor ordinal. Then there exists an ordinal number $\gamma$ such that  $\alpha = \gamma +1$ and so $0 < \gamma <\alpha$. Hence by \Cref{وسطی}, there exists a non-zero proper submodule $K$ of $M_{2}$ such that 
 ${\rm w.dim}(K)= \gamma<\alpha$. Since $M$  is anti weakly uniserial,  $M_{1}\oplus K$ is also anti weakly uniserial and so by the induction hypothesis we have
$${\rm w.dim}(M_{1})  + \gamma = {\rm w.dim}(M_{1}) + {\rm w.dim}(K)  \leq {\rm w.dim}(M_{1} \oplus K). $$
Since $M \not\rightarrowtail M_{1} \oplus K$, ${\rm w.dim}(M_{1} \oplus K) < {\rm w.dim}(M)$  and hence
$${\rm w.dim}(M_{1}) +\gamma +1={\rm w.dim}(M_{1})+{\rm w.dim}(K)+1 \leq {\rm w.dim}(M).$$
Therefore, ${\rm w.dim}(M_{1}) + {\rm w.dim}(M_{2}) \leq {\rm w.dim}(M)$. 

If $\alpha$ is a limit ordinal and $1 \leq \beta < \alpha$ then, by \Cref{وسطی}, there exists a non-zero proper
submodule $K$ of $M_{2}$ such that $\beta \leq {\rm w.dim}(K)$. Then by induction hypothesis
$${\rm w.dim}(M_{1}) + \beta \leq {\rm w.dim}(M_{1}) +{\rm w.dim}(K) \leq {\rm w.dim}(M_{1} \oplus K) <{\rm w.dim}(M)$$
Therefore, ${\rm w.dim}(M_{1}) + \alpha = {\rm sup} \lbrace {\rm w.dim}(M_{1}) + \beta ~|~ \beta  < \alpha \rbrace \leq {\rm w.dim}(M)$.
\end{proof}

\begin{Rem}
{\rm
The condition anti weakly uniserial  in above proposition is necessary. For example consider two $\mathbb{Z}$-modules
$M=\oplus_{1}^{\infty}\mathbb{Z}_{p}$ and $N=\mathbb{Z}_{p}$. Then $M \cong M \oplus N$, but ${\rm w.dim}(M)=1$ and so ${\rm w.dim}(M) \ngeq {\rm w.dim}(M)+{\rm w.dim}(N)$.
}
\end{Rem}

\begin{Cor}
Let $M$  be an anti weakly uniserial module  and $N$ be a submodule of it.  If  ${\rm w.dim}(N)={\rm w.dim}(M)$,  then  $N \subseteq_{e} M$.
\end{Cor}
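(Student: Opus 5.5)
We argue by contradiction and use the preceding proposition. Suppose ${\rm w.dim}(N)={\rm w.dim}(M)$ but $N$ is not essential in $M$. Then there is a non-zero submodule $K$ of $M$ with $N\cap K=0$, so $N\oplus K$ is a submodule of $M$. We may assume $M$ has weakly uniserial dimension, since the hypothesis says ${\rm w.dim}(N)$ and ${\rm w.dim}(M)$ both exist and are equal. Then $N\oplus K$ also has weakly uniserial dimension by \Cref{rem1}.

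Next we check that $N\oplus K$ is again anti weakly uniserial. This is immediate from the definition: two submodules of $N\oplus K$ that are incomparable under inclusion are also incomparable submodules of $M$. Hence neither embeds in the other. Applying the preceding proposition to $N\oplus K$ gives
$${\rm w.dim}(N\oplus K)\geq {\rm w.dim}(N)+{\rm w.dim}(K)\geq {\rm w.dim}(N)+1 > {\rm w.dim}(N),$$
because $K\neq 0$ gives ${\rm w.dim}(K)\geq 1$. On the other hand, $N\oplus K\subseteq M$, so \Cref{rem1} gives ${\rm w.dim}(N\oplus K)\leq {\rm w.dim}(M)={\rm w.dim}(N)$. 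This is a contradiction, so $N\subseteq_e M$.

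The only delicate point is the ordinal inequality ${\rm w.dim}(N)+{\rm w.dim}(K)>{\rm w.dim}(N)$. It holds because ordinal addition is strictly increasing in the right argument, which makes it the correct order for the sum produced by the proposition with $M_1=N$, $M_2=K$. The case $N=0$ is trivial: then ${\rm w.dim}(M)=0$ forces $M=0$, and $0$ is essential in $0$. As a cross-check that avoids the proposition, one could argue directly. Since $N\oplus K$ and $N$ are comparable under inclusion with $N\subsetneq N\oplus K$, the anti weakly uniserial property does not immediately forbid $N\oplus K\rightarrowtail N$. Therefore the proposition is the key tool, and the main step is verifying that its hypotheses pass to the submodule $N\oplus K$.
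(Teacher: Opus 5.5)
Your proof is correct and is the paper's argument: choose a non-zero $K$ with $N\cap K=0$, apply the preceding proposition to $N\oplus K$ to get ${\rm w.dim}(N\oplus K)\geq{\rm w.dim}(N)+{\rm w.dim}(K)>{\rm w.dim}(N)$, and contradict ${\rm w.dim}(N\oplus K)\leq{\rm w.dim}(M)={\rm w.dim}(N)$. Your extra checks are the details the paper leaves implicit: that $N\oplus K$ inherits the anti weakly uniserial property and has weakly uniserial dimension, that ordinal addition is strictly increasing in the right argument, and the case $N=0$. One small correction to your closing aside: an embedding $f\colon N\oplus K\rightarrowtail N$ is in fact ruled out directly, because $f(K)\subseteq N$ and $K$ would be non-zero, incomparable and isomorphic.
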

\begin{proof}
Assume, to the contrary, that $N$ is not an essential  submodule of $M$. Then there exists a non-zero submodule $K$  of $M$ such that $N \cap K=0$  and so $N \oplus K \subseteq M$.  Hence by above proposition,
${\rm w.dim}(N) < {\rm w.dim}(N)+{\rm w.dim}(k) \leq {\rm w.dim}(N \oplus k) \leq M$,
a contradiction.
\end{proof}

\begin{Rem}
{\rm
The converse of above corollary  is not true in general. For example....
}
\end{Rem}

\begin{Pro}
Let $M$ be a right $R$-module of finite length. Then the following statements hold:\\
\indent{{\rm(a)}} If $N$ is a submodule of $M$, then ${\rm w.dim}(M/N) \leq {\rm w.dim}(M) $.\\
\indent{{\rm(b)}} ${\rm w.dim}(M) \leq {\rm length}(M)$.
\end{Pro}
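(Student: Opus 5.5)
Throughout, $M$ has finite length. Then $M$ is artinian, so by \Cref{p-art}(b) $M$ and all its submodules and quotients have weakly uniserial dimension. Also, no module of finite length embeds in a proper submodule of itself, since an embedding preserves length. Hence for $M$ of finite length the condition ``$M\not\rightarrowtail N$'' in the definition is automatic for every $N\subsetneq M$. The plan is to record this recursion first: for $M\neq 0$ of finite length, either $M$ is weakly uniserial and ${\rm w.dim}(M)=1$, or
$$ {\rm w.dim}(M)=1+\max\{{\rm w.dim}(N)~|~N\subsetneq M\}. $$
The inequality ``$\leq$'' is \Cref{rem1}. For ``$\geq$'', note that if $M\in\xi_\alpha$ with $\alpha>1$, then every proper $N$ lies in some $\xi_\beta$ with $\beta<\alpha$. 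The maximum exists because all these values are finite ordinals, which follows by induction on length.

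For (b), I will induct on ${\rm length}(M)$. If $M$ is weakly uniserial (in particular if $M$ is simple), then ${\rm w.dim}(M)=1\leq{\rm length}(M)$. Otherwise every proper $N$ has ${\rm length}(N)\leq{\rm length}(M)-1$, so by induction ${\rm w.dim}(N)\leq{\rm length}(M)-1$. The recursion then gives ${\rm w.dim}(M)\leq{\rm length}(M)$.

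For (a), I will again induct on ${\rm length}(M)$, with $N\subseteq M$ fixed. If $M/N=0$ or $M/N$ is weakly uniserial, then ${\rm w.dim}(M/N)\leq 1\leq{\rm w.dim}(M)$. Otherwise the recursion gives a proper submodule $K/N\subsetneq M/N$ with ${\rm w.dim}(M/N)=1+{\rm w.dim}(K/N)$. Here $K\subsetneq M$ has smaller length, so the induction hypothesis gives ${\rm w.dim}(K/N)\leq{\rm w.dim}(K)$. If $M$ is not weakly uniserial, the recursion gives ${\rm w.dim}(K)<{\rm w.dim}(M)$, so ${\rm w.dim}(M/N)\leq{\rm w.dim}(K)+1\leq{\rm w.dim}(M)$, as required.

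The main obstacle is the remaining case: $M$ is weakly uniserial but $M/N$ is not. Then ${\rm w.dim}(M)=1$, and the argument above only yields ${\rm w.dim}(M/N)\leq 2$. So I must show that a quotient of a weakly uniserial module of finite length is again weakly uniserial. I would try the following. Take $A/N,B/N\subseteq M/N$ and suppose $A\rightarrowtail B$, say with ${\rm length}(A)\leq{\rm length}(B)$. Then try to use the weak uniseriality of $M$ restricted to submodules containing $N$ to produce an embedding $A/N\rightarrowtail B/N$, perhaps by first showing that such an $M$ is uniform, so that $N$ contains the unique simple socle and the embedding can be adjusted to carry $N$ into $N$. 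If this fails in general, I would look for a counterexample there. A first test is a module with simple socle $S$ and $M/S\cong T\oplus U$ for non-isomorphic simples $T,U$; however, such an $M$ is already not weakly uniserial, so a genuine counterexample would need more structure.
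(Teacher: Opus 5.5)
You flagged exactly the right spot, but this gap cannot be filled. The case ``$M$ weakly uniserial, $M/N$ not weakly uniserial'' actually occurs, so statement (a) is false as stated. The paper's proof has the same hole. It asserts ${\rm w.dim}(K/N)\le{\rm w.dim}(K)<{\rm w.dim}(M)$ for every proper $K$, and the strict inequality fails whenever ${\rm w.dim}(M)=1$. Your proposed repair via uniformity fails immediately anyway: for $S$ simple, $S\oplus S$ is weakly uniserial of finite length but not uniform. Your treatment of the case ``$M$ not weakly uniserial'' is also not unconditional. It applies the induction hypothesis ${\rm w.dim}(K/N)\le{\rm w.dim}(K)$ to a proper submodule $K$ that may itself be weakly uniserial, which is exactly the bad case. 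So your proposal genuinely proves (b), correctly and by the same induction as the paper. For (a) it only reduces the statement to the claim that quotients of finite-length weakly uniserial modules are weakly uniserial, and that claim is false.

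Here is a counterexample, built on your own criterion. A finite-length module is weakly uniserial iff any two submodules of equal length are isomorphic: if ${\rm length}(X)\le{\rm length}(Y)$, a composition series of $Y$ gives $Y'\subseteq Y$ with ${\rm length}(Y')={\rm length}(X)$, so $X\cong Y'$.

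Let $R=\mathbb{R}[x,y]/(x,y)^2$ and $J(s,t)=(-t,s)$ on $\mathbb{R}^2$. Let $M=\mathbb{R}^2\times\mathbb{R}^2$ with $x(u,v)=(0,u)$ and $y(u,v)=(0,Ju)$. Since $J$ has no real eigenvector, any submodule containing some $(u,v)$ with $u\neq 0$ contains $0\times\mathbb{R}^2$. Hence the submodules are exactly the $0\times W$ and the $U\times\mathbb{R}^2$ with $U\neq 0$. Checking by length:
\begin{itemize}
\item those of length $1$ are all $\cong R/(x,y)$;
\item the only one of length $2$ is $0\times\mathbb{R}^2$;
\item those of length $3$ are the $R(u,0)=\mathbb{R}u\times\mathbb{R}^2$, which have zero annihilator and so are $\cong R$.
\end{itemize}
Thus $M$ is weakly uniserial and ${\rm w.dim}(M)=1$.

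Now put $N=0\times\mathbb{R}e_1$. We have $x(e_1,0)=(0,e_1)\in N$ and $y(e_1,0)=(0,e_2)\notin N$. Likewise $y(e_2,0)=(0,-e_1)\in N$ and $x(e_2,0)=(0,e_2)\notin N$. Hence $R(e_1,0)/N\cong R/(x)$ and $R(e_2,0)/N\cong R/(y)$. These are two submodules of $M/N$ of length $2$ with different annihilators, so neither embeds in the other. Therefore $M/N$ (in fact $M/N\cong {\rm E}(R/(x,y))$) is not weakly uniserial, and ${\rm w.dim}(M/N)\ge 2>1={\rm w.dim}(M)$.
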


\begin{proof}
(a). The proof is by induction on $n$, where ${\rm length}(M) = n$. The case $n = 1$ is clear. Now, let $n > 1$ and assume that the assertion is true for all modules with length less than $n$. If $N$ is a non-zero submodule of $M$, then the
${\rm length}(M/N) < n$. Thus for every proper submodule $K/N$ of $M/N$, by induction,
${\rm w.dim}(K/N) \leq {\rm w.dim}(K) < {\rm w.dim}(M)$. Now, \Cref{rem1} implies that
${\rm w.dim}(M/N) \leq {\rm w.dim}(M)$.

(b). The proof is by induction on ${\rm length}(M) = n$. The case $n = 1$ is clear. Now if
$n > 1$ and $K$ is a proper submodule of $M$, then by assumption,
$ {\rm w.dim}(K) \leq {\rm length}(K) < {\rm length}(M)$. Thus by \Cref{rem1}, ${\rm w.dim}(M) \leq {\rm length}(M)$.
\end{proof}
\begin{Rem}
{\rm
The condition finite length  in above proposition is necessary. for instance, consider   $\mathbb{Z}$-modules  $M=\mathbb{Z}$ and $N=6\mathbb{Z}$. Then ${\rm w.dim}(M/N)=2$ and ${\rm w.dim}(M)=1$ and so ${\rm w.dim}(M/N) \nleq {\rm w.dim}(M)$.
}
\end{Rem}

\begin{Pro}\label{E(s)}
Let  $S$ be a  simple right  $R$-module. If  $\oplus_{1}^{\infty} {\rm E}(S)$ has weakly uniserial dimension, then $S$ is   injective or  singular.
\end{Pro}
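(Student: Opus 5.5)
The plan is to argue by contraposition using \Cref{1}. Put $M=\oplus_{1}^{\infty}{\rm E}(S)$ and assume that $S$ is neither injective nor singular. The goal is to build a descending chain $M_{1}\supseteq M_{2}\supseteq\cdots$ of submodules of $M$ such that for every $n$ there is some $i\geq n$ with $M_{n}\not\rightarrowtail M_{i}$. Then $M$ is not monoartinian, so by \Cref{1} it has no weakly uniserial dimension.

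\textbf{Consequences of the hypotheses.} First, $S\subsetneq {\rm E}(S)$ is a proper essential extension, because $S$ is not injective. Second, write $S\cong R/\mathfrak m$ with $\mathfrak m$ a maximal right ideal. Since $S$ is simple and not singular, $\Z(S)=0$, so $\mathfrak m$ is not essential in $R_R$. Hence there is a nonzero right ideal $I$ with $I\cap\mathfrak m=0$. Then $I\rightarrowtail R/\mathfrak m$, so $I\cong S$ is a minimal right ideal. Moreover $\Z({\rm E}(S))\cap S=0$, and $S$ is essential in ${\rm E}(S)$, so $\Z({\rm E}(S))=0$. Thus ${\rm E}(S)$ is a nonsingular uniform injective module.

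\textbf{Invariant for non-embedding.} Let $X$ be a submodule of $M$ and $f\colon X\to Y$ a monomorphism into a submodule $Y$ of $M$. If ${\rm E}(S)\subseteq X$ is a summand-copy, then its image $f({\rm E}(S))$ is injective, hence a direct summand of $Y$. So the number of copies of ${\rm E}(S)$ (equivalently, of uniform injective summands isomorphic to ${\rm E}(S)$) that split off is an invariant that embeddings cannot decrease. The nonsingularity of ${\rm E}(S)$ should also control what a copy of $S$ inside such a summand can be sent to; this is where $S\cong I$ is used. The chain is chosen as
$$M_{n}=\Big(\oplus_{i\leq n}{\rm E}(S)\Big)\oplus\Big(\oplus_{i>n}S\Big)\quad\text{or a suitable variant,}$$
arranged so that the number of ${\rm E}(S)$-summands strictly drops along the chain, while the complementary parts are kept inside the socle copies of $S$. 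The fact that ${\rm E}(S)\neq S$ is exactly what makes the inclusion from each term into the next genuinely non-embeddable. The counting invariant then shows $M_{n}\not\rightarrowtail M_{n+1}$ for all $n$.

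\textbf{Main obstacle.} The chain written above is ascending rather than descending. Any naive descending chain obtained by trading ${\rm E}(S)$-summands for $S$-summands tends to be isomorphic term by term, since $S^{(\infty)}\oplus{\rm E}(S)^{(\infty)}$ absorbs shifts. So the key step is to find a genuinely descending chain on which the invariant strictly decreases.

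What I would try first: take the first summand ${\rm E}(S)$ and produce, inside it or inside ${\rm E}(S)\oplus{\rm E}(S)$, strictly descending submodules $N_{1}\supsetneq N_{2}\supsetneq\cdots$ that all contain an essential copy of $S$ but have decreasing "injective part". For instance, use diagonal-type submodules $\{(x,\varphi(x))\}$ of ${\rm E}(S)^{2}$ coming from endomorphisms $\varphi$ of ${\rm E}(S)$ that fix $S$. Then I would compare these submodules via \Cref{injective duo}-style arguments on the nonsingular module ${\rm E}(S)$. If this fails, the fallback is to exploit the projectivity-type behaviour of the minimal right ideal $I\cong S$ (a nonsingular simple that is a right ideal), to show that any monomorphism between the chosen submodules must restrict to an isomorphism on the ${\rm E}(S)$-parts, which yields the required non-embedding.
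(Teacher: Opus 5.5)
\textbf{The gap.} You never construct the descending chain that the contrapositive needs. The chain you display, $M_n=(\oplus_{i\le n}{\rm E}(S))\oplus(\oplus_{i>n}S)$, is ascending, as you note yourself. The proposed repairs (diagonal submodules of ${\rm E}(S)^2$, ``projectivity-type behaviour'' of $I$) are only suggestions and are not carried out.

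The mechanism you intend cannot work even in principle. Consider the number of split-off copies of ${\rm E}(S)$, or any cardinal-valued invariant that does not increase under monomorphisms. Along a descending chain it is non-increasing, because inclusions are monomorphisms. Since cardinals are well-ordered, it is eventually constant. So it can never certify $M_n\not\rightarrowtail M_{n+1}$ for all $n$. Your preliminary observations are correct: $S$ is isomorphic to a minimal right ideal, hence projective, and ${\rm E}(S)$ is nonsingular. But they do not lead to a contradiction.

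\textbf{Why no repair is possible.} The statement itself is false, so neither your route nor any other can be completed.

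Let $R$ be the ring of upper triangular $2\times2$ matrices over a field $k$, with matrix units $e_{ij}$, and let $S=e_{22}R$.
\begin{itemize}
\item $S$ is a direct summand of $R_R$, hence projective.
\item $S$ is nonsingular: the annihilator $e_{11}R$ of $e_{22}$ meets $e_{22}R$ trivially, so it is not essential.
\item $S\cong e_{12}R$, which is the unique nonzero proper submodule of $e_{11}R$. Hence $S$ is not injective, and ${\rm E}(S)=e_{11}R$; this module is injective, being isomorphic to ${\rm Hom}_k(Re_{22},k)$.
\end{itemize}
A right $R$-module $X$ is the same thing as the $k$-linear map $Xe_{11}\to Xe_{22}$, $x\mapsto xe_{12}$. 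For $M=\oplus_{1}^{\infty}{\rm E}(S)$ this map is bijective. From this one checks that, for submodules $X,Y$ of $M$, $X\rightarrowtail Y$ if and only if $\dim_k Xe_{11}\le\dim_k Ye_{11}$ and $\dim_k Xe_{22}\le\dim_k Ye_{22}$. Along a descending chain of submodules of $M$, both dimensions are non-increasing cardinals, hence eventually constant. So $M$ is monoartinian and has weakly uniserial dimension by \Cref{1}, yet $S$ is neither injective nor singular.

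\textbf{The paper's own argument fails too.} It uses the chain $S^{(n)}\oplus(\oplus_{n+1}^{\infty}{\rm E}(S))$. It then claims that composing the monomorphism $f$ with a ``suitable canonical projection'' gives a nonzero $g\colon{\rm E}(S)\to S$; after that, $\ker g=0$ would give injectivity and $\ker g\neq0$ (essential) would give singularity. But this chain is stationary up to isomorphism, since $f$ can simply shift the ${\rm E}(S)$-summands. In the example, ${\rm Hom}_R({\rm E}(S),S)\cong e_{22}Re_{11}=0$, so no such $g$ exists.
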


\begin{proof}
Consider the following descending chain  of submodules of $\oplus_{1}^{\infty} {\rm E}(S)$:
$$
S\oplus(\oplus_{2}^{\infty} {\rm E}(S)) \supseteq S \oplus S \oplus (\oplus_{3}^{\infty} {\rm E}(S)) \supseteq \cdots. 
$$
Then, by \Cref{1},  there exists $n \geq 1$ such that  
$S^{(n)}\oplus(\oplus_{n+1}^{\infty} {\rm E}(S)) \overset{f}\rightarrowtail S^{(k)}\oplus(\oplus_{k+1}^{\infty} {\rm E}(S)) $, for any $k\geq n$. Thus we have the following nonzero  $R$-homomorphism:
$${\rm E}(S) \overset{i}\hookrightarrow S^{(n)}\oplus(\oplus_{n+1}^{\infty} {\rm E}(S))  \overset{f}\rightarrowtail S^{(k)}\oplus(\oplus_{k+1}^{\infty} {\rm E}(S)) \overset{\pi}\rightarrow S,$$ 
where $\pi$ is a suitable canonical projection. By set $g=\pi f i$ we have $\frac{{\rm E}(S)}{{\rm ker}(g)} \cong S$. If ${\rm ker}(g)=0$, then ${\rm E}(S) \cong S$  and if, ${\rm ker}(g) \neq 0$ we conclude that $S$  is a singular module.
\end{proof}

\begin{Pro}\label{semisimple}
Let $M$ be a semisimple right $R$-module. Then $M$ has weakly uniserial dimension, if and only if $M$ is a finite direct sum of homogeneous semisimple $R$-modules.
\end{Pro}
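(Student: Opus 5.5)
By \Cref{1}, $M$ has weakly uniserial dimension if and only if $M$ is monoartinian, so I will prove: a semisimple module $M$ is monoartinian if and only if it has only finitely many homogeneous components. Write $M=\bigoplus_{i\in I} S_i^{(\lambda_i)}$, where the $S_i$ are pairwise non-isomorphic simple modules and $\lambda_i\neq 0$. The basic tool is the following criterion. For semisimple modules $A=\bigoplus_i S_i^{(a_i)}$ and $B=\bigoplus_i S_i^{(b_i)}$ we have $A\rightarrowtail B$ if and only if $a_i\le b_i$ (as cardinals) for every $i$. Indeed, a monomorphism maps the $S_i$-homogeneous component of $A$ into the $S_i$-homogeneous component of $B$. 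It also follows that every submodule of a homogeneous semisimple module $S^{(\lambda)}$ is $S^{(\mu)}$ for some cardinal $\mu\le\lambda$.

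\textbf{($\Rightarrow$).} Suppose $I$ is infinite and choose distinct indices $i_1,i_2,\ldots$ in $I$. Set $M_k=\bigoplus_{i\neq i_1,\ldots,i_k}S_i^{(\lambda_i)}$. This is a descending chain, and $M_{n+1}$ has zero $S_{i_{n+1}}$-component while $M_n$ does not. By the criterion, $M_n\not\rightarrowtail M_{n+1}$ for every $n$. So $M$ is not monoartinian, and by \Cref{1} it has no weakly uniserial dimension.

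\textbf{($\Leftarrow$).} Now let $I=\{1,\ldots,m\}$ be finite. Each homogeneous component $S^{(\lambda)}$ is weakly uniserial: its submodules are $S^{(\mu)}$, and these are totally ordered under embedding because cardinals are comparable. Take a descending chain $M\supseteq M_1\supseteq M_2\supseteq\cdots$. Each $M_k$ is semisimple, so $M_k\cong\bigoplus_{i=1}^m S_i^{(c_{k,i})}$. Since $M_{k+1}\subseteq M_k$, the criterion gives $c_{k+1,i}\le c_{k,i}$ for each $i$. For each fixed $i$ the sequence $(c_{k,i})_k$ is therefore a non-increasing sequence of cardinals. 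Because cardinals are well-ordered, such a sequence is eventually constant. With only finitely many $i$, we can choose $n$ beyond all the stabilization points. Then $M_n\cong M_k$ for all $k\ge n$, so in particular $M_n\rightarrowtail M_k$. Hence $M$ is monoartinian, and by \Cref{1} it has weakly uniserial dimension.

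The step needing the most care is the embedding criterion, specifically that a monomorphism between semisimple modules preserves homogeneous components, together with the multiplicity comparison. This holds because $\mathrm{Hom}(S_i,S_j)=0$ for $i\neq j$, so the image of the $S_i$-component of $A$ lies in the $S_i$-component of $B$; an embedding $S_i^{(a)}\rightarrowtail S_i^{(b)}$ then forces $a\le b$ by comparing lengths (or cardinalities of bases over the division ring $\mathrm{End}(S_i)$, computed via the socle). Everything else is routine bookkeeping.
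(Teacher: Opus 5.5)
Your proof is correct. For $(\Rightarrow)$ you do essentially what the paper does: you delete one isomorphism type at a time to get a descending chain that never stabilizes up to embedding, and then invoke Theorem~\ref{1}. The paper first passes to a summand $\bigoplus_{i\ge 1}M_i$ with one simple of each of countably many types. You delete whole homogeneous components inside $M$ itself, and your embedding criterion supplies the justification of $M_n\not\rightarrowtail M_{n+1}$ that the paper leaves implicit.

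For $(\Leftarrow)$ the routes genuinely differ. The paper does not use Theorem~\ref{1} there. It argues by minimal counterexample directly from the recursive definition, via Remark~\ref{rem1}. It chooses the lexicographically least tuple of multiplicities $(c_1,\dots,c_m)$ for which the corresponding module $K$ has no weakly uniserial dimension. It then takes $L\subseteq K$ with $K\not\rightarrowtail L$ and no weakly uniserial dimension, and notes that the multiplicity tuple of $L$ is lexicographically smaller. In effect this is a well-founded induction on $m$-tuples of cardinals. You instead verify the chain condition: along a descending chain each multiplicity is a non-increasing sequence of cardinals, hence eventually constant, and finiteness of $m$ gives a common stabilization index.

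Your argument is more transparent and isolates exactly where finiteness of the number of homogeneous components is used. It also proves more: descending chains in such $M$ stabilize up to isomorphism, not merely up to embedding. The paper's argument is independent of Theorem~\ref{1} and its sequential construction. However, it tacitly relies on the same multiplicity comparison that you make explicit (``$|K_i|\le c_i$'' and ``$K\not\rightarrowtail L$ forces some $|K_i|<c_i$''). Both proofs ultimately rest on the well-ordering of cardinals.
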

\begin{proof}
($\Rightarrow$)
If $M$ is not a finite direct sum of homogeneous semisimple $R$-modules, then there exists a summand $N$ of $M$ such that $N=\oplus_{i=1}^{\infty}M_{i}$, 
where the $M_{i}$'s are pairwise non-isomorphic simple right R-modules. Now consider thea scending chain
$\oplus_{i=1}^{\infty}M_{i} \supseteq \oplus_{i=2}^{\infty}M_{i} \supseteq \cdots,$
of submodules of $N$. Using \Cref{1}, we  see that $N$ doesn’t have weakly uniserial dimension. Therefore, $M$ doesn’t have weakly uniserial dimension, a contradiction.

($\Leftarrow$)
Let $M$ be a finite direct sum of homogeneous semisimple $R$-modules. Then there exist pairwise non-isomorphic simple $R$-modules $S_{1}, \ldots, S_{m}$ and index sets $I_{1}, \ldots, I_{m}$ such that 
$M=(\oplus_{I_{1}}S_{1}) \oplus \cdots \oplus (\oplus_{I_{m}}S_{m})$.
  Assume to the contrary that $M$ doesn’t have weakly uniserial dimension. Let  $A_{1}$ be the set of cardinal numbers $c$ such that  there exist index sets $J_{1}, \ldots, J_{m}$ such that 
  $|J_{1}|=c$ and $N=(\oplus_{J_{1}}S_{1}) \oplus \cdots \oplus (\oplus_{J_{m}}S_{m})$
doesn’t have weakly uniserial dimension. Then $A_{1}$ is non-empty and so has the minimum element $c_{1}$. We can inductively construct a sequence of cardinal numbers $c_{1}, \ldots, c_{m}$ in terms of each $c_{i}$ is minimum element in the set $A_{i}$ of cardinal numbers $c$ such that there exist index sets $J_{1}, \ldots, J_{m}$ with the property that 
$|J_{k}|= c_{k}$ for each $1 \leq k \leq i-1$, $|J_{i}|=c$ and
$N=(\oplus_{J_{1}}S_{1}) \oplus \cdots \oplus (\oplus_{J_{m}}S_{m})$
doesn’t have weakly uniserial dimension. Set
$K=(\oplus_{J_{1}}S_{1}) \oplus \cdots \oplus (\oplus_{J_{m}}S_{m})$
where for each $i$, $|J_{i}|=c_{i}$. Since $K$ doesn’t have weakly uniserial dimension, there exists a submodule $L$ of $K$ such that  $K \not\rightarrowtail L$
and $L$ doesn’t have weakly uniserial dimension. Because $K$ is semisimple, there exist index sets $K_{1}, \ldots, K_{m}$, where $|K_{i}| \leq c_{i}$ and 
$L\cong(\oplus_{K_{1}}S_{1}) \oplus \cdots \oplus (\oplus_{K_{m}}S_{m})$.
 Since  $K \not\rightarrowtail L$,  there exist $i$ such that $|K_{i}| < c_{i}$, a contradiction to the choice of $c_{i}$. 
\end{proof}

\begin{Pro}
Let $M$  be a right  $R$-module with weakly uniserial dimension such that it is not embedded in any of its submodules. Then $M$ is a direct sum of weakly uniserial and indecomposable modules.
\end{Pro}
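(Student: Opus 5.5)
I would prove the statement by transfinite induction on $\alpha={\rm w.dim}(M)$. I read the conclusion as: $M$ is a finite direct sum of modules each of which is weakly uniserial or indecomposable.

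The key observation is that the hypothesis passes to direct summands. Suppose $M=A\oplus B$ and $A$ embeds in a proper submodule $A'\subsetneq A$ via some monomorphism. Combining it with the identity on $B$ gives a monomorphism $M\rightarrowtail A'\oplus B$. Since $A'\oplus B\subsetneq M$, this contradicts the hypothesis. So $A$, and symmetrically $B$, is not embedded in any of its proper submodules. By \Cref{rem1}, $A$ and $B$ have weakly uniserial dimension, so they satisfy both hypotheses of the proposition.

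Base case, $\alpha=1$: here $M$ is weakly uniserial and there is nothing to prove. This also covers every weakly uniserial $M$ at any stage.

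Inductive step: let $\alpha>1$, so $M$ is not weakly uniserial. If $M$ is indecomposable we are done. Otherwise write $M=A\oplus B$ with $A,B$ nonzero proper submodules. Since $A\subsetneq M$, the hypothesis gives $M\not\rightarrowtail A$, and likewise $M\not\rightarrowtail B$. By the definition of $\xi_\alpha$, $A$ and $B$ lie in $\bigcup_{\beta<\alpha}\xi_\beta$. Hence ${\rm w.dim}(A)<\alpha$ and ${\rm w.dim}(B)<\alpha$. By the previous paragraph the induction hypothesis applies to both. Each is therefore a finite direct sum of weakly uniserial or indecomposable modules, and so is $M=A\oplus B$.

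The main point to watch is that the decomposition stays finite and the induction does not loop. This is secured by the strict drop ${\rm w.dim}(A),{\rm w.dim}(B)<{\rm w.dim}(M)$ and the well-ordering of ordinals. The hypothesis that $M$ does not embed in its proper submodules is used precisely to force this strict drop (compare \Cref{rem1}).
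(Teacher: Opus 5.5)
Your proof is correct and follows the paper's own argument: transfinite induction on ${\rm w.dim}(M)$, writing a decomposable $M$ as $A\oplus B$, using $M\not\rightarrowtail A$ and $M\not\rightarrowtail B$ to get ${\rm w.dim}(A),{\rm w.dim}(B)<{\rm w.dim}(M)$, and applying the induction hypothesis to both summands. Beyond the paper, you prove that the non-embedding hypothesis passes to direct summands (via the monomorphism $f\oplus{\rm id}_B$), which the paper only calls easy to see, and you make explicit that the decomposition is finite with each summand weakly uniserial or indecomposable.
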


\begin{proof}
The proof is by transfinite induction on  ${\rm w.dim}(M)=\alpha$.  The case $\alpha = 1$ is clear. Now, let $\alpha > 1$ and assume that the assertion is true for every right  $R$-module $N$ with weakly uniserial dimension less than $\alpha$ that it is not  embedded in any of its submodules. If  $M$ is indecomposable, then we are done. So assume that there exist two non-zero proper submodules $M_{1}$ and $M_{2}$  of $M$  such that  $M=M_{1} \oplus M_{2}$. Since 
$M \not\rightarrowtail M_{i}$,  we have  ${\rm w.dim}(M_{i})<\alpha$ and it is easy to see that $M_{i}$ is not embedded in any of its submodules, for  $i \in \lbrace 1, 2\rbrace$. Therefore, by induction hypothesis, $M$ is a direct sum of weakly uniserial and indecomposable modules.
\end{proof}

\begin{Lem}\label{first every}
Let $R$ be a ring such that every right $R$-module has weakly uniserial dimension. Then the following statements hold:\\
\indent{{\rm(a)}} $R$ has finitely many prime ideal.\\
\indent{{\rm(b)}}  $R$ has finitely many maximal right  ideal.\\
\indent{{\rm(c)}} If $R$ is right noetherian  with nil jacobson radical, then $R$ is a right artinian ring.\\
\indent{{\rm(d)}} If $R$ has only one simple right $R$-module {\rm (}up to isomorphism{\rm )}, then $R$ is a right $V$-ring \indent\indent  or  ${\rm soc}^{2}(R_{R})=0$.
\end{Lem}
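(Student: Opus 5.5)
The strategy is to apply the hypothesis to cleverly chosen infinite direct sums and then use \Cref{1} together with \Cref{semisimple} and \Cref{E(s)}. In each part, finiteness is forced by showing that some semisimple module, or a module of the form $\oplus_{1}^{\infty}{\rm E}(S)$, would otherwise fail to be monoartinian.

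For (b), suppose $R$ has infinitely many maximal right ideals $\{\mathfrak m_i\}_{i\ge 1}$. I would first show that infinitely many of the simple modules $R/\mathfrak m_i$ are pairwise non-isomorphic. Consider the semisimple module $M=\oplus_{i} R/\mathfrak m_i$, choosing one representative from each isomorphism class. By \Cref{semisimple}, $M$ has weakly uniserial dimension only if there are finitely many isomorphism classes of simple modules, so the set of isoclasses $\{S_1,\dots,S_m\}$ is finite. It remains to bound the number of maximal right ideals $\mathfrak m$ with $R/\mathfrak m\cong S_j$. These ideals correspond to kernels of nonzero maps $R\to S_j$, and these maps are parametrised by elements of $S_j$. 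This alone need not give finiteness; for example, over a matrix ring there are infinitely many such ideals. I therefore expect (b) to need a hidden assumption, or to be read as ``finitely many maximal right ideals up to the isomorphism type of $R/\mathfrak m$.'' My fallback is to prove the isoclass version and, where possible, reduce to it. Making (b) literally true is the main obstacle, and the first thing I would try is to test the statement against $R=M_2(k)$.

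For (a), I would show that distinct maximal two-sided ideals $P$ give non-isomorphic simple right modules, since the annihilator of $R/\mathfrak m$ for $\mathfrak m\supseteq P$ is primitive and contains $P$. Hence there are finitely many primitive ideals. For a prime $P$, consider the $R/P$-modules. The module $\oplus_{P}(R/P)$ taken over infinitely many distinct primes $P_1,P_2,\dots$ should fail to be monoartinian along the chain of tails $\oplus_{i\ge n}R/P_i$. An embedding $\oplus_{i\ge n}R/P_i\rightarrowtail\oplus_{i\ge n+1}R/P_i$ would send $R/P_n$ into a sum of modules with annihilators different from $P_n$. Choosing $P_n$ minimal among the remaining primes, which after passing to a subsequence is possible by a Ramsey-type argument, I would derive a contradiction from comparing annihilators of nonzero submodules of $R/P_n$, which are $P_n$ because $R/P_n$ is prime.

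For (c), with $R$ right noetherian, I would use the chain $R\supseteq J\supseteq J^2\supseteq\cdots$ or a chain of powers $aR\supseteq a^2R\supseteq\cdots$. Monoartinian gives $a^nR\rightarrowtail a^kR$ for all $k\ge n$. Since $J$ is nil, $a^kR=0$ eventually, which forces $a^nR=0$. I would then combine this with the fact that $R/J$ is semiprime noetherian with finitely many maximal ideals by (a), hence semisimple, and that nil ideals in right noetherian rings are nilpotent. This makes $J$ nilpotent, and by Hopkins--Levitzki $R$ is right artinian.

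For (d), let $S$ be the unique simple module. \Cref{E(s)} gives that $S$ is injective or singular. If $S$ is injective, every simple module is injective, so $R$ is a right $V$-ring. If $S$ is singular, every simple module is singular, so ${\rm Soc}(R_R)\cap\Z(R_R)$ is essentially all of the socle. Using the standard fact that a singular simple right module $S$ satisfies $S\cdot{\rm Soc}(R_R)=0$, I would conclude ${\rm soc}(R_R)^2=0$, which is the intended reading of ${\rm soc}^2(R_R)=0$.
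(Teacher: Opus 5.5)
Your diagnosis of (b) is correct, and it is exactly where the paper's proof breaks. The paper applies \Cref{semisimple} to $\oplus_{i}R/\mathfrak m_i$ for distinct maximal right ideals $\mathfrak m_i$, tacitly assuming the $R/\mathfrak m_i$ are pairwise non-isomorphic. Take $R=M_2(F)$ with $F$ an infinite field. Every right module is a direct sum of copies of the unique simple module $S=F^2$ (row vectors), hence weakly uniserial. Yet the maximal right ideals $\{A\in R : vA=0\}$, $0\neq v\in S$, are indexed by the infinitely many lines $Fv$. So (b) is false as stated. The argument proves only your isoclass version, and (b) itself when $R$ is commutative, where ${\rm ann}(R/\mathfrak m)=\mathfrak m$.

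In (a) you use the paper's chain of tails. You also supply the ingredient the paper omits: every nonzero submodule of $R/P$ has annihilator $P$. However, the Ramsey/minimality step is unnecessary and not always available. If the $P_i$ form a strictly descending chain, no subsequence has each term minimal among the later ones. Argue directly instead. Restricting an embedding $\oplus_{i\geq n}R/P_i\rightarrowtail\oplus_{i\geq k}R/P_i$ ($k>n$) to the summand $R/P_n$ gives an image $xR$, with $x$ supported on a finite index set $T$. Then $\prod_{i\in T}P_i\subseteq\bigcap_{i\in T}P_i={\rm ann}(xR)=P_n$. Primeness gives $P_i\subseteq P_n\subseteq P_i$ for some $i\in T$, contradicting distinctness. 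The primitive-ideal detour is not needed.

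The genuine gap is in (c). The chain $aR\supseteq a^2R\supseteq\cdots$ yields nothing: each $a\in J$ is nilpotent anyway, and nilpotency of $J$ comes from Levitzki's theorem, not from the monoartinian condition. The decisive step is the claim that $R/J$ is semiprime right noetherian with finitely many maximal ideals, hence semisimple. This fails for noncommutative rings. The Weyl algebra $A_1(\mathbb{C})$ is a simple noetherian domain, so its Jacobson radical is $0$ and $0$ is its only prime ideal, yet it is not artinian.

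What (c) needs is that $R/J$ is semisimple, i.e.\ that $R$ is semilocal. The paper gets this only from (b), which fails, so the paper's proof of (c) is not valid for noncommutative rings either. Your argument is fine when $R$ is commutative: the finitely many maximal ideals give $R/J\cong\prod_i R/\mathfrak m_i$ by the Chinese remainder theorem, $J$ is nilpotent by Levitzki, and Hopkins--Levitzki finishes. In general you would need a further argument deriving semisimplicity of $R/J$ from the hypothesis that all modules have weakly uniserial dimension. Neither your proposal nor the paper provides one.

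Your (d) is the paper's argument: \Cref{E(s)}, then ${\rm Soc}(R_R)\subseteq\Z(R_R)$ and $\Z(R_R)\,{\rm Soc}(R_R)=0$. Your reading ${\rm Soc}(R_R)^2=0$ is the sensible one. Be aware, though, that the proof of \Cref{E(s)} is defective. The chain used there always stabilizes up to embedding: send the $n$ copies of $S$ into $n$ of the $k$ copies and shift the ${\rm E}(S)$ summands. So $\pi f i$ can be $0$. In fact, over the ring of upper triangular $2\times 2$ matrices over a field, $S=e_{22}R$ is simple, projective and not injective, yet $\oplus_1^\infty{\rm E}(S)=\oplus_1^\infty e_{11}R$ is monoartinian.

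Part (d) nevertheless holds with no dimension hypothesis at all, because a simple module is singular or projective. The singular case is as you say. If the unique simple $S$ is projective and ${\rm Soc}(R_R)\neq R$, take a maximal right ideal $\mathfrak m\supseteq{\rm Soc}(R_R)$. It would be a direct summand whose simple complement lies in ${\rm Soc}(R_R)\subseteq\mathfrak m$, which is absurd. Hence $R$ is semisimple, in particular a right $V$-ring.
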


\begin{proof}
(a). Assume to the contrary that $R$ has infinitely many prime ideal. Then, by hypothesis, the right $R$-module $M=\oplus_{P} R/p_{i}$ has weakly uniserial dimension, where $P={\rm spec}(R)$. But 
$$\oplus_{P} R/p_{i} \supseteq \oplus_{P \setminus \lbrace p_{1} \rbrace} R/p_{i} \supseteq  \oplus_{P \setminus \lbrace p_{1}, p_{2} \rbrace} R/p_{i} \supseteq  \cdots,$$ is a descending chain of submodules of $M$ that 
$ \oplus_{P \setminus \lbrace p_{1}, \ldots, p_{n} \rbrace} R/p_{i} \not\rightarrowtail  \oplus_{P \setminus \lbrace p_{1}, \ldots, p_{k} \rbrace} R/p_{i}$  for any $k \geq n$, a contradiction.

(b). If $R$ has infinitely many maximal right ideal,  then by hypothesis the right $R$-module $M=\oplus_{i=1}^{\infty} R/m_{i}$ has weakly uniserial dimension, where $m_{i}$'s are distinct maximal right ideals of $R$. But
by \Cref{semisimple}, $M$ does not have weakly uniserial dimension, a contradiction. Therefore,  $R$ has finitely many maximal right  ideal.

(c). By part (b), $R$ is a semilocal ring. Then \cite[Exercise 20.6]{Lam1} completes the proof.

(d). Assume that $S$ is a simple right  $R$-module. Then, by hypothesis, the right $R$-module $\oplus_{1}^{\infty} {\rm E}(S)$  has weakly uniserial dimension.  Now, by \Cref{E(s)} and  hypothesis we conclude that  all simple right $R$-modules are  injective or singular. In first case $R$ is a $V$-ring and in second case  \cite[Lemma 7.2]{Lam} implies that ${\rm soc}^{2}(R_{R})=0$.
\end{proof}

 A \textit{height sequence} $(\alpha_{p})_{p \in \pi}$   is a sequence of non-negative  integers  together with $\infty$, indexed by the elements of $\pi$ (the set of primes of  $\mathbb{Z}$). Let $A$ be a torsion-free  abelian group, $a \in A$  and $p$ is a prime number. Recall that  \textit{$p$-height}  $a$ in $A$, denoted by $h_{p}(a)$, is a non-negative integer $n$ with $a \in p^{n}A \backslash p^{n+1}A$ and $\infty$ if no such $n$ exists. Also   $h_{A}(a) $ (the height sequence  $a$ in $A$)  is the height sequence $(h_{p}(a))_{p \in \pi}$.
For simplicity, we write  $(\alpha_{p})$  instead of  the  height sequence  $(\alpha_{p})_{p \in \pi}$.  
Two height sequences $(\alpha_{p})$ and $(\beta_{p})$ are \textit{equivalent} if $\alpha_{p} = \beta_{p}$ for all but a finite number $p$ and $\alpha_{p} = \beta_{p}$ if either $\alpha_{p}= \infty$ or $\beta_{p}= \infty$. It is easy to see that  this relation is an equivalence relation. An equivalence class $\tau$ of height sequences is called a type, written $\tau=[ \alpha ] $ for some height sequence $\alpha$. Now if $A$ is a torsion-free  abelian group and $a \in A$, then we define the type  $a$ in $A$, to be ${\rm type}_{A}(a)=[h_{A}(a) ] $. If any two nonzero elements of  $A$ have the same type, then the common value being denoted by ${\rm type}(A)$. In this case $A$ is called \textit{homogeneous}, (for more details see \cite{Ar}).

The set of height sequences has a partial ordering given by $(\alpha_{p}) \leq (\beta_{p}) $ if $\alpha_{p} \leq \beta_{p}$ for each $p \in \pi$. Now let ${\rm type}(A)= [(\alpha_{p})]$ and ${\rm type}(B)= [(\beta_{p})]$, where $A$ and $B$ are homogeneous torsion-free abelian groups. Then we define ${\rm type}(A) \leq {\rm type}(B)$ if there exist $(\alpha'_{p}) \in [(\alpha_{p})]$ and $(\beta'_{p}) \in [(\beta_{p})]$ such that $(\alpha'_{p}) \leq (\beta'_{p})$. It is easy to see that if ${\rm type}(A) \leq {\rm type}(B)$ and ${\rm type}(B) \leq {\rm type}(A)$, then ${\rm type}(A) = {\rm type}(B)$. Let $A$ be a torsion-free abelian group. Recall that the \textit{rank} of $A$, denoted by  ${\rm rank}(A)$, is defined  ${\rm dim}_{\mathbb{Q}}(A \otimes_{\mathbb{Z}} \mathbb{Q})$.

\begin{Rem}\label{rank subgroup}
\rm{ A torsion-free abelian group $A$ has rank $1$ if and only if $A$ is isomorphic to a subgroup of $\mathbb{Q}$.
For if ${\rm rank}(A)=1$, then $A \otimes_{\mathbb{Z}} \mathbb{Q} \cong_{\mathbb{Q}} \mathbb{Q}$ and since $A$ is flat, $A$ is isomorphic to a subgroup of $\mathbb{Q}$. On the other hand, if $A$ is a subgroup of $\mathbb{Q}$, then since $\mathbb{Q}_{\mathbb{Z}}$ is flat, $A \otimes_{\mathbb{Z}} \mathbb{Q}$ is embedded  in $\mathbb{Q} \otimes_{\mathbb{Z}} \mathbb{Q} \cong_{\mathbb{Q}} \mathbb{Q}$. Thus ${\rm dim}_{\mathbb{Q}}(A \otimes_{\mathbb{Z}} \mathbb{Q}) =1$ and so ${\rm rank}(A)=1$.} 
 \end{Rem}

\begin{Lem}\label{etype}
{\rm{ (see \cite[Theorem 1.1]{Ar})}} Let $A$ and $B$ be torsion-free abelian groups of ${\rm rank}$ $1$. Then $A$  and $B$ are isomorphic,  if and only if  ${\rm type}(A) = {\rm type}(B)$.
\end{Lem}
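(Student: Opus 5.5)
The plan is to prove both directions by expressing each rank-$1$ group as a subgroup of $\mathbb{Q}$ (\Cref{rank subgroup}) and reading off isomorphisms from heights.

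\textbf{Heights are invariant.} For $(\Rightarrow)$, let $\varphi: A\to B$ be an isomorphism. Then $a\in p^nA$ if and only if $\varphi(a)\in p^nB$, so $h_A(a)=h_B(\varphi(a))$ as height sequences. Next we check that a rank-$1$ group is homogeneous, so that ${\rm type}(A)$ makes sense. Any two nonzero $a,a'\in A$ satisfy $ma=na'$ for some nonzero integers $m,n$, since they lie in a subgroup of $\mathbb{Q}$. Multiplication by a nonzero integer $k$ changes $h_p$ only for the finitely many $p\mid k$, and it changes each of these by a finite amount, so it preserves $\infty$ entries. Hence $h(a)$ and $h(a')$ are equivalent. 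It follows that ${\rm type}(A)={\rm type}_A(a)={\rm type}_B(\varphi(a))={\rm type}(B)$.

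\textbf{Reduction for the converse.} For $(\Leftarrow)$, view $A,B\subseteq\mathbb{Q}$. Choose nonzero $a\in A$ and $b\in B$. After rescaling $B$ by a rational number, which is an isomorphism, we may assume $a=b=1$. We have $h_A(1)\sim h_B(1)$. Since they differ at only finitely many primes, and differ by finite amounts there, multiplying $B$ by a suitable rational $\prod p^{e_p}$ over those primes makes the two height sequences of $1$ exactly equal. (This rescaling shifts the heights of $1$ by $-e_p$ at each such $p$.)

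\textbf{Recovering the group from the heights of $1$.} We show that a subgroup $A\subseteq\mathbb{Q}$ containing $1$ is determined by $h_A(1)=(\alpha_p)$. The claim is
\[
A=\Bigl\{\tfrac{r}{s}\in\mathbb{Q}: \gcd(r,s)=1,\ v_p(s)\le\alpha_p \text{ for all } p\Bigr\}.
\]
For ``$\subseteq$'': if $r/s\in A$ with $p^k\mid s$, then $s/p^k\cdot(r/s)=r/p^k\in A$. Since $\gcd(r,p)=1$, a Bezout combination $ur+vp^k=1$ gives $u\cdot(r/p^k)+v\cdot 1=1/p^k\in A$, so $k\le\alpha_p$. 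For ``$\supseteq$'': if each $1/p^{v_p(s)}\in A$, then combining these via Bezout across the distinct prime powers dividing $s$ gives $1/s\in A$, hence $r/s\in A$. Therefore equal height sequences force $A=B$ after the rescalings, which gives $A\cong B$.

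The main obstacle is this reconstruction, specifically the Bezout step showing that $r/p^k\in A$ implies $1/p^k\in A$. Some care is also needed to ensure the rational rescaling handles the $\infty$ entries correctly; these match by the definition of equivalence, so they need no adjustment.
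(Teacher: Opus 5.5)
Your proof is correct. The paper gives no argument for this lemma; it quotes Arnold's book (Theorem 1.1 there, Baer's classification of rank-one groups). So your proposal supplies a self-contained proof where the paper has only a citation. Your route is essentially Baer's classical argument:
\begin{itemize}
\item embed the group in $\mathbb{Q}$;
\item normalize a nonzero element to $1$;
\item use B\'ezout to show that a subgroup of $\mathbb{Q}$ containing $1$ equals $\{r/s:\gcd(r,s)=1,\ v_p(s)\le\alpha_p\}$, where $(\alpha_p)=h(1)$.
\end{itemize}
It makes explicit two things the citation leaves implicit. First, you check that rank-one groups are homogeneous. The paper only defines ${\rm type}(A)$ for homogeneous $A$, so this check is needed for the statement even to make sense. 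Second, you get an explicit description of a rank-one group from the heights of a single element. Your claim $h_p(ka)=h_p(a)+v_p(k)$ for a nonzero integer $k$ is stated without proof, but it is routine: use B\'ezout for $p\nmid k$ and torsion-freeness for $p\mid k$.

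Two small repairs are needed in the converse direction.
\begin{itemize}
\item To get $a=b=1$ you must rescale $A$ by $1/a$ as well as $B$ by $1/b$, not only $B$.
\item You then multiply $B$ by $q=\prod p^{e_p}$, with $e_p=\beta_p-\alpha_p$ and $(\beta_p)=h_B(1)$. Some $e_p$ may be positive, so you must check that $1$ still lies in $qB$, i.e.\ that $1/q\in B$. This holds because $e_p\le\beta_p$, as $\alpha_p\ge 0$.
\end{itemize}
Alternatively, avoid rational rescaling altogether. The integers $m=\prod_{\alpha_p<\beta_p}p^{\beta_p-\alpha_p}\in A$ and $n=\prod_{\beta_p<\alpha_p}p^{\alpha_p-\beta_p}\in B$ both have height sequence $(\max(\alpha_p,\beta_p))$ in their respective groups. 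You can therefore normalize $m$ and $n$ to $1$ and apply your reconstruction step directly.
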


\begin{The}
Let $A$ be a torsion-free abelian group of ${\rm rank}$ $1$. Then\\
\indent {\rm (a)} $\rm{W.dim}(A)=1$ if and only if $A\cong \mathbb{Z}$ or  $A\cong G_{p}= \langle \frac{1}{p^{n}} ~ | ~ n \in \mathbb{N}, ~p~ is ~a ~prime~ number \rangle$,  \indent\indent the   subgroup of $\mathbb{Q}$ generated by $\frac{1}{p^{n}}$   where $n \in \mathbb{N}$.\\
\indent {\rm (b)} $\rm{W.dim}(A)=n \geq 2$  if and only if ${\rm type}(A)=[(\alpha_{p})]$, where $\alpha_{p}=0$ for all but a finite  \indent\indent  number $p$ and for $n$ distinct prime number $p$,  $\alpha_{p}=\infty$.
\end{The}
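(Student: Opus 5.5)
The plan is to translate everything into the combinatorics of types, using \Cref{rank subgroup} and \Cref{etype}. Fix $A$ of rank $1$ with ${\rm type}(A)=[(\alpha_{p})]$, and put $I=\{p : \alpha_{p}=\infty\}$ and $F=\{p : 0<\alpha_{p}<\infty\}$.

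\textbf{Step 1: dictionary between subgroups and types.} Every nonzero subgroup $B$ of $A$ again has rank $1$. For rank-$1$ groups $B,C$ I will establish
\[
B\rightarrowtail C \iff {\rm type}(B)\leq {\rm type}(C).
\]
For the forward direction, a monomorphism cannot lower heights. For the converse, pick $b\in B$ and $c\in C$ with $h(b)\leq h(c)$ coordinatewise, which is possible after adjusting by finitely many primes. Then $b\mapsto c$ extends to a homomorphism $B\to C$, as in \cite{Ar}, and any nonzero homomorphism between rank-$1$ torsion-free groups is injective. Conversely, every type $\tau\leq{\rm type}(A)$ is realised by a subgroup of $A$. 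To see this, choose $0\neq a\in A$ and a representative $\beta\leq h_{A}(a)$ of $\tau$, and take the subgroup generated by all $a/m$ with $p$-part of $m$ at most $p^{\beta_{p}}$. Hence, by \Cref{etype}, the isomorphism classes of nonzero subgroups of $A$, ordered by embedding, form exactly the poset of types $\leq{\rm type}(A)$.

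\textbf{Step 2: modules without dimension.} Suppose $F$ or $I$ is infinite. Partition that infinite set into infinitely many infinite pieces $P_{1},P_{2},\dots$. Let $\beta^{(k)}$ be $(\alpha_{p})$ with the entries on $P_{1}\cup\cdots\cup P_{k}$ replaced by $0$, and let $A_{k}\subseteq A$ be the nested subgroups realising these types, built from a common element $a$. Since $\beta^{(k)}$ and $\beta^{(k+1)}$ differ at infinitely many primes, the type of $A_{k}$ is strictly larger than, and not $\leq$, the type of $A_{l}$ for $l>k$. So $A_{k}\not\rightarrowtail A_{l}$, exactly as in \Cref{Q}, and \Cref{1} shows that $A$ has no weakly uniserial dimension.

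Now assume $F$ and $I$ are finite with $|I|=n$. Altering finitely many finite entries does not change the type, so the types below ${\rm type}(A)$ are in bijection with the subsets $S\subseteq I$: the type is $\infty$ on $S$ and $0$ elsewhere. Embedding then corresponds to inclusion of subsets.

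\textbf{Step 3: computing ${\rm w.dim}$.} The module $A$ is weakly uniserial iff the power set of $I$ is a chain, i.e.\ $n\leq 1$. In that case $A\cong\mathbb{Z}$ (for $I=\emptyset$) or $A\cong G_{p}$ (for $I=\{p\}$), which gives (a). For (b), I will prove by induction on $|S|\geq 2$ that the subgroup $A_{S}$ of type $S$ has ${\rm w.dim}(A_{S})=|S|$.
\begin{itemize}
\item[] \emph{Upper bound.} The proper subgroups $N$ with $A_{S}\not\rightarrowtail N$ are those of type $T\subsetneq S$. Their dimensions are $1$ or $|T|\leq|S|-1$ by induction, so \Cref{rem1} gives ${\rm w.dim}(A_{S})\leq |S|$.
\item[] \emph{Lower bound.} Take $T=S\setminus\{q\}$. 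If $|T|\geq 2$, then $A_{S}\not\rightarrowtail A_{T}$ and ${\rm w.dim}(A_{T})=|S|-1$ force ${\rm w.dim}(A_{S})>|S|-1$ by the definition. If $|S|=2$, then $A_{S}$ is not weakly uniserial, so ${\rm w.dim}(A_{S})\geq 2$.
\end{itemize}
Combined with Step 2 this yields both directions of (b).

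\textbf{Main obstacle.} The delicate point is Step 1, in particular the realisation of every smaller type by an actual subgroup of $A$. The same construction is needed in Step 2, where the subgroups $A_{k}$ must be nested and not merely exist up to isomorphism. I would handle both by building everything inside the divisible hull $\mathbb{Q}a$, using explicit height conditions on a fixed element $a$.
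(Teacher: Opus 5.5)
Your proof is correct and follows the same route as the paper. It translates embeddings between rank-$1$ subgroups into the order on types, excludes types with infinitely many nonzero entries by a descending chain that never stabilizes up to monomorphism (so $A$ is not monoartinian), and computes ${\rm w.dim}$ by induction on the number of $\infty$ entries using the supremum bound of the Remark.

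Your execution is tighter than the paper's in a few places:
\begin{enumerate}
\item Dropping an infinite block $P_{k}$ of primes at each step is what makes your chain work when the nonzero entries are finite. The paper drops one prime at a time, so consecutive terms then have equivalent types and are isomorphic.
\item Your separate lower bound via $T=S\setminus\{q\}$ replaces the paper's inaccurate assertion that every $G$ with $A\not\rightarrowtail G$ has dimension exactly $n-1$.
\item You derive (a) directly instead of citing the earlier characterization of weakly uniserial rank-$1$ groups.
\end{enumerate}
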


\begin{proof}
(a).($\Rightarrow$). Assume that $\rm{W.dim}(A)=1$. Then $A$ is weakly uniserial and  ${\rm type}(A)=[(\alpha_{p})]$, where $\alpha_{p}=0$ for all but a finite number $p$ and there is at most one $p$ such that $\alpha_{p}=\infty$, by \cite[Theorem 4.6]{Shi2}. Therefore, $A\cong \mathbb{Z}$ or  $A\cong G_{p}$.

($\Leftarrow$). If $A\cong \mathbb{Z}$ or $A\cong G_{p} $, then  \cite[Theorem 4.6]{Shi2} implies that  $A$ is weakly uniserial and so $\rm{W.dim}(A)=1$.

(b).($\Leftarrow$). We prove  the assertion  by induction on $n$. Assume that ${\rm type}(A)=[(\alpha_{p})]$, where $\alpha_{p}=0$ for all but a finite   number $p$ and for two distinct prime number $p$,  $\alpha_{p}=\infty$. If  $G$ is a proper subgroupe of $A$ such that  $A \not\rightarrowtail G$ , then  ${\rm type}(G) \lneq {\rm type}(A)$ and so ${\rm type}(G) =[(\alpha_{p})]$  has at most one  $\infty$  and $\alpha_{p}=0$ for all but a finite  number $p$. Hence $G\cong \mathbb{Z}$ or $G\cong G_{p} $ and so by part (a), $\rm{W.dim}(G)=1$. Therefore, by definition $\rm{W.dim}(A)=2$.
Assume that the assertion is true for $n-1$ and let ${\rm type}(A)=[(\alpha_{p})]$, where $\alpha_{p}=0$ for all but a finite  number $p$ and for $n$ distinct prime number $p$,  $\alpha_{p}=\infty$. For any proper subgroupe $G$ of $A$ such that $A \not\rightarrowtail G$  we have  ${\rm type}(G) \lneq {\rm type}(A)$ and  so by induction hypothesis,  $\rm{W.dim}(G)=n-1$. Now by \Cref{rem1} we conclude that $\rm{W.dim}(A)=n$.

($\Rightarrow$). Assume that $\rm{W.dim}(A)=n \geq 2$ and consider the following cases:\\
\textbf{Case 1}: ${\rm type}(A)=[(\underbrace{0, 0,0, \ldots, 0}_{k-times}, m_{1}, m_{2}, m_{3}, \ldots )]$, where $k \in \mathbb{N} $ and $m_{i} \neq 0$ for any $i \geq 1$. Then
$ \langle  \frac{1}{ {p_{i_{1}}}^{m_{1}}}, \frac{1}{{p_{i_{2}}}^{m_{2}} }, \frac{1}{ {p_{i_{3}}}^{m_{3}} }, \cdots \rangle \supseteq \langle  \frac{1}{ {p_{i_{2}}}^{m_{2}} }, \frac{1}{ {p_{i_{3}}}^{m_{3}} }, 
\frac{1}{ {p_{i_{4}}}^{m_{4}} }, \cdots \rangle \supseteq \cdots $
 is a descending chain  of subgroups of $A$  that is not stopping up to monomorphism. Therefore by \Cref{1}, $A$ has not weakly uniserial dimension, a contradiction.\\
\textbf{Case 2}: If ${\rm type}(A)=[(\alpha_{p})]$ such that for $n+1$ distinct prime number $p$,  $\alpha_{p}=\infty$, then by ($\Leftarrow$) we have $\rm{W.dim}(A)=n +1$, a contradiction.\\
Therefore,  if $\rm{W.dim}(A)=n \geq 2$,  then ${\rm type}(A)=[(\alpha_{p})]$, where $\alpha_{p}=0$ for all but a finite  number $p$ and for $n$ distinct prime number $p$,  $\alpha_{p}=\infty$. 
\end{proof}

\begin{Cor}
Let $A$ be a torsion-free abelian group of ${\rm rank}$ $1$. Then $A$ has a finite weakly uniserial dimension.
\end{Cor}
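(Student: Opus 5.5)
The statement cannot be read literally, since $\mathbb{Q}$ is a torsion-free abelian group of rank $1$ with no weakly uniserial dimension (\Cref{Q}). I will therefore prove the intended version: \emph{if a torsion-free abelian group $A$ of rank $1$ has weakly uniserial dimension, then ${\rm w.dim}(A)$ is finite.} The strategy is to show that existence of the dimension forces ${\rm type}(A)$ into the form covered by parts (a) and (b) of the preceding theorem. Those parts then give the value $1$ or $n$ directly.

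By \Cref{rank subgroup} we may regard $A$ as a subgroup of $\mathbb{Q}$. Write ${\rm type}(A)=[(\alpha_p)]$. Since ${\rm w.dim}(A)$ exists, $A$ is monoartinian by \Cref{1}, and every subgroup of $A$ is monoartinian by \Cref{rem1}. I will use this to establish two restrictions on $(\alpha_p)$.

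First, only finitely many $p$ can have $0<\alpha_p<\infty$. This is exactly the argument of Case~1 in the proof of the preceding theorem. If infinitely many such primes $p_{i_1},p_{i_2},\dots$ exist, the subgroups $\langle 1/p_{i_j}^{m_j} \mid j\ge k\rangle$ of $A$ form a descending chain. Any two of its members have distinct types, since they differ at infinitely many primes. By \Cref{etype}, a monomorphism between rank $1$ groups forces an inequality of types, so the chain does not stabilize up to embedding. This contradicts \Cref{1}. Because altering finitely many finite entries does not change the type, we may then choose a representative of ${\rm type}(A)$ with every $\alpha_p\in\{0,\infty\}$.

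Second, only finitely many $p$ can have $\alpha_p=\infty$. Suppose instead that $\alpha_p=\infty$ for infinitely many primes $q_1,q_2,\dots$. Imitating \Cref{Q}, set $A_i$ to be the subgroup of $A$ consisting of elements whose $q_j$-heights are finite for $j\le i$; concretely, $A_i$ is $A$ intersected with the localization of $\mathbb{Z}$ at $q_1,\dots,q_i$. This gives a descending chain of subgroups of $A$. The group $A_n$ is $q_{n+1}$-divisible while $A_{n+1}$ is not, so $A_n\not\rightarrowtail A_{n+1}$. Hence $A$ is not monoartinian, again contradicting \Cref{1}.

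The main obstacle is this second step: I must check that the $A_i$ genuinely lie inside $A$ and that the divisibility comparison forbids embeddings. Both points reduce to comparing types via \Cref{etype}, since an embedding of rank $1$ groups cannot raise an $\infty$ entry. Once both restrictions hold, ${\rm type}(A)=[(\alpha_p)]$ with $\alpha_p=0$ for almost all $p$ and $\alpha_p=\infty$ for exactly $n$ primes, where $n<\infty$. If $n\le 1$, then $A\cong\mathbb{Z}$ or $A\cong G_p$, and part (a) gives ${\rm w.dim}(A)=1$. If $n\ge 2$, part (b) gives ${\rm w.dim}(A)=n$. In either case ${\rm w.dim}(A)$ is finite.
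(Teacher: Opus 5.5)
Your reading of the statement is right: $\mathbb{Q}$ refutes the literal version, so the claim can only be ``if ${\rm w.dim}(A)$ exists, it is finite.'' Your route is the one the paper implicitly relies on, namely forcing ${\rm type}(A)$ into the shape covered by parts (a) and (b) of the preceding theorem. Your second step is sound, provided $A_i$ is taken to be $A\cap\mathbb{Z}_{(q_1,\dots,q_i)}$ as in your ``concretely'' clause; the elements of $A$ with finite $q_j$-height \emph{in $A$} form only $0$.

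The genuine gap is the first step. Take $B_k=\langle 1/p_{i_j}^{m_j}\mid j\ge k\rangle$ and $k<k'$. The groups $B_k$ and $B_{k'}$ do \emph{not} differ at infinitely many primes. The sequences $h_{B_k}(1)$ and $h_{B_{k'}}(1)$ differ only at $p_{i_k},\dots,p_{i_{k'-1}}$, and there both entries are finite. By the definition of equivalence of height sequences, the two groups therefore have the same type, so $B_k\cong B_{k'}$ by \Cref{etype}. Explicitly, multiplication by $N=\prod_{k\le j<k'}p_{i_j}^{m_j}$ embeds $B_k$ into $B_{k'}$. Your chain thus stabilizes up to embedding immediately and gives no contradiction with \Cref{1}. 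Citing Case~1 of the paper's proof does not help, because it uses exactly the same chain and has the same defect.

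The repair is to discard infinitely many primes at each stage.
\begin{enumerate}
\item Rescale so that $1\in A$. Then $A=\langle 1/p^{t}\mid t\le\alpha_p\rangle$, where $(\alpha_p)=h_A(1)$.
\item Partition the infinite set of primes with $0<\alpha_p<\infty$ into infinitely many pairwise disjoint infinite sets $T_1,T_2,\dots$.
\item Put $B_k=\langle 1/p^{\alpha_p}\mid p\in T_k\cup T_{k+1}\cup\cdots\rangle\subseteq A$.
\item At each $p\in T_k$, the entry of $h_{B_k}(1)$ is $\alpha_p\ge 1$ while the entry of $h_{B_{k+1}}(1)$ is $0$. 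Since $T_k$ is infinite, ${\rm type}(B_{k+1})<{\rm type}(B_k)$ strictly.
\item Homomorphisms do not decrease heights, so a monomorphism $X\rightarrowtail Y$ of rank $1$ groups forces ${\rm type}(X)\le{\rm type}(Y)$. This is the fact you actually need here, not \Cref{etype} itself.
\item Hence $B_k\not\rightarrowtail B_{k+1}$ for every $k$, and $A$ is not monoartinian.
\end{enumerate}
With this replacement the rest of your argument goes through. The same partition trick would also dispose of infinitely many $\infty$ entries, though your divisibility argument already handles that case.
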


\section{\hspace{-6mm}. Monoartinian (Mononoetherian) rings and modules}
In this section, we study monoartinian (mononoetherian)  modules  and rings and investigate some of their fundamental properties.

\begin{Def}
{\rm
Let $R$ be a ring and $M$ be a right $R$-module. We say that  $M$ is \textit{mononoetherian} if, for every ascending chain $M_{1} \subseteq M_{2} \subseteq  \cdots $ of submodules
of $M$, there exists an index $n \geq 1$ such that $M_{i}$ is embedded in $M_{n}$ for every $ i \geq n$.
}
\end{Def}

\begin{Lem}\label{mono-comp}
A module $M$ is monoartinian {\rm(}mononoetherian{\rm)} if and only if for every non-empty set $\mathcal{F}$ of submodules of $M$, there exists $N \in \mathcal{F}$  such that  for every  $N' \subseteq N$ {\rm(}$N \subseteq N'${\rm)}, if $N' \in \mathcal{F}$, then    $N \rightarrowtail N'$ {\rm(}$N' \rightarrowtail N${\rm)}. Equivalently, $M$ is monoartinian {\rm(}mononoetherian{\rm)}  if and only if  for every  non-empty chain $\mathcal{C}$ of submodules of $M$, there exists $N \in \mathcal{C}$  such that  for every  $N' \subseteq N$ {\rm(}$N \subseteq N'${\rm)},  if   $N' \in \mathcal{C}$, then   $N \rightarrowtail N'$ {\rm(}$N' \rightarrowtail N${\rm)}.
\end{Lem}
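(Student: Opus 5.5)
We treat the monoartinian case; the mononoetherian case is the same argument with inclusions reversed, ascending chains in place of descending ones, and the direction of the required embedding reversed. The plan is to prove a cycle of three implications:
(i) chain condition $\Rightarrow$ family condition;
(ii) family condition $\Rightarrow$ chain-family condition (a chain is a non-empty family, so this is immediate);
(iii) chain-family condition $\Rightarrow$ chain condition.

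For (iii), take a descending chain $M_1\supseteq M_2\supseteq\cdots$ and apply the chain-family condition to $\mathcal{C}=\{M_i \mid i\geq 1\}$. This gives some $N=M_n\in\mathcal{C}$ such that $N\rightarrowtail N'$ for every $N'\in\mathcal{C}$ with $N'\subseteq N$. If the chain repeats values, choose $n$ to be any index with $M_n=N$. For $i\geq n$ we have $M_i\subseteq M_n$ and $M_i\in\mathcal{C}$, so $M_n\rightarrowtail M_i$, which is exactly the monoartinian condition.

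For (i), argue by contraposition. Suppose $\mathcal{F}$ is a non-empty family of submodules with no element $N$ having the stated property. Then for every $N\in\mathcal{F}$ there is some $N'\in\mathcal{F}$ with $N'\subseteq N$ and $N\not\rightarrowtail N'$. Using the axiom of dependent choice, start at any $N_1\in\mathcal{F}$ and pick $N_{k+1}\subseteq N_k$ in $\mathcal{F}$ with $N_k\not\rightarrowtail N_{k+1}$. Since $M$ is monoartinian, there is an $n$ with $N_n\rightarrowtail N_i$ for all $i\geq n$; taking $i=n+1$ contradicts the choice of $N_{n+1}$. This is the same construction used in the $(\Leftarrow)$ direction of \Cref{1}.

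The main point to check is (iii). Because $\mathcal{C}$ is a set, repeated terms collapse, but the condition at the chosen $N$ still covers every $M_i\subseteq N$, so nothing is lost. No transitivity of $\rightarrowtail$ is needed anywhere, since both the conclusion and the hypothesis compare against the single distinguished element $N$. The "equivalently" clause follows because (ii) and (iii) close the cycle.
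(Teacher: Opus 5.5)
Your proof is correct and takes essentially the same route as the paper. The forward direction repeatedly picks a smaller member of $\mathcal{F}$ into which the previous one does not embed, producing a descending chain that violates the monoartinian condition; the converse applies the hypothesis to $\{M_1, M_2, \ldots\}$. Your one addition is to observe explicitly that this family is itself a chain, which closes your cycle of implications and justifies the ``equivalently'' clause that the paper leaves implicit.
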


\begin{proof}
$(\Leftarrow)$. Let $M_{1} \supseteq M_{2} \supseteq \cdots$,
  be a descending chain of submodules of $M$ and $\mathcal{F}=\lbrace M_{1}, M_{2}, \ldots  \rbrace$
   is a non-empty  set of  submodules of  $M$. By hypothesis,  there exists $M_{i} \in \mathcal{F}$  which  for every $M_{j} \subseteq M_{i}$ such that $M_{j} \in \mathcal{F}$  we have    $M_{i} \rightarrowtail M_{j}$. This shows that  $M$ is monoartinian.
   
 $(\Rightarrow)$. Suppose that $M$  is monoartinian and $\mathcal{F}$ is a non-empty set of submodules  of  $M$. Let $M_{0}$  be an arbitrary element of $\mathcal{F}$. If for every $N \subseteq M_{0}$ which $N \in \mathcal{F}$ we have $M_{0} \rightarrowtail N$, then we are done. Assume that there exists  $M_{1} \subseteq M_{0}$ such that  
 $M_{1} \in \mathcal{F}$ and $M_{0} \not\rightarrowtail M_{1}$. If for every $N \subseteq M_{1}$ which $N \in \mathcal{F}$ we have $M_{1} \rightarrowtail N$, then we are done. Otherwise, there exists  $M_{2} \subseteq M_{1}$ such that  
 $M_{2} \in \mathcal{F}$ and $M_{1} \not\rightarrowtail M_{2}$. By continuing this process, either there exists $M_{i} \in \mathcal{F}$  such that  for every  $N \subseteq M_{i}$ which $N \in \mathcal{F}$, we have    $M_{i} \rightarrowtail N$ or there exists a descending chain $M_{0} \supsetneq M_{1} \supsetneq \cdots,$ of submodules of $M$ such that  $M_{i} \not\rightarrowtail M_{i+1}$, for every $i \geq 0$. The second case is a contradiction and this completes the proof.
\end{proof}

\begin{Rem}
{\rm
In  above lemma,  consider  $\mathcal{F}$ as the set of all non-zero submodules of $M$.  If  $M$  is a non-zero monoartinian module, then $M$ contains a non-zero submodule $L$ embedded in all its non-zero submodules. Thus $L$ is compressible.  Clearly every submodule of a monoartinian (mononoetherian) module  is monoartinian (mononoetherian).
}
\end{Rem}

To give some further examples, we need the following propositions.

\begin{Pro}
Let $M$ be a projective isosimple $R$-module and $N$ be a monoartinian
{\rm(}mononoetherian{\rm)} module. Then $N\oplus M$ is a monoartinian {\rm(}mononoetherian{\rm)}
$R$-module.
\end{Pro}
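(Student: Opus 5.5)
The idea is to split each submodule of $N\oplus M$ into a part lying in $N$ and a copy of $M$, and then use the hypothesis on $N$. I read ``isosimple'' as: $M\neq 0$ and every non-zero submodule of $M$ is isomorphic to $M$.

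Let $\pi\colon N\oplus M\to M$ be the canonical projection. For a submodule $L$ of $N\oplus M$, we have $\ker(\pi|_{L})=L\cap N$, so there is an exact sequence
$$0\to L\cap N\to L\to \pi(L)\to 0.$$
If $\pi(L)\neq 0$, then $\pi(L)$ is a non-zero submodule of $M$, so $\pi(L)\cong M$. Since $M$ is projective, the sequence splits and
$$L\cong (L\cap N)\oplus M.$$
If $\pi(L)=0$, then $L\subseteq N$. This dichotomy is the key step; it is the only place projectivity and isosimplicity are used, and everything else is bookkeeping.

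\emph{Monoartinian case.} Let $L_{1}\supseteq L_{2}\supseteq\cdots$ be a descending chain in $N\oplus M$, and put $K_{i}=L_{i}\cap N$. Then $K_{1}\supseteq K_{2}\supseteq\cdots$ is a descending chain in $N$, and the images $\pi(L_{i})$ also descend.
\begin{enumerate}
\item Suppose $\pi(L_{m})=0$ for some $m$. Then $\pi(L_{i})=0$ for all $i\geq m$, so $L_{m}\supseteq L_{m+1}\supseteq\cdots$ is a chain in $N$. Since $N$ is monoartinian, there is $n\geq m$ with $L_{n}\rightarrowtail L_{i}$ for all $i\geq n$.
\item Otherwise $\pi(L_{i})\neq 0$ for every $i$, so $L_{i}\cong K_{i}\oplus M$ for all $i$. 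Since $N$ is monoartinian, there is $n$ with $K_{n}\rightarrowtail K_{i}$ for all $i\geq n$. Then
$$L_{n}\cong K_{n}\oplus M\rightarrowtail K_{i}\oplus M\cong L_{i}\quad\text{for all } i\geq n,$$
using the direct sum of the monomorphism $K_{n}\rightarrowtail K_{i}$ with the identity on $M$.
\end{enumerate}
In both cases the chain stabilizes up to embedding, so $N\oplus M$ is monoartinian.

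\emph{Mononoetherian case.} Let $L_{1}\subseteq L_{2}\subseteq\cdots$ be an ascending chain, so the $\pi(L_{i})$ increase.
\begin{enumerate}
\item If $\pi(L_{i})=0$ for all $i$, the whole chain lies in $N$, and the mononoetherian property of $N$ gives the required $n$.
\item Otherwise there is $m$ with $\pi(L_{i})\neq 0$ for all $i\geq m$. For these $i$ we have $L_{i}\cong K_{i}\oplus M$, where $K_{m}\subseteq K_{m+1}\subseteq\cdots$ is an ascending chain in $N$. Choose $n\geq m$ with $K_{i}\rightarrowtail K_{n}$ for all $i\geq n$. Then $L_{i}\cong K_{i}\oplus M\rightarrowtail K_{n}\oplus M\cong L_{n}$ for all $i\geq n$.
\end{enumerate}
Hence $N\oplus M$ is mononoetherian.
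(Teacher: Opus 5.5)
Your proposal is correct and follows essentially the paper's argument: project onto $M$, note that a non-zero image is isomorphic to the projective module $M$ so the restriction splits, and reduce to the chain of kernels $L_i\cap N$ in $N$. Your explicit case split on whether $\pi(L_i)$ eventually vanishes, and your separate treatment of the ascending chain, spell out details the paper's proof leaves implicit.
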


\begin{proof}
Let $M$ be a projective isosimple $R$-module and $N$ be a monoartinian module. Assume that 
$T_{1} \supseteq T_{2} \supseteq  \cdots$ is  a descending chain of submodules of  $M \oplus N$ and $p$ is the projection of $M \oplus N$ onto $M $ with kernel $ N$. denote by $p_{i}$ the restriction of $p$ to $T_{i}$ for every $i$.
Then ${\rm im}(p_{i}) =0$  or ${\rm im}(p_{i}) \cong M$, and so ${\rm im}(p_{i})$ is projective. Thus  $p_{i}$ is a splitting epimorphism and so there exists $\varepsilon_{i }: {\rm im}(p_{i}) \rightarrow T_{i}$  such that 
$T_{i} ={\rm ker}(p_{i}) \oplus \varepsilon_{i }({\rm im}(p_{i}))$. Now, considering that   $N \supseteq {\rm ker}(p_{1}) \supseteq {\rm ker}(p_{2}) \supseteq \cdots$ is stabilizes up to monomorphism  and   $\varepsilon_{i}$, in fact,  is a monomorphism,  we conclude that $T_{1} \supseteq T_{2} \supseteq  \cdots$ is stopping up to monomorphism, as desired.
\end{proof}

\begin{Examp}
\rm{(1)}
Every finitely generated $\mathbb{Z}$-module is monoartinian (mononoetherian).

\rm{(2)} The $\mathbb{Z}$-module $\mathbb{Z}_{p^{\infty}} \oplus \mathbb{Z}$ is monoartinian.
\end{Examp}

\begin{Pro}
Let $M$ be a finitely generated semisimple $R$-module and $N$ be a mononoetherian  
{\rm(}monoartinian{\rm)} $R$-module. Then $M\oplus N$ is a  mononoetherian   {\rm(}monoartinian{\rm)} $R$-module.
\end{Pro}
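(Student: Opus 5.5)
We would reduce to the case where $M$ is simple and then argue by induction on the length of $M$. Since $M$ is finitely generated semisimple, $M=S_{1}\oplus\cdots\oplus S_{k}$ with each $S_{j}$ simple. Suppose we know that $S\oplus L$ is monoartinian (mononoetherian) whenever $S$ is simple and $L$ is monoartinian (mononoetherian). Then induction on $k$ finishes the proof, applied to $S_{1}\oplus(S_{2}\oplus\cdots\oplus S_{k}\oplus N)$. So the whole content is the case $M=S$ simple.

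For this case we do not use the projection onto $S$, as in the previous proposition, since $S$ need not be projective. Instead we use the projection $q\colon S\oplus N\to N$ with kernel $S$. For any submodule $T$ of $S\oplus N$, the intersection $T\cap S$ is either $0$ or $S$, because $S$ is simple. This gives two cases.
\begin{enumerate}
\item[(i)] If $T\cap S=0$, then $q|_{T}$ is injective, so $T\cong q(T)\subseteq N$.
\item[(ii)] If $S\subseteq T$, then the modular law gives $T=S\oplus(T\cap N)$.
\end{enumerate}

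Now take a descending chain $T_{1}\supseteq T_{2}\supseteq\cdots$ in $S\oplus N$. The property ``$S\subseteq T_{i}$'' is inherited downward along the chain in reverse: once it fails for some $i$, it fails for all later indices. Hence either $S\subseteq T_{i}$ for all $i$, or from some index $m$ on $T_{i}\cap S=0$.

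In the first case, $T_{i}=S\oplus(T_{i}\cap N)$, and $T_{1}\cap N\supseteq T_{2}\cap N\supseteq\cdots$ is a descending chain in $N$. Monoartinianity of $N$ gives $n$ and monomorphisms $f_{i}\colon T_{n}\cap N\rightarrowtail T_{i}\cap N$ for $i\geq n$. Then ${\rm id}_{S}\oplus f_{i}$ embeds $T_{n}$ into $T_{i}$.

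In the second case, the chain $q(T_{m})\supseteq q(T_{m+1})\supseteq\cdots$ is descending in $N$. Since $T_{i}\cong q(T_{i})$ for $i\geq m$, an index $n\geq m$ with $q(T_{n})\rightarrowtail q(T_{i})$ for all $i\ge n$ yields $T_{n}\rightarrowtail T_{i}$.

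The mononoetherian case is identical with an ascending chain. There ``$S\subseteq T_{i}$'' is monotone increasing, so either it never holds, and all $T_{i}\cong q(T_{i})$, or it holds from some index on, and then $T_{i}=S\oplus(T_{i}\cap N)$. In each case the required embeddings $T_{i}\rightarrowtail T_{n}$ come from the corresponding embeddings for the chain $(q(T_{i}))$, respectively $(T_{i}\cap N)$, in $N$.

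The main obstacle is precisely avoiding projectivity of $S$. The key observation that removes it is the dichotomy $T\cap S\in\{0,S\}$ together with the eventual stabilization of this dichotomy along a chain. Once that is in place, the rest is routine bookkeeping, and we would take care only that the monomorphisms obtained in $N$ transport correctly through the isomorphisms $T_{i}\cong q(T_{i})$ and the decompositions $T_{i}=S\oplus(T_{i}\cap N)$.
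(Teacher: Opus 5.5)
Your proof is correct and follows the same route as the paper's. You reduce to a single simple summand $S$, use the dichotomy $T\cap S\in\{0,S\}$ along the chain, and in the case $S\subseteq T_i$ pass to the chain $T_i\cap N$ via $T_i=S\oplus(T_i\cap N)$. Your use of the projection $q$ when $T_i\cap S=0$ is more careful than the paper, which simply asserts that such $T_i$ are submodules of $N$; that is true only up to the isomorphism $T_i\cong q(T_i)$. You also treat the monoartinian case explicitly, which the paper leaves implicit.
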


\begin{proof}
Assume that  $N$ is a mononoetherian  $R$-module and $M=\oplus_{i=1}^{n} S_{i}$,  where $S_{i}$'s are simple $R$-module. By induction, it is sufficient to prove the assertion for $S \oplus N$,  where $S$  is a simple $R$-module. Let $K_{1} \subseteq K_{2} \subseteq \cdots$ be an ascending chain of submodules of  $S \oplus N$. If  $K_{i} \cap S =0$ for all $i$, then $K_{i}$'s  are submodules of $N$ and so we are done. If $K_{i} \cap S \neq 0$ for some $i$, then $ S \subseteq K_{i}$ and so $ S \subseteq K_{j}$ for any $j \geq i$. 
Now, consider the ascending chain
$K_{j} \cap N \subseteq K_{j+1} \cap N \subseteq \cdots $ of submodules of $N$. Since  $N$ is mononoetherian, there exists $l  \geq j$  such that   $K_{t} \cap N  \rightarrowtail K_{l} \cap N $ for any $t \geq l$.
Therefore
$$K_{t}=K_{t} \cap (S \oplus N)=S \oplus (K_{t} \cap N) \rightarrowtail S \oplus (K_{l} \cap N) =K_{l} \cap (S \oplus N) =K_{l},$$ as desired.
\end{proof}

\begin{Rem}
\rm {
 The finitely generated condition  in above proposition is necessary. For example the $\mathbb{Z}$-module $(\oplus_{p \in \mathbb{P}} \mathbb{Z}_{p}) \oplus \mathbb{Z}$ where $P$ is the set of prime numbers is not mononoetherian (monoartinian).}
\end{Rem}

\begin{Def}
\rm{ We say that a ring $R$ is right monoartinian {\rm(}mononoetherian{\rm)} if the right
$R$-module $R_{R}$ is monoartinian {\rm(}mononoetherian{\rm)}.}
\end{Def}

\begin{Pro}\label{upper}
Let $R$ and $S$ be two rings  and $_{R}M_{S}$ be a bimodule. The ring
 $T= \left[
         \begin{array}{rr}
              R  & M \\
              0 &  S 
          \end{array} \right]$
is right monoartinian {\rm(}right mononoetherian{\rm)} if and only if the right modules
$(M \oplus S)_{S}$
 and $R_{R}$ are monoartinian {\rm(}mononoetherian{\rm)}.
\end{Pro}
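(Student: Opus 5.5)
We would prove the statement by translating right ideals of $T$ and their embeddings into data over $R$ and over $S$, using the idempotents $e_{1}=\left[\begin{array}{rr}1&0\\0&0\end{array}\right]$ and $e_{2}=\left[\begin{array}{rr}0&0\\0&1\end{array}\right]$. For a right ideal $X$ of $T$ we have $X=Xe_{1}\oplus Xe_{2}$ as abelian groups. Here $Xe_{1}$ is the first column of $X$; it is a right ideal of $R=e_{1}Te_{1}$, viewed inside $R$. Likewise $Xe_{2}$ is the second column; it is an $S$-submodule of $(M\oplus S)_{S}$, where $S=e_{2}Te_{2}$. The only interaction between the two parts is $Xe_{1}\cdot e_{1}Te_{2}\subseteq Xe_{2}$, that is, $(Xe_{1})M\subseteq Xe_{2}$ (inside the $M$-component). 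Conversely, any pair $(I,B)$ with $I$ a right ideal of $R$, $B\subseteq (M\oplus S)_{S}$ and $IM\oplus 0\subseteq B$ determines a right ideal of $T$. A $T$-monomorphism $h:X\to Y$ satisfies $h(Xe_{i})\subseteq Ye_{i}$. So it restricts to an $R$-monomorphism $Xe_{1}\rightarrowtail Ye_{1}$ and to an $S$-monomorphism $Xe_{2}\rightarrowtail Ye_{2}$, and these two restrictions are compatible with the action of $e_{1}Te_{2}\cong M$.

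For $(\Rightarrow)$ we will build two families of right ideals. Given a descending (ascending) chain $I_{1}\supseteq I_{2}\supseteq\cdots$ of right ideals of $R$, we form the right ideals $X_{k}$ of $T$ with first column $I_{k}$ and second column $I_{k}M\oplus 0$. These form a chain of the same kind. Since $T_{T}$ is monoartinian (mononoetherian), we get an index $n$ and $T$-monomorphisms between the appropriate $X_{k}$'s. Multiplying by $e_{1}$ gives the required $R$-monomorphisms between the $I_{k}$'s, so $R_{R}$ is monoartinian (mononoetherian). Similarly, a chain $B_{k}$ of $S$-submodules of $M\oplus S$ gives the right ideals with first column $0$ and second column $B_{k}$. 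The same argument restricted via $e_{2}$ shows that $(M\oplus S)_{S}$ is monoartinian (mononoetherian). This direction should be routine.

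For $(\Leftarrow)$ we start from a chain $X_{1}\supseteq X_{2}\supseteq\cdots$ of right ideals of $T$. This produces the chains $X_{k}e_{1}$ in $R$ and $X_{k}e_{2}$ in $M\oplus S$. By hypothesis, and by taking the larger of the two indices, we get one $n$ with $R$-monomorphisms $f_{k}:X_{n}e_{1}\rightarrowtail X_{k}e_{1}$ and $S$-monomorphisms $g_{k}:X_{n}e_{2}\rightarrowtail X_{k}e_{2}$ for all $k\geq n$. The candidate $T$-monomorphism is $h_{k}=f_{k}\oplus g_{k}$. It is injective, and it is additive and compatible with $e_{1}Te_{1}$ and $e_{2}Te_{2}$. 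The main obstacle is compatibility with the off-diagonal part, namely the identity $g_{k}(am\oplus 0)=f_{k}(a)m\oplus 0$ for $a\in X_{n}e_{1}$ and $m\in M$.

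To handle this compatibility we would first try to choose the $g_{k}$ in a controlled way. One option is to apply Lemma~\ref{mono-comp} not to the bare chain $X_{k}e_{2}$ but to a family that records $(X_{k}e_{1})M$ alongside $X_{k}e_{2}$. The aim is to obtain embeddings of $X_{n}e_{2}$ that restrict on $(X_{n}e_{1})M\oplus0$ to the map induced by $f_{k}$. Note that left multiplication by an element of $R$ is always compatible, so when $f_{k}$ is given by left multiplication, $g_{k}$ can be taken to be the induced map. If this cannot be arranged in general, the fallback is to check the equivalence only for chains in which the first columns are eventually equal up to an isomorphism induced from $R$, and to reduce to this case using the monoartinian (mononoetherian) property of $R_{R}$. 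We expect this compatibility step to be where the real work, or a necessary extra hypothesis, lies.
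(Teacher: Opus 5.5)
\textbf{The $(\Leftarrow)$ half is unproved, and the compatibility obstruction you flag cannot be overcome, because that half is false as stated.}

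Your $(\Rightarrow)$ direction is correct. The maps $I_k\mapsto I_kT$ and $B_k\mapsto$ (first column $0$, second column $B_k$) turn chains in $R_R$ and $(M\oplus S)_S$ into chains of right ideals of $T$. Restricting a $T$-monomorphism to the $e_1$- and $e_2$-components then gives the required $R$- and $S$-monomorphisms.

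Your remark that left multiplication by $c\in R$ is always compatible overlooks injectivity. The induced map $x\mapsto cx$ on $(X_ne_1)M$ can have a kernel. Here is a counterexample for the monoartinian case.
\begin{itemize}
\item Take $R=S=\mathbb{Z}$ and $M=\mathbb{Z}_{p^\infty}$. Then $R_R$ is monoartinian.
\item $\mathbb{Z}_{p^\infty}\oplus\mathbb{Z}$ is also monoartinian; the paper lists it as an example. Every subgroup is its torsion part, a subgroup of $\mathbb{Z}_{p^\infty}$, plus a free part of rank $\le 1$, and both stabilize along a descending chain.
\item Put $X_k=p^ke_1T$. Its first column is $p^k\mathbb{Z}$ and its second column is $p^k\mathbb{Z}_{p^\infty}\oplus 0=\mathbb{Z}_{p^\infty}\oplus 0$.
\item Let $h=h_1\oplus h_2\colon X_n\to X_k$ with $k>n$ be a $T$-map. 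Then $u=h_1(p^n)\in p^k\mathbb{Z}$, and compatibility forces $h_2(p^nm)=um$ for all $m\in M$.
\item Taking $m=1/p^k+\mathbb{Z}$ gives $p^nm\neq 0$ but $um=0$.
\end{itemize}
So $X_n\not\rightarrowtail X_k$ for every $k>n$, and $T$ is not right monoartinian. The first columns here are all isomorphic via left multiplication by powers of $p$, so your fallback of restricting to such chains does not help either.

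The mononoetherian version fails in the same way.
\begin{itemize}
\item Let $F$ be a field, $R=F[x_1,x_2,\ldots]$, $S=F$, and $M=F^2$ with basis $v_1,v_2$, where each $x_i$ acts as the matrix unit $E_{12}$ (so $v_2\mapsto v_1\mapsto 0$).
\item $R_R$ is mononoetherian, since any nonzero ideal of a commutative domain embeds in any other. $(M\oplus S)_S=F^3$ is noetherian.
\item Let $A_k=(x_1,\ldots,x_k)$. The right ideals $A_kT$, with second column $Fv_1\oplus 0$, form an ascending chain.
\item For $k>n\ge 1$ we have $k\ge 2$, so any $R$-map $A_k\to A_n$ is multiplication by some $q\in R$ with $qx_k\in A_n$, hence $q\in A_n$.
\item The second component of any $T$-map $A_kT\to A_nT$ therefore sends $v_1=x_1v_2$ to $(qx_1)v_2=0$, because $(x_1,x_2,\ldots)^2$ annihilates $M$. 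So $A_kT\not\rightarrowtail A_nT$.
\end{itemize}

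The paper's own proof consists only of the description of right ideals of $T$ as $J_1\oplus J_2$ with $J_1M\subseteq J_2$. That controls inclusions, which is why the analogous artinian/noetherian statements hold. It says nothing about monomorphisms, so it has exactly the hole you identified. Only the $(\Rightarrow)$ half of the proposition is valid.
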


\begin{proof}
 The proof follows  from the fact that every right ideal of $T$ is of the
form $J_{1} \oplus J_{2} $, where $J_{1} $ is right ideal of $R$, $J_{2}$ is a submodule of $(M \oplus S)_{S}$ and
$J_{1}M \subseteq J_{2}$, by \cite[Proposition 1.17]{Lam}.
\end{proof}

The following example shows that finitely generated modules over a right monoartinian ring  are not necessarily monoartinian. 

\begin{Examp}
{\rm
In the notation of \Cref{upper} and its proof, set
$M =\mathbb{Z}^{(\mathbb{P})}$ (a direct sum of copies of the additive group $\mathbb{Z}$ of integers indexed in the set of all prime numbers $P$) and $R = S = \mathbb{Z}$. By \cite[Proposition 2.12]{Shi} and \Cref{1}, $(M \oplus S)_{S}$ is monoartinian.  Also, clearly 
$R_{R}$ is monoartinian.  Then $T$ is a right monoartinian ring. Now, set $J_{1} = 0$ and
$J_{2}=\oplus_{p \in \mathbb{P}} p\mathbb{Z}$. Then $J_{1} \oplus J_{2}$ is a right ideal of $T$ and the factor module $T/J_{1} \oplus J_{2}$ is not monoartinian, because $\oplus_{p \in \mathbb{P}} \mathbb{Z}/p\mathbb{Z} $ is not monoartinian. This ring $T$ also shows that there
are right monoartinian rings that are not right mononoetherian.
}
\end{Examp}

\begin{Lem}\label{morita}
Being monoartinian {\rm(}mononoetherian{\rm)} is a Morita invariant  property of modules.
\end{Lem}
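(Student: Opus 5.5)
We will use a category equivalence $F:\mathrm{Mod}\text{-}R\to\mathrm{Mod}\text{-}S$ with quasi-inverse $G$. We will show that if $M_R$ is monoartinian (mononoetherian), then so is $F(M)$. By symmetry, applying the same argument to $G$ gives the converse. Since monoartinian and mononoetherian are defined purely by submodules and monomorphisms, the whole proof comes down to checking that $F$ preserves both.

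\textbf{Step 1 (monomorphisms).} An equivalence preserves and reflects monomorphisms. Being a monomorphism is a categorical notion, and in module categories it coincides with being injective. Hence for $R$-modules $A,B$ we have $A\rightarrowtail B$ if and only if $F(A)\rightarrowtail F(B)$.

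\textbf{Step 2 (submodule lattices).} For a fixed module $M$, the assignment $N\mapsto F(N)$, realized as $\operatorname{im}(F(\iota_N))\subseteq F(M)$ where $\iota_N$ is the inclusion, is a lattice isomorphism from the submodules of $M$ onto the submodules of $F(M)$. Its inverse is $K\mapsto$ the image of $G(K)$ in $GF(M)\cong M$. This is the standard fact that equivalences preserve subobject lattices: subobjects are equivalence classes of monomorphisms, $F$ is full and faithful, and every mono into $F(M)$ is isomorphic to $F$ of a mono into $M$ by essential surjectivity. In particular the correspondence is order preserving in both directions. Moreover the submodule $F(N)\subseteq F(M)$ is isomorphic as an $S$-module to $F(N)$ itself, because $F(\iota_N)$ is a monomorphism.

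\textbf{Step 3 (transfer of chains).} Let $K_1\supseteq K_2\supseteq\cdots$ be a descending chain in $F(M)$. By Step 2, $K_i=F(N_i)$ for a descending chain $N_1\supseteq N_2\supseteq\cdots$ in $M$. Since $M$ is monoartinian, there is an $n$ with $N_n\rightarrowtail N_i$ for all $i\ge n$. By Step 1 this gives $K_n\cong F(N_n)\rightarrowtail F(N_i)\cong K_i$ for all $i\ge n$, so $F(M)$ is monoartinian. The ascending (mononoetherian) case is identical with the inclusions reversed.

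The only real obstacle is Step 2. There one must be careful that the image of $F(\iota_N)$ is identified with $F(N)$ up to isomorphism, and that inclusions between submodules correspond to factorizations of the inclusion monomorphisms. Both points follow from fullness and faithfulness of $F$. Alternatively, one can write $F=\mathrm{Hom}_R(P,-)$ for a progenerator $P$ with $S\cong\mathrm{End}(P_R)$. Then $F$ is exact, so it sends the inclusion $N\hookrightarrow M$ to an inclusion $\mathrm{Hom}(P,N)\subseteq\mathrm{Hom}(P,M)$, and $N=\mathrm{Hom}(P,N)P$ recovers $N$ inside $M$.
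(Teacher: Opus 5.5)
Your proof is correct and follows the same route as the paper. The paper's proof is a one-line appeal to Anderson--Fuller, Proposition 21.2, namely the standard behaviour of a category equivalence with respect to monomorphisms and submodule lattices, under which chains of submodules and embeddings between their terms transfer in both directions; you have simply written out the details that this citation leaves implicit.
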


\begin{proof}
The proof follows from \cite[Proposition 21.2]{AF}.
\end{proof}

Similar to the proof of \cite[Theorem 2.1]{Prak},  we have the following proposition.
\begin{Pro}\label{poly}
Let $R$ be a right mononoetherian {\rm(}monoartinian{\rm)} ring. Then $R\left[ x \right]  $  is right mononoetherian {\rm(}monoartinian{\rm)}.
\end{Pro}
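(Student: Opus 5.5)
The plan is to adapt the Hilbert basis theorem argument, as in \cite[Theorem 2.1]{Prak}, replacing equalities of leading-coefficient ideals by embeddings. I treat the mononoetherian case; the monoartinian case is the same argument with descending chains. For a right ideal $I$ of $R[x]$ and $n\geq 0$, let $L_{n}(I)$ be the set of leading coefficients of polynomials of degree $n$ in $I$, together with $0$. Each $L_{n}(I)$ is a right ideal of $R$, and $L_{0}(I)\subseteq L_{1}(I)\subseteq\cdots$ because $I$ is closed under multiplication by $x$. Also $I\subseteq I'$ gives $L_{n}(I)\subseteq L_{n}(I')$. So an ascending chain $I_{1}\subseteq I_{2}\subseteq\cdots$ of right ideals of $R[x]$ yields a doubly indexed array $L_{n}(I_{k})$ that is increasing in both indices.

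\textbf{Step 1 (stabilize the array).} Apply the mononoetherian hypothesis on $R_{R}$ to the diagonal chain $L_{k}(I_{k})$. This gives an index $N_{0}$ with $L_{k}(I_{k})\rightarrowtail L_{N_{0}}(I_{N_{0}})$ for all $k\geq N_{0}$. Then, for each of the finitely many rows $n<N_{0}$, apply the hypothesis to the chain $L_{n}(I_{1})\subseteq L_{n}(I_{2})\subseteq\cdots$. Taking $N$ to be the largest index obtained, we get monomorphisms of right $R$-modules $L_{n}(I_{k})\rightarrowtail L_{n}(I_{N})$ for all $k\geq N$ and all $n$. For $n\geq N_{0}$ these come from the diagonal via the inclusions $L_{n}(I_{k})\subseteq L_{\max(n,k)}(I_{\max(n,k)})$. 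For this large-$n$ range I would use \Cref{mono-comp}, applied to the family $\{L_{n}(I_{k})\}$, to choose the indices so that the target for large $n$ is a single ideal $L_{N'}(I_{N'})$.

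\textbf{Step 2 (lift from leading coefficients to ideals).} For $k\geq N$, use the degree filtration $I_{k}^{(\leq n)}$. The graded pieces $I_{k}^{(\leq n)}/I_{k}^{(\leq n-1)}$ are isomorphic to $L_{n}(I_{k})$, and multiplication by $x$ shifts degree by one. The goal is to assemble the row embeddings into an $R[x]$-monomorphism $I_{k}\rightarrowtail I_{N}$. The natural way is to define the map degree by degree on leading terms, choose preimages, and extend $x$-linearly, so that injectivity follows from injectivity on leading coefficients. A polynomial that maps to $0$ must have leading coefficient mapped to $0$, hence must itself be $0$.

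\textbf{Main obstacle.} Step 2 is the hard part. Embeddings of the graded pieces need not lift to an $R[x]$-linear map, for two reasons. The monomorphisms $f_{n}\colon L_{n}(I_{k})\to L_{n}(I_{N})$ chosen independently for each $n$ need not be compatible with the inclusions $L_{n}\subseteq L_{n+1}$ induced by $x$. In addition, lower-order terms obstruct the lift. To handle the compatibility issue I would first choose the embeddings coherently, using the stabilized diagonal ideal for all $n\geq N_{0}$ and restrictions of one fixed map for the finitely many small $n$. For the lift itself I would try to exploit that $I_{k}$ is generated as an $R[x]$-module by its elements of degree $\leq N_{0}$, so that it suffices to define the map on the finitely many $R$-modules $I_{k}^{(\leq n)}$ for $n\le N_{0}$ and extend. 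If that fails, the fallback is to pass to the associated graded module and argue that an embedding there can be corrected.
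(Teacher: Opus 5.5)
\textbf{Verdict: there is a genuine gap, and it cannot be filled, because the statement is false.}

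\textbf{Step 1 is fine.} It is the same diagonal stabilization the paper uses. For $n\ge N_0$ and $k\ge N\ge N_0$ you have $L_n(I_k)\subseteq L_{\max(n,k)}(I_{\max(n,k)})\rightarrowtail L_{N_0}(I_{N_0})\subseteq L_n(I_N)$.

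\textbf{Step 2 is the gap, and none of your suggestions closes it.}
\begin{itemize}
\item Choosing preimages of leading terms is not additive.
\item $I_k$ need not be generated in degrees $\le N_0$. An embedding $L_n(I_k)\rightarrowtail L_{N_0}(I_{N_0})$ does not stop the row $L_n(I_k)$ from growing as a set when $R$ is not noetherian.
\item ``Correcting'' an embedding of associated graded modules is not an argument.
\end{itemize}
The paper closes this step with $\phi(f)=\sum_i\psi_{nk}(a_i)x^i$. That map has exactly the defects you list: $\psi_{nk}$ depends on $n=\deg f$, so $\phi$ is neither additive nor $x$-linear, and nothing forces $\phi(f)\in I_k$. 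So the paper does not supply the missing idea either.

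\textbf{Counterexample.} Let $V$ be a DVR with uniformizer $\pi$, valuation $\nu$ and quotient field $K$, and put $R=V\ltimes K$.
\begin{itemize}
\item An ideal $I\not\subseteq 0\ltimes K$ contains some $(d,m)$ with $d\neq0$. This element is a non-zero-divisor, so every ideal satisfies $J\subseteq R\cong(d,m)R\subseteq I$.
\item The ideals inside $0\ltimes K$ are $0\ltimes A$ with $A\in\{0,K\}\cup\{\pi^mV : m\in\mathbb{Z}\}$, and the nonzero $\pi^mV$ are all isomorphic.
\item Hence every ascending or descending chain of ideals of $R$ has an embedding tail, so $R$ is both mononoetherian and monoartinian.
\end{itemize}
Now $R[x]\cong V[x]\ltimes K[x]$. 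The ideals inside $0\ltimes K[x]$ are $0\ltimes A$ with $A$ a $V[x]$-submodule of $K[x]$. Any $R[x]$-monomorphism between two nonzero such ideals is multiplication by some $0\neq q\in K(x)$. Let $\nu_s(\sum_j c_jx^j)=\min_j(\nu(c_j)+js)$ be the Gauss valuation.
\begin{itemize}
\item Put $A_n=V[x/\pi^n]$. These ascend, and $\nu_n\ge 0$ on $A_n$.
\item For $i>n$, $\nu_n((x/\pi^i)^j)=-(i-n)j$ is unbounded below on $A_i$.
\item Hence $qA_i\not\subseteq A_n$, that is, $0\ltimes A_i\not\rightarrowtail 0\ltimes A_n$ for all $i>n$. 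So $R[x]$ is not mononoetherian.
\end{itemize}
This isolates your Step 2 exactly. Here $L_j(0\ltimes A_k)\cong V$ for all $j,k$, so the conclusion of Step 1 holds even with isomorphisms, yet no lift exists. Moreover, the row maps $\pi^{-kj}V\to\pi^{-Nj}V$ cannot be chosen compatibly with multiplication by $x$. A compatible family would be multiplication by one $c\in K$ with $\nu(c)\ge(k-N)j$ for all $j$, which is impossible.

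\textbf{The monoartinian case.} ``The same argument with descending chains'' already breaks at Step 1. Over a field $F$, the chain $I_m=x^mF[x]$ gives $L_N(I_N)=F$ but $L_N(I_{N+1})=0$ for every $N$, although $F[x]$ is monoartinian. The statement is also false in this case. The modules $B_n=V[x][x^n/\pi]$ descend, and $\nu_s$ is bounded below on $B_n$ exactly when $s\ge 1/n$. Evaluating at $s=1/i$ then gives $0\ltimes B_n\not\rightarrowtail 0\ltimes B_i$ for $i>n$.
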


\begin{proof}
Assume that $I$ be a right ideal of  $R\left[ x \right]  $. Then define $\phi_{i}(I)$ to be the set of all coefficients of thoes elements of $I$ whoes degree is less than or equal to $i$, $i \geq 0$. For any $i \geq 0$, it is easy to see that $\phi_{i}(I)$ is a right ideal of $R$. By definition, it is clear that if $I_{n} \subseteq I_{m}$, then $\phi_{i}(I_{n}) \subseteq \phi_{i}(I_{m}) $, for any $i \geq 0$. Now, let $I_{0} \subseteq I_{1} \subseteq I_{2} \subseteq \cdots$
 be an ascending chain of right ideals of  $R\left[x\right] $. Then for any $i \geq 0$, $\phi_{i}(I_{0}) \subseteq \phi_{i}(I_{1}) \subseteq \phi_{i}(I_{2}) \subseteq \cdots$
and   $\phi_{0}(I_{0}) \subseteq \phi_{1}(I_{1}) \subseteq \phi_{2}(I_{2}) \subseteq \cdots$
are ascending chains of right ideals of $R$. Since $R$ is right mononoetherian, there exists $p \geq 0$ such that  for any  $t \geq p$, $\phi_{t}(I_{t}) \rightarrowtail \phi_{p}(I_{p})$. Thus we can say that  for any  $i \geq p$ and $j\geq 0$,  $\phi_{i}(I_{j}) \rightarrowtail \phi_{p}(I_{p})$. Also since 
 $\phi_{i}(I_{0}) \subseteq \phi_{i}(I_{1}) \subseteq \phi_{i}(I_{2}) \subseteq \cdots$, is an ascending chain in $R$,  for any $i \geq 0$, there exists  $k_{i}\geq 0$ such that  $\phi_{i}(I_{l}) \rightarrowtail \phi_{i}(I_{k_{i}})$,  for every $l \geq k_{i}$. If  $i \geq p$, then $k_{i}=p$ will work. Thus the set $\lbrace k_{i}\rbrace$ is bounded. Let $k$ be an upper bound of $\lbrace k_{i}\rbrace$. Then   $\phi_{i}(I_{j}) \rightarrowtail \phi_{i}(I_{k})$, for any  $i \geq 0$ and $j \geq k$. Suppose that $\psi_{nk}:\phi_{n}(I_{k+1}) \rightarrow \phi_{n}(I_{k})$ is a monomorphism. For $f(x)=a_{0}+a_{1}x+\cdots+a_{n}x^{n}$, define $\phi: I_{k+1} \rightarrow I_{k}$ by 
 $\phi(f(x))=\psi_{nk}(a_{0})+\psi_{nk}(a_{1})x+ \ldots+\psi_{nk}(a_{n})x^{n}$. It is easy to see that  $\phi$ is a right  $R\left[ x \right] $-monomorphism, as desired.
\end{proof}

\begin{The}
Any mono-artinian module $M$ contains an essential submodule that is a direct sum of compressible modules.
\end{The}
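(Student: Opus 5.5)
We use a Zorn's lemma argument built on the compressibility observation in the Remark after \Cref{mono-comp}. That remark shows that every non-zero monoartinian module contains a non-zero compressible submodule: apply \Cref{mono-comp} to the family of all non-zero submodules to get $L\neq 0$ embedding in each of its non-zero submodules. Every submodule of a monoartinian module is again monoartinian, so every non-zero submodule of $M$ contains a non-zero compressible submodule. The plan is to assemble a maximal independent family of such submodules and show its sum is essential.

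Step 1 is to assume $M\neq 0$ (otherwise the empty sum suffices) and consider the set $\Omega$ of all families $\{C_i\}_{i\in I}$ of non-zero compressible submodules of $M$ whose sum is direct, i.e.\ independent families. Order $\Omega$ by inclusion. It is non-empty by the Remark. A chain in $\Omega$ has its union as an upper bound, because independence is a finitary condition. Zorn's lemma then gives a maximal independent family $\{C_i\}_{i\in I}$; put $C=\oplus_{i\in I}C_i$.

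Step 2 is to show $C\subseteq_{e} M$. Suppose instead that $K\neq 0$ is a submodule of $M$ with $K\cap C=0$. Then $K$ is monoartinian, so by the Remark applied to $K$ there is a non-zero compressible $L\subseteq K$. Since $L\cap C=0$, the family $\{C_i\}_{i\in I}\cup\{L\}$ is still independent, which contradicts maximality. Hence $C$ is an essential submodule of $M$ and a direct sum of compressible modules.

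The main point to check is the Remark's claim that the $L$ given by \Cref{mono-comp} is compressible. That lemma yields $N\in\mathcal{F}$ with $N\rightarrowtail N'$ for every $N'\in\mathcal{F}$ contained in $N$. For $\mathcal{F}$ the set of non-zero submodules, this says exactly that $N$ embeds in each of its own non-zero submodules, which is the definition of compressible.
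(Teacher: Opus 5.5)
Your proposal is correct and follows essentially the same route as the paper: Zorn's lemma on independent families of compressible submodules, with essentiality coming from the fact that every non-zero submodule contains a non-zero compressible one (the Remark after \Cref{mono-comp}, applied to the monoartinian submodule $K$). You also make explicit a step the paper leaves implicit, namely passing from a non-zero $K$ with $K\cap C=0$ to a compressible $L\subseteq K$ that can be adjoined to the family.
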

\begin{proof}
Let  $\mathcal{S}$  denote the set of all families of independent compressible  submodules of  $M$. By Zorn's Lemma, $\mathcal{S}$ has a maximal member  $W= \lbrace  V_{\lambda}~ | ~\lambda \in \Lambda \rbrace$. Since every non-zero submodule of $M$ contains a compressible  module, $V=\oplus_{\lambda \in \Lambda}V_{\lambda}$ is essential  in  $M$.  Otherwise  there exists $N \subseteq M$  such that  $V \cap N=0$ and so   $V \oplus N \subseteq M$, which contradicts the maximality  of   $W$.
\end{proof}

\begin{Lem}
Let $R$  be a commutative ring and $M$ be a  finitely generated mono-artinian module. If  $M\cong M\oplus N$ for some  module  $N$, then $N$ is noetherian.
\end{Lem}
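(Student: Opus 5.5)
The plan is to show the stronger fact that $N=0$. Then $N$ is trivially noetherian, and the mono-artinian hypothesis is never used. The key tool is Vasconcelos' theorem: over a commutative ring, every surjective endomorphism of a finitely generated module is injective. It follows from the Cayley--Hamilton / determinant trick (Nakayama's lemma applied over $R[x]$).

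\emph{Step 1.} Fix an isomorphism $\theta: M \to M\oplus N$, and let $\pi: M\oplus N \to M$ be the canonical projection with kernel $0\oplus N$.

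\emph{Step 2.} Set $f=\pi\theta: M\to M$. It is a surjective $R$-endomorphism of $M$, since $\theta$ is onto and $\pi$ is onto. Its kernel is $\theta^{-1}(0\oplus N)$, so $\ker f\cong N$.

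\emph{Step 3.} Since $R$ is commutative and $M$ is finitely generated, Vasconcelos' theorem applies and gives that $f$ is injective. I would cite this result, or sketch it: regard $M$ as a finitely generated $R[x]$-module with $x$ acting as $f$. Then $xM=M$, so by the determinant trick there is $g(x)\in R[x]$ with $(1-xg(x))M=0$. If $f(m)=0$, then $m=g(f)f(m)=0$.

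\emph{Step 4.} Hence $N\cong\ker f=0$. The zero module is noetherian, so the statement follows.

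There is no real obstacle here. The only point needing care is Step 3, namely that Vasconcelos' theorem genuinely requires commutativity of $R$ and finite generation of $M$; both are hypotheses of the lemma. If one preferred an argument that actually uses the mono-artinian hypothesis, that would be a much harder route, and it is unnecessary given this short argument.
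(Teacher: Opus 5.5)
Your proof is correct, and it takes a genuinely different route from the paper's. Your Step 3 sketch (the determinant trick over $R[x]$) is also sound.

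The paper argues by contradiction. It takes a non-finitely generated submodule $N_1\subseteq N$ and builds a descending chain $M\supseteq K_1\supseteq K_2\supseteq\cdots$ that alternates between copies of $M\oplus N_1$ and copies of $M$. It uses the mono-artinian property to extract a monomorphism $M\oplus N_1\rightarrowtail M$. It then cites an exercise in Lam to get $M\oplus N_1\cong M$, which contradicts the finite generation of $M$.

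You instead compose the isomorphism $M\cong M\oplus N$ with the projection onto $M$. This gives a surjective endomorphism of $M$ with kernel $\cong N$, and Vasconcelos' theorem then forces $N=0$. That conclusion is strictly stronger than the stated one, and it shows the mono-artinian hypothesis is superfluous.

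Your route also avoids two weak points in the paper's argument:
\begin{enumerate}
\item The monomorphism $M\oplus N_1\rightarrowtail M$ that the paper obtains from the chain needs no chain condition at all, since $M\oplus N_1\subseteq M\oplus N\cong M$.
\item The upgrade from that monomorphism to an isomorphism $M\oplus N_1\cong M$ cannot come from Vasconcelos-type results, which concern surjections; injective endomorphisms of finitely generated modules over commutative rings need not be onto. In the present situation the isomorphism holds because $N_1\subseteq N=0$, which is exactly what your argument establishes directly.
\end{enumerate}
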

\begin{proof}
If $N_{1}$ is a submodule of $N$ that is not finitely generated, then $M$ contains a submodule isomorphic to $M \oplus N_{1}$, say  $K_{1}$, which is not  finitely generated. Since  $K_{1} \cong M \oplus N_{1}$, there exists $K_{2} \subseteq K_{1}$ such that    $K_{2}\cong M$. Containing in this manner, we have    
$M \supseteq K_{1} \supseteq K_{2} \supseteq \cdots$,
  such that  for odd indices, $K_{i}\cong  M \oplus N_{1}$  is not finitely generated and for  even indices, $K_{i} \cong M$  is finitely generated. Since $M$ is mono-artinian, there exists an odd  index $n$ such that   $K_{n} \rightarrowtail K_{n+1}$ and so  $M \oplus N_{1} \overset{f}\rightarrowtail M$. Now,  \cite[Exercise 1.10]{Lam} implies that $M \oplus N_{1} \cong M$,  a contradiction.
\end{proof}

\begin{Pro}
Let $R$ be a right non-singular ring with maximal right quotient ring $Q$. Let
$M$ be a right $Q$-module that is a non-singular $R$-module and an monoartinian (mononoetherian)
$R$-module. Then $M$ is monoartinian (mononoetherian) as a $Q $-module.
\end{Pro}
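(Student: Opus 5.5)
The proof rests on two standard facts about a right non-singular ring $R$ with maximal right quotient ring $Q$, and on a module $M$ that is a right $Q$-module and non-singular as an $R$-module.

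\textbf{Fact 1.} Every $R$-submodule $N$ of $M$ has an $R$-essential closure $NQ$, which is a $Q$-submodule. Here $NQ$ is the $Q$-submodule generated by $N$, and $N \subseteq_{e} NQ$ as $R$-modules. We will only apply it to $N$ that are already $Q$-submodules, so that $NQ=N$.

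\textbf{Fact 2.} For non-singular modules over $Q$, every $R$-homomorphism between $Q$-modules that are non-singular over $R$ is automatically a $Q$-homomorphism. More precisely, $\mathrm{Hom}_{R}(A,B)=\mathrm{Hom}_{Q}(A,B)$ whenever $A$ and $B$ are right $Q$-modules that are non-singular over $R$. The argument goes as follows. Take $f\in\mathrm{Hom}_R(A,B)$, $a\in A$, $q\in Q$. The right ideal $D=\{r\in R : qr\in R\}$ is essential in $R_R$, since $R_R\subseteq_e Q_R$. For $r\in D$ we compute
$$\bigl(f(aq)-f(a)q\bigr)r = f(a(qr)) - f(a)(qr) = 0,$$
using $R$-linearity and $qr\in R$. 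So $f(aq)-f(a)q$ is annihilated by an essential right ideal, hence lies in $\Z(B_R)=0$.

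\textbf{Monoartinian case.} Let $M_1\supseteq M_2\supseteq\cdots$ be a descending chain of $Q$-submodules of $M$. Each $M_i$ is in particular an $R$-submodule, so this is also a descending chain of $R$-submodules. Since $M_R$ is monoartinian, there is an $n$ with $R$-monomorphisms $f_i:M_n\rightarrowtail M_i$ for all $i\ge n$. Each $M_i$ is a $Q$-module and is non-singular over $R$, being an $R$-submodule of $M$. By Fact 2, each $f_i$ is $Q$-linear, so $M_n$ embeds in $M_i$ as $Q$-modules. Hence $M_Q$ is monoartinian.

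\textbf{Mononoetherian case.} The argument is identical with an ascending chain: there is an $n$ with $R$-monomorphisms $M_i\rightarrowtail M_n$ for all $i \ge n$, and Fact 2 makes these $Q$-monomorphisms.

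The key and only nontrivial step is Fact 2. Its potential obstacle is that $Q$ might not be generated over $R$ in an obvious way. The essential right ideal $D=(R:q)$ circumvents this, and non-singularity of the target kills the error term $f(aq)-f(a)q$. Fact 1 is not actually needed, since the chains under consideration already consist of $Q$-submodules.
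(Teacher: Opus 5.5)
Your proposal is correct and follows essentially the same route as the paper. The paper also views the chain of $Q$-submodules as a chain of $R$-submodules, then shows the resulting $R$-monomorphisms are $Q$-linear by killing $f(tq)-f(t)q$ with the essential right ideal $\{r\in R : qr\in R\}$ and invoking non-singularity of $M$. Your Fact 1 is true but, as you yourself note, not needed.
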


\begin{proof}
Let $M \supseteq M_{1} \supseteq M_{2} \supseteq \cdots $ be a descending chain of $Q$-submodules of $M$. Then this
chain is also a descending chain of $R$-submodules of $M$ and thus there exists $n \geq 1$ such that $M_{n} \overset{f_{k}}\rightarrowtail M_{k}$ as $R$-modules for any $k \geq n$.
 If $q \in Q$, then there exists an essential
right ideal $I$ of $R$ such that $qI \subseteq R$. Then, for every $t \in M_{n}$ and $e \in I$, we have 
$f_{k}(tqe) =f_{k}(tq)e$
and $f_{k}(tqe) = f_{k}(t)qe$, so $(f_{k}(tq) - f_{k}(t)q)e = 0$, i.e., $(f_{k}(tq) - f_{k}(t)q)I= 0$. Since $M$ is
non-singular,   $f_{k}(tq) = f_{k}(t)q$. Therefore $f_{k}$ is a $Q$-monomorphism. 
\end{proof}

\begin{Cor}
If $R$ is a commutative domain  with the field of fractions $Q$ and $M$ is a  monoartinian  $R $-module, then  $M$ is  isoartinian as a $Q $-module.
\end{Cor}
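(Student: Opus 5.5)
The plan is to reduce to the preceding proposition and then use the fact that over a field, mutual embeddability forces isomorphism. I read the statement, as in the preceding proposition, with $M$ a $Q$-module (that is, a $Q$-vector space) that is monoartinian when regarded as an $R$-module. Making this hypothesis explicit is the main point needing care. Without it, "as a $Q$-module" only makes sense after passing to $M\otimes_R Q$, and the argument would then need a separate comparison of chains.

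\emph{Step 1: verify the hypotheses of the preceding proposition.}
\begin{itemize}
\item A commutative domain $R$ is right non-singular, since a nonzero element cannot annihilate an essential ideal.
\item Its maximal right quotient ring is its classical quotient ring, namely the field of fractions $Q$.
\item The $Q$-module $M$ is non-singular as an $R$-module. Indeed, if $mI=0$ for some nonzero ideal $I$ and some $m\in M$, pick $0\neq a\in I$. Then $m=(ma)a^{-1}=0$ in the $Q$-vector space $M$, so $\Z(M_R)=0$.
\end{itemize}
The preceding proposition then shows that $M$ is monoartinian as a $Q$-module.

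\emph{Step 2: upgrade embeddings to isomorphisms over $Q$.} Let $M\supseteq M_1\supseteq M_2\supseteq\cdots$ be a descending chain of $Q$-subspaces. By Step 1 there is an $n$ with $M_n\rightarrowtail M_k$ for every $k\geq n$. For $k\ge n$ we also have $M_k\subseteq M_n$, so $\dim_Q M_k\le\dim_Q M_n$. The monomorphism gives $\dim_Q M_n\le \dim_Q M_k$. By the Cantor--Schr\"oder--Bernstein theorem for cardinals, $\dim_Q M_n=\dim_Q M_k$. Two vector spaces of equal dimension are isomorphic, so $M_n\cong M_k$ for all $k\geq n$, and the chain stabilizes up to isomorphism. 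Hence $M$ is isoartinian as a $Q$-module.
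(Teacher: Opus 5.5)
Your proof is correct and follows the route the paper intends. The corollary is stated without proof right after the proposition on non-singular $Q$-modules, and you check that $R$ is non-singular with maximal quotient ring $Q$ and that $M_R$ is non-singular, apply that proposition, and then use dimension over the field $Q$ to upgrade $M_n\rightarrowtail M_k$ with $M_k\subseteq M_n$ to $M_n\cong M_k$. One side remark: cardinals are well-ordered, so the dimensions along any descending chain of $Q$-subspaces are eventually constant; hence every $Q$-vector space is isoartinian, and neither the monoartinian hypothesis nor your Step 1 is actually needed for the conclusion.
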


\begin{Pro}\label{direct product}
Let $R$ be a ring and $M=\oplus_{1}^{n} M_{i}$, where $M_{i}$'s are right  $R$-modules and $M$ is a duo module. Then $M$ is mononoetherian (monoartinian) if and only if  each $M_{i}$ is mononoetherian (monoartinian). 
\end{Pro}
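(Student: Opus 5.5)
The forward direction is immediate from the Remark following \Cref{mono-comp}: every submodule of a monoartinian (mononoetherian) module is again monoartinian (mononoetherian), and each $M_{i}$ is a submodule of $M$. All the work is in the converse. I will write out the mononoetherian case; the monoartinian case is the same argument with descending chains.

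The key structural fact is the following. Since $M$ is duo, every submodule $K$ of $M$ is fully invariant. Hence $e_{i}K\subseteq K$ for the canonical idempotents $e_{i}\in{\rm End}(M)$ attached to the decomposition $M=\oplus_{1}^{n} M_{i}$. Therefore
$$K=\oplus_{i=1}^{n}(K\cap M_{i}).$$

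Now take an ascending chain $K_{1}\subseteq K_{2}\subseteq\cdots$ of submodules of $M$ and put $K_{j}^{(i)}=K_{j}\cap M_{i}$. For each fixed $i$, the sequence $K_{1}^{(i)}\subseteq K_{2}^{(i)}\subseteq\cdots$ is an ascending chain in $M_{i}$. By hypothesis there is $n_{i}$ and a monomorphism $f_{t}^{(i)}:K_{t}^{(i)}\rightarrowtail K_{n_{i}}^{(i)}$ for every $t\geq n_{i}$.

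The point needing care is to pass to a common index $N=\max\{n_{1},\ldots,n_{n}\}$. We need $K_{t}^{(i)}\rightarrowtail K_{N}^{(i)}$ for $t\geq N$. This holds because we may compose $f_{t}^{(i)}$ with the inclusion $K_{n_{i}}^{(i)}\subseteq K_{N}^{(i)}$; in the monoartinian case the corresponding step uses $K_{N}^{(i)}\subseteq K_{n_{i}}^{(i)}$, so there we instead compose $K_{N}^{(i)}\hookrightarrow K_{n_{i}}^{(i)}\rightarrowtail K_{t}^{(i)}$. Finiteness of the decomposition is exactly what makes this maximum (respectively, common index) exist.

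Finally, for $t\geq N$, the direct sum map
$$\oplus_{i}f_{t}^{(i)}:\ \oplus_{i}K_{t}^{(i)}=K_{t}\ \rightarrowtail\ \oplus_{i}K_{N}^{(i)}=K_{N}$$
is a monomorphism, since it is a direct sum of monomorphisms. This gives the mononoetherian property of $M$; the monoartinian case follows in the same way.

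The only real obstacle is the decomposition $K=\oplus(K\cap M_{i})$, and the duo hypothesis supplies it exactly. Without it, a submodule need not decompose along the summands. Indeed, the Remark after the semisimple proposition, with $(\oplus_{p}\mathbb{Z}_{p})\oplus\mathbb{Z}$, shows that the statement fails in general.
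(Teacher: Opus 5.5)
Your proof is correct and is essentially the paper's argument: duo gives $K=\oplus_{i}(K\cap M_{i})$, you apply the hypothesis in each $M_{i}$, and you take the direct sum of the resulting monomorphisms. You are in fact more careful than the paper, which silently uses one index $k$ for all $j$, whereas you justify passing to $N=\max\{n_{1},\ldots,n_{n}\}$ in both the ascending and descending cases. One correction to your closing aside: $(\oplus_{p}\mathbb{Z}_{p})\oplus\mathbb{Z}$ does not show that the duo hypothesis is needed, because its summand $\oplus_{p}\mathbb{Z}_{p}$ is itself neither mononoetherian nor monoartinian, so the hypothesis of the converse already fails there.
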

\begin{proof}
Assume that  every $M_{i}$ is mononoetherian and $N_{1} \subseteq N_{2} \subseteq \cdots$ is an ascending chain of submodules  of $M$. Since $M$ is duo, $N_{i} = \oplus_{j=1}^{n} (N_{i} \cap M_{j}) $ for each $i$ and so for any $1 \leq j \leq n$ there exists $k \geq 1$ such that $N_{l} \cap M_{j} \rightarrowtail N_{k} \cap M_{j}$, for every $l \geq k$. Therefore, $N_{l} = \oplus_{j=1}^{n} (N_{l} \cap M_{j}) \rightarrowtail  \oplus_{j=1}^{n} (N_{k} \cap M_{j}) = N_{k} $, i.e, $M$ is mononoetherian. Converse is clear.
\end{proof}

\begin{Rem}
\rm{
 The finite condition  in above proposition is necessary. For example the $\mathbb{Z}$-module $\oplus_{p \in P} \mathbb{Z}_{p^{\infty}}$ is not mononoetherian (monoartinian).}
\end{Rem}

\begin{Pro}
A right monoartinian ring $R$ is semiprime  if and only if  the  intersection of the annihilators  of the compressible  right  $R$-modules is zero. 
\end{Pro}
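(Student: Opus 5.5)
Both implications rest on the Remark after \Cref{mono-comp}: every non-zero submodule of a monoartinian module contains a compressible submodule.

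For the direction ``semiprime $\Rightarrow$ intersection zero'', I would argue by contraposition. Let $A = \bigcap {\rm Ann}_R(C)$, where $C$ runs over the compressible right $R$-modules; $A$ is a two-sided ideal. Assume $A \neq 0$. Since $R_R$ is monoartinian, so is its submodule $A_R$. The Remark then gives a non-zero right ideal $L \subseteq A$ that is compressible as a right $R$-module. By definition of $A$, we have $LA=0$, and since $L \subseteq A$, this gives $L^2 \subseteq LA = 0$. So $L$ is a non-zero nilpotent right ideal, and $R$ is not semiprime.

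For the converse, assume the intersection is zero and let $I$ be a right ideal with $I^2=0$. We must show $I=0$. Suppose instead $I \neq 0$. I plan to produce a compressible module $C$ with $IC \neq 0$; more precisely, since these are right modules, $C$ with $CI \neq 0$, so that $I \not\subseteq {\rm Ann}_R(C)$. This alone is not a contradiction, so I would rather show directly that the two-sided ideal $RI$ annihilates every compressible module. Note $(RI)^2 = RIRI$ need not vanish, so this is the main obstacle.

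Take a compressible $C$ and let $N = \{c \in C : cI = 0\}$, a submodule of $C$. If $c \notin N$, then $0 \neq cI$ is a submodule with $(cI)I = 0$, so $cI \subseteq N$ and $N \neq 0$. By compressibility, $C \rightarrowtail N$ via some monomorphism $g$. Then $g(cI) = g(c)I = 0$, since $g(c) \in N$. Injectivity of $g$ forces $cI = 0$, so $N = C$ and $CI = 0$. This works for every right ideal $I$ with $I^2 = 0$, with no need to pass to $RI$.

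Hence $I$ lies in the intersection, so $I=0$ and $R$ is semiprime. The monoartinian hypothesis is not used in this direction.
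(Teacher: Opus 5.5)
Your forward direction is the paper's argument: a compressible right ideal $L\subseteq A$ gives $L^2\subseteq LA=0$.

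The converse has a step that fails as written. You assert that $N=\{c\in C : cI=0\}$ is a submodule of $C$ and then use compressibility to get $C\rightarrowtail N$. But $I$ is only a right ideal. For $c\in N$ and $r\in R$ you have $(cr)I=c(rI)$, and $rI\not\subseteq I$ in general, so $N$ is only an additive subgroup; it is a submodule when $I$ is a left ideal. Concretely, take $R=M_2(A)$ with $A=k[\epsilon]/(\epsilon^2)$, let $I$ be the matrices with first row in $\epsilon A$ and second row zero, and let $C=A^2$ (row vectors). Then $I^2=0$ and $N=\epsilon A\times A$, but $(0,1)e_{21}=(1,0)\notin N$. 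For a compressible $C$ the set $N$ does turn out to be a submodule, but only because $N=C$, which is what you are trying to prove. Compressibility can only be applied to a subset already known to be a non-zero submodule.

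The repair follows from your own observation that $cI$ is a non-zero submodule: apply compressibility to $cI$ rather than to $N$. If $g\colon C\rightarrowtail cI$, then $g(c)=cx$ for some $x\in I$, so $g(cy)=cxy=0$ for every $y\in I$. Thus $g(cI)=0$, and $cI=0$ by injectivity. Applying compressibility to $CI$ instead is exactly the paper's proof: it takes an ideal $K$ with $K^2=0$ and a monomorphism $f\colon M\rightarrowtail MK$, and gets $f(MK)=f(M)K\subseteq MK^2=0$. Alternatively, since semiprimeness only requires excluding non-zero two-sided ideals of square zero, you could take $I$ two-sided from the start. Then $N$ is a genuine submodule and your argument goes through verbatim. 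You are right that the monoartinian hypothesis is not used in this direction.
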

\begin{proof}
Let $I$ be the intersection of the  annihilators of all compressible  right  $R$-modules. Assume that $R$ is semiprime  and  $I \neq 0$.  Since $I_{R}$  is monoartinian, it contains a compressible  right ideal  $C$  of $R$. Then $C \subseteq {\rm ann}(C)$ and so   $C=0$, a contradiction.  For the converse, suppose that   $I=0$ and $K$ is  an ideal of $R$ such that $K^{2}=0$. If $M$  is a compressible right module such that  $MK \neq 0$, then  $M \overset{f}\rightarrowtail MK$ and so  $M \cong f(M) \subseteq MK$. Thus  $f(M) K \subseteq MK^{2}=0$ and so  $f(MK)=0$, a contradiction. Therefore, $MK=0$  and so  $K \subseteq I=0$.
\end{proof}

\begin{Pro}\label{nonsingular}
A semiprime right  monoartinian ring is right non-singular.
\end{Pro}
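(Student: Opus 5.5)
We argue by contradiction. Let $R$ be a semiprime right monoartinian ring, and suppose $\Z(R_{R})\neq 0$. Then $\Z(R_{R})$ is a nonzero (two-sided) ideal of $R$, and in particular a nonzero right ideal. Submodules of monoartinian modules are monoartinian, so by the Remark after \Cref{mono-comp} there is a nonzero compressible right ideal $C\subseteq \Z(R_{R})$. Compressible means that $C$ embeds in every nonzero submodule of itself.

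The aim is to produce a nonzero right ideal inside $C$ that is not singular, using semiprimeness. Since $R$ is semiprime and $C\neq 0$, we have $C^{2}\neq 0$; indeed $CR$ is a nonzero right ideal, so $(CR)^2\neq 0$, which forces $C\cdot C\neq 0$. Hence there is $c\in C$ with $cC\neq 0$, and $cC$ is a nonzero submodule of $C$. Choose a monomorphism $f\colon C\rightarrowtail cC$. Every element of $C$ is singular, so in particular $c$ is, and ${\rm r.Ann}_R(c)$ is an essential right ideal. On the other hand, $cC$ is the image of the map $C\to cC$, $x\mapsto cx$, whose kernel is $C\cap {\rm r.Ann}_R(c)$. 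Because ${\rm r.Ann}_R(c)$ is essential in $R_R$, this kernel $K$ is essential in $C$. So $C/K\cong cC$, and $C$ embeds in $C/K$ with $K\subseteq_e C$.

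The remaining step is to derive a contradiction from this embedding, and this is the main obstacle. My first attempt is to iterate. The embedded copy $f(C)\subseteq cC$ is again compressible and contained in $\Z(R_R)$, so repeating the construction gives a descending chain of submodules $C\supseteq f(C)\supseteq\cdots$, each isomorphic to a factor of the previous one by an essential submodule. I would then try to use the monoartinian condition on a cleverly chosen chain to force some map to be zero.

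A cleaner route, which I would try first, is the standard semiprime argument applied to an ideal. Let $I=RC$, the two-sided ideal generated by $C$. Since $C\subseteq \Z(R_R)$ and $\Z(R_R)$ is an ideal, $I\subseteq \Z(R_R)$. For a semiprime ring one has $\Z(R_R)\cap {\rm l.Ann}_R(\Z(R_R))=0$, and one wants $\Z(R_R)$ to annihilate itself on a large piece. Compressibility of $C$ supplies this: take $0\neq x\in C$ and the embedding $C\rightarrowtail xR$. Since $x$ is singular, $xR\cong R/{\rm r.Ann}_R(x)$ with ${\rm r.Ann}_R(x)$ essential. Then $C$, being isomorphic to a submodule $L/{\rm r.Ann}_R(x)$, has the property that every element $y\in C$ corresponds to some $\ell\in L$ whose annihilator contains an essential right ideal $E$ with $\ell E\subseteq {\rm r.Ann}_R(x)$, so $x\ell E=0$. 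I would then use $C\cdot C\neq 0$ together with this to build a nonzero nilpotent right ideal $xC$ with $(xC)^2=0$, which contradicts semiprimeness. Verifying that this square actually vanishes is where I expect the real work. If it fails, I would fall back on the known fact that in a semiprime ring, $\Z(R_R)\neq 0$ contains a nonzero right ideal $T$ with $T^2=0$ modulo a suitable essential annihilator, and combine it with the descending chain from the iteration step.
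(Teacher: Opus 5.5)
Your setup is correct: you have a compressible right ideal $C\subseteq \Z(R_R)$, an element $c\in C$ with $cC\neq 0$, and $K=C\cap{\rm r.Ann}_R(c)\subseteq_e C$ with $C\rightarrowtail cC\cong C/K$. But the proposal stops exactly where the proof has to happen, so there is a genuine gap: no contradiction is ever obtained.

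\begin{itemize}
\item You say yourself that deriving a contradiction from $C\rightarrowtail C/K$ is ``the main obstacle''.
\item The iteration idea cannot help. In a compressible module every nonzero submodule embeds in every other one. So any chain inside $C$ (such as $C\supseteq f(C)\supseteq\cdots$) satisfies the monoartinian condition trivially and yields nothing.
\item The ``cleaner route'' via $I=RC$ ends with the unverified hope that $(xC)^2=0$. Since $xC$ is a right ideal of a semiprime ring, $(xC)^2=0$ is equivalent to $xC=0$, so asserting it is asserting the whole result.
\item The fallback ``known fact'' is not a precise statement one could cite.
\end{itemize}

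The paper's proof embeds $C$ differently. It maps $C$ into $K=C\cap{\rm r.Ann}_R(x)$ itself, which is nonzero because it is essential in $C$, so that $xf(C)=0$. It then uses $f(cx)=f(c)x$ and injectivity of $f$ to turn $f(C)x=0$ into $Cx=0$, whence $(xC)^2=xCxC=0$ and $xC=0$. That reduction is the idea missing from your outline.

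Be aware, though, that the paper gets $f(C)x=0$ from $(f(C)x)^2=0$ ``since $R$ is semiprime'', and this inference is not valid as stated, because $f(C)x$ is not a one-sided ideal. For example, in $M_2(k)$ take $x=e_{11}$ and $L=e_{22}M_2(k)$. Then $xL=0$ and $(Lx)^2=0$, but $Lx\neq 0$.

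In fact, in any semiprime ring $C\cap{\rm r.Ann}_R(C)=0$, since it is a right ideal of square zero. So for $0\neq x\in C$, the equality $f(C)x=0$ (equivalently $Cx=0$) is already the contradiction itself. By semiprimeness it holds if and only if $xRf(C)=0$, which does not follow from the available $xf(C)=0$ alone.

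So your suspicion that the vanishing of this square is where the real work lies is right. To finish, you would need a genuine argument for $xRf(C)=0$, or some use of the monoartinian hypothesis on $R_R$ beyond the mere existence of a compressible $C$. The paper does not supply either.
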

\begin{proof}
Suppose that   $\mathcal{Z}(R_{R}) \neq 0$. Then there exists  a compressible right  ideal    $C \subseteq \mathcal{Z}(R_{R})$. If   $C^{2} \neq 0$, then  there exists    $x \in C$ such that  $xC \neq 0$  and so    $xC \cong C \slash {\rm r.ann}(x) \cap C$.  If   ${\rm r.ann}(x) \cap C \neq 0$, then $C \overset{f}\rightarrowtail {\rm r.ann}(x) \cap C$  and  so    $C \cong f(C) \subseteq {\rm r.ann}(x) \cap C \subseteq {\rm r.ann}(x)$. Then   $xf(C)=0$ and so $(f(C) x)^{2}=0$.  This implies that $f(C)x=0$, since $R$ is semiprime. Hence  $Cx=0$ and so $(xC)^{2}=0$. This implies that  $xC=0$, a contradiction. Hence $ {\rm r.ann}(x) \cap C =0$ which is a contradiction,
 since   ${\rm r.ann}(x) \subseteq_{e} R_{R}$. Therefore,   $\mathcal{Z}(R_{R}) = 0$.
\end{proof}

The following corollary that state in  \cite[Lemma 4.3]{Fa}, immediately follows from above proposition.

\begin{Cor}
A semiprime right isoartinian ring is right non-singular.
\end{Cor}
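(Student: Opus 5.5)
The corollary follows from \Cref{nonsingular} once we know that a right isoartinian ring is right monoartinian. Recall that $M$ is \emph{isoartinian} if every descending chain $M_{1} \supseteq M_{2} \supseteq \cdots$ of submodules stabilizes up to isomorphism: there is $n \geq 1$ with $M_{i} \cong M_{n}$ for all $i \geq n$. So the plan has two steps: show that isoartinian implies monoartinian, and then apply \Cref{nonsingular}.

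For the first step, let $R$ be right isoartinian and take a descending chain $I_{1} \supseteq I_{2} \supseteq \cdots$ of right ideals. By hypothesis there is $n$ such that $I_{n} \cong I_{k}$ for every $k \geq n$. Each such isomorphism is in particular an $R$-monomorphism $I_{n} \rightarrowtail I_{k}$. This is exactly the defining condition for $R_{R}$ to be monoartinian, so $R$ is right monoartinian.

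For the second step, $R$ is semiprime by hypothesis and right monoartinian by the first step. \Cref{nonsingular} then gives $\mathcal{Z}(R_{R}) = 0$, so $R$ is right non-singular.

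Neither step is a real obstacle. The only point to check is that the isoartinian condition is read with the same indexing as the monoartinian one: a single index $n$ that works for all $k \geq n$. With that reading, the implication is immediate from the definitions.
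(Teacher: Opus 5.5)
Your proof is correct and follows the paper's route: the paper derives this corollary directly from \Cref{nonsingular}. The only step it leaves implicit is the one you spell out, namely that a right isoartinian ring is right monoartinian because each isomorphism $I_n \cong I_k$ is in particular a monomorphism $I_n \rightarrowtail I_k$; and your reading of isoartinian, with a single index $n$ working for all $k \geq n$, is the standard definition.
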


\begin{Lem}
Let $R$ be a semiprime ring and $C$ be a  compressible  right ideal of  $R$.  If $I_{C}$ is the sum of all right ideals  of $R$ isomorphic to  $C$, then $I_{C}$ is a two-sided  ideal  of $R$.
\end{Lem}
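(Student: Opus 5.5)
Since $I_C$ is by definition a sum of right ideals, it is itself a right ideal. So the only thing to prove is that $rI_C\subseteq I_C$ for every $r\in R$. Because $I_C=\sum\{D : D\le R_R,\ D\cong C\}$, it suffices to show that $rD\subseteq I_C$ for each right ideal $D\cong C$ and each $r\in R$.

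The key observation is that left multiplication by $r$ gives a right $R$-module homomorphism $\lambda_r\colon D\to R$, $d\mapsto rd$. Its image is $rD$, so $rD\cong D/(D\cap {\rm r.Ann}_R(r))$. If $rD=0$ there is nothing to show. Otherwise, write $K=\ker\lambda_r=D\cap{\rm r.Ann}_R(r)$ and split into two cases.

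\textbf{Case $K=0$.} Then $rD\cong D\cong C$, so $rD$ is one of the summands of $I_C$ and $rD\subseteq I_C$.

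\textbf{Case $K\neq 0$.} Here we use the semiprime hypothesis, following the argument of \Cref{nonsingular}, to reach a contradiction.
\begin{enumerate}[(i)]
\item Since $D\cong C$ is compressible and $0\neq K\subseteq D$, there is a monomorphism $g\colon D\rightarrowtail K$. Put $D'=g(D)$. Then $D'\cong C$ and $rD'\subseteq rK=0$.
\item We want to transfer this annihilation back to $D$, using only the semiprime condition. Consider $(D'r)R$, which is an ideal-like object. From $rD'=0$ we get $(D'r)(D'r)=D'(rD')r=0$. Semiprimeness applied to the right ideal $D'rR$ then gives $(D'rR)^2\subseteq D'(rD')R=0$, hence $D'rR=0$ and so $D'r=0$.
\item The remaining task is to pass from $D'r=0$ to $Dr=0$ and then to $rD=0$. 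I would attempt this by showing the annihilation condition $\{r : rD=0\}$ is an isomorphism invariant among right ideals in the class of $C$. One way to get this is to use \Cref{nonsingular}: if $R$ is monoartinian it is non-singular, so isomorphisms between right ideals extend to left multiplications via the maximal quotient ring.
\end{enumerate}

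The main obstacle is step (iii), namely deducing $rD=0$ (a contradiction with $rD\neq 0$) from information about an isomorphic copy $D'$. If that transfer fails, the fallback is weaker but still suffices. Since $K\neq 0$ and $D$ is compressible, $D$ embeds in $K$, and the exact sequence $0\to K\to D\to rD\to 0$ then shows that $rD$ is a homomorphic image of a copy of $C$ with nonzero kernel. I would then try to show directly that $rD$ contains a nonzero submodule isomorphic to $C$ and argue by a sum-of-images decomposition; the cleanest route is the quotient-ring extension from step (iii), and I expect that step to carry the difficulty.
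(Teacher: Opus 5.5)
\textbf{There is a genuine gap.} Your reduction to $rD\subseteq I_C$, your case $K=0$, and step (i) are fine, and this is the same route the paper takes. The argument breaks at step (ii). The inclusion $(D'rR)^2\subseteq D'(rD')R$ silently uses $RD'\subseteq D'$, but $D'$ is only a right ideal, so $(D'rR)^2=D'r\,(RD')\,rR$. In fact $rD'=0$ does not force $D'r=0$ in a semiprime ring. In $M_2(k)$ take $D'=e_{11}R$ and $r=e_{22}$: then $rD'=0$ but $D'r=ke_{12}\neq 0$.

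By contrast, step (iii), which you single out as the main obstacle, is immediate and needs no quotient ring. Since $g$ is right $R$-linear and $Dr\subseteq D$, we have $D'r=g(D)r=g(Dr)$. So $D'r=0$ gives $Dr=0$, hence $(rD)^2=r(Dr)D=0$, and $rD=0$ by semiprimeness. This is exactly how the paper concludes. Your detour through \Cref{nonsingular} and the maximal quotient ring would in any case need $R$ to be right monoartinian, which is not a hypothesis here. Your fallback of finding a copy of $C$ inside $rD$ would not give $rD\subseteq I_C$ either.

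\textbf{The case $K\neq 0$ cannot be excluded.} No repair of (ii) can rescue the strategy, because the case $K\neq 0$, $rD\neq 0$ is not contradictory. Let $S=k\langle a,b\rangle$ be the free algebra and $R=M_2(S)$, which is a prime ring.

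First, $R_R$ is compressible. Let $0\neq n\in R$ have nonzero $j$-th column $c$, and put $z=e_{j1}a+e_{j2}b$. Then $nz$ is the column $c$ times the row $(a,b)$. It has zero right annihilator because $S$ is a domain and $aS\cap bS=0$. Hence $\lambda_{nz}$ embeds $R$ into every right ideal containing $n$.

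Now take $C=D=R$ and $r=e_{11}$. Then $rD=e_{11}R\neq 0$ and $K={\rm r.ann}(e_{11})=e_{22}R\neq 0$. Moreover, any monomorphism $g\colon D\to K$ has image $D'=yR$ with $0\neq y\in e_{22}R$, so $D'r=yRe_{11}\neq 0$ by primeness.

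\textbf{The paper's proof has the same flaw.} It makes the same move as your (ii): ``$(f(J)x)^2=0$, hence $f(J)x=0$ since $R$ is semiprime''. Its claim that ${\rm r.ann}(x)\cap J=0$ whenever $xJ\neq 0$ is refuted by the same example. The lemma's conclusion survives there only because $I_C=R$.

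\textbf{What a correct proof needs.} It must show $rD\subseteq I_C$ in the case $K\neq 0$ rather than rule that case out. One sufficient device: suppose some monomorphism $\phi\colon D\to R$ satisfies $\phi(D)\cap rD=0$. Then $\lambda_r|_D+\phi$ is again a monomorphism, and $rD\subseteq(\lambda_r|_D+\phi)(D)+\phi(D)\subseteq I_C$.
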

\begin{proof}
Let $x \in R$ and  $J \subseteq R_{R}$ such that   $J \cong C$. To show that  $I_{C}$  is a two-sided  ideal of $R$, it is suffices to prove that   $xJ \subseteq I_{C}$. If   $xJ \neq 0$, then  clearly   $xJ \cong J \slash {\rm r.ann}(x)  \cap J $.  we claim that   $ {\rm r.ann}(x)  \cap J=0$. for if not, $J \overset{f}\rightarrowtail  {\rm r.ann}(x)  \cap J $ and so $  J \cong f(J) \subseteq {\rm r.ann}(x)$. Thus $xf(J)=0$ and so $(f(J) x)^{2}=0$. Hence $f(J) x=0$, since $R$ is semiprime. This implies that $Jx=0$ and so $(xJ)^{2}=0$. This implies that $xJ=0$,  contradiction. Therefore, $ {\rm r.ann}(x)  \cap J=0$  and so $xJ \cong J$, i.e, $xJ \subseteq I_{C}$.
\end{proof}

If $A$ is an ideal in a semiprime ring $R$, then $ {\rm r.ann}(A)= {\rm l.ann}(A)$ ($=  {\rm ann}(A)$, say).
By annihilator ideal, we mean an ideal of form $ {\rm ann}(A)$ for some ideal $A$ of $R$. See \cite[Proposition 2.2.14, p. 55]{Mc}.

\begin{Lem}
Let $R$  be a semiprime right monoartinian ring. Then $R$ satisfies the Acc on annihilator ideals.
\end{Lem}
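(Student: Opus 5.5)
Suppose, to get a contradiction, that there is a strictly ascending chain of annihilator ideals
$$\mathrm{ann}(A_{1}) \subsetneq \mathrm{ann}(A_{2}) \subsetneq \cdots$$
in the semiprime ring $R$. Since $R$ is semiprime, $\mathrm{ann}(\mathrm{ann}(\mathrm{ann}(A)))=\mathrm{ann}(A)$. Set $B_{i}=\mathrm{ann}(\mathrm{ann}(A_{i}))$. Taking annihilators reverses inclusions, so
$$B_{1} \supsetneq B_{2} \supsetneq \cdots$$
is a strictly descending chain of annihilator ideals. The inclusions stay strict because $\mathrm{ann}(B_{i})=\mathrm{ann}(A_{i})$. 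The plan is therefore to show that a semiprime right monoartinian ring has DCC on annihilator ideals; this is equivalent to the ACC, since $\mathrm{ann}$ is an inclusion-reversing bijection on annihilator ideals.

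Next I would split the descending chain into orthogonal pieces. Put $C_{i}=B_{i}\cap \mathrm{ann}(B_{i+1})$, a two-sided ideal. Each $C_{i}$ is nonzero: if $C_{i}=0$, then $B_{i}\cdot\mathrm{ann}(B_{i+1})\subseteq C_{i}=0$. Hence $\mathrm{ann}(B_{i+1})\subseteq \mathrm{ann}(B_{i})$, and the reverse inclusion holds because $B_{i+1}\subseteq B_{i}$, so $\mathrm{ann}(B_{i+1})=\mathrm{ann}(B_{i})$ and therefore $B_{i}=B_{i+1}$, a contradiction. The $C_{i}$ are also pairwise orthogonal: for $j>i$ we have $C_{j}\subseteq B_{j}\subseteq B_{i+1}$, so $C_{i}C_{j}=C_{j}C_{i}=0$. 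In a semiprime ring, orthogonal ideals meet in zero, since $(C_{i}\cap C_{j})^{2}\subseteq C_{i}C_{j}=0$. In particular the sum $\sum_{i} C_{i}$ is direct.

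Then I would form the descending chain of right ideals
$$D_{n}=\bigoplus_{i\ge n} C_{i}.$$
Because $R_{R}$ is monoartinian, \Cref{1} (in its monoartinian form, that is, the definition) gives an $n$ with a monomorphism $f\colon D_{n}\rightarrowtail D_{n+1}$. Restrict $f$ to $C_{n}$ and choose $0\neq x\in C_{n}$. I would like to use $\mathrm{ann}$-type information to force $f(C_{n})=0$. Since $C_{n}$ is a two-sided ideal, $f(cr)=f(c)r$ for $c\in C_{n}$ and $r\in R$. Taking $r\in C_{n}$ gives $f(C_{n}^{2})=f(C_{n})C_{n}\subseteq D_{n+1}C_{n}=0$, because $C_{j}C_{n}=0$ for every $j>n$. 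Semiprimeness gives $C_{n}^{2}\neq 0$, so $f$ kills the nonzero right ideal $C_{n}^{2}\subseteq D_{n}$. This contradicts injectivity of $f$.

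The main obstacle is making sure the decomposition step is correct. The two points to check are that $C_{i}\neq 0$ and that the orthogonality is two-sided, which uses $\mathrm{r.ann}=\mathrm{l.ann}$ for ideals in a semiprime ring as recalled before the statement. Once those hold, the monomorphism argument above is short, and the key trick is to evaluate $f$ on products $C_{n}C_{n}$.
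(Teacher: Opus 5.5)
Your proof is correct, but it takes a longer route than the paper's. The paper argues directly, not by contradiction. For an ascending chain $A_1\subseteq A_2\subseteq\cdots$ of annihilator ideals, it applies the monoartinian condition to the descending chain ${\rm ann}(A_1)\supseteq{\rm ann}(A_2)\supseteq\cdots$ itself. This gives monomorphisms $f\colon{\rm ann}(A_n)\rightarrowtail{\rm ann}(A_k)$, and the paper concludes $A_k={\rm ann}({\rm ann}(A_k))\subseteq{\rm ann}({\rm ann}(A_n))=A_n$. The paper states that inclusion without comment, and its justification is exactly your key trick: for $y\in{\rm ann}(A_n)$ and $a\in A_k$ one has $f(ya)=f(y)a\in{\rm ann}(A_k)A_k=0$, so ${\rm ann}(A_n)A_k=0$. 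In your notation, the monomorphism $g\colon B_n\rightarrowtail B_{n+1}$ already gives $g(B_n\,{\rm ann}(B_{n+1}))\subseteq B_{n+1}\,{\rm ann}(B_{n+1})=0$. Hence ${\rm ann}(B_{n+1})\subseteq{\rm ann}(B_n)$ and $B_n=B_{n+1}$, so the ideals $C_i$ and $D_n$ are not needed for this lemma. What your detour does buy is an equivalent, more structural form of the conclusion: a semiprime right monoartinian ring contains no infinite direct sum of nonzero ideals. Your last step uses only $C_jC_n=0$ for $j>n$ and $C_n^2\neq 0$, and your $C_i$-construction is the standard reduction from strictly descending chains of annihilator ideals to such sums. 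Two small points. First, the directness of $\sum_i C_i$ is never used, and the inference ``pairwise intersections are zero, in particular the sum is direct'' is not valid in general. It holds here only because $C_i$ is orthogonal to the whole ideal $\sum_{j\neq i}C_j$, to which your squaring argument applies. Second, $f(cr)=f(c)r$ is just right $R$-linearity of $f$, not a consequence of $C_n$ being two-sided.
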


\begin{proof}
Let $A_{1} \subseteq A_{2} \subseteq \cdots$ be an ascending chain of annihilator ideals. Then we have a descending chain ${\rm ann}(A_{1}) \supseteq {\rm ann}(A_{2}) \supseteq \cdots$.  Since $R$ is  right monoartinian, there exists $n \geq 1$ such that ${\rm ann}(A_{n}) \rightarrowtail {\rm ann}(A_{k})$, for all $k \geq n$. Thus ${\rm ann}({\rm ann}(A_{k})) \subseteq {\rm ann}({\rm ann}(A_{n}))$ and so $A_{k} \subseteq  A_{n}$. On the other hands, $A_{n} \subseteq  A_{k}$. Therefore $A_{n} =  A_{k}$, for all $k \geq n$. 
\end{proof}

\begin{Lem}\label{Acc on anihilators}
Let $R$  be a commutative ring. Then the following statements hold:\\
\indent{\rm{(a)}} If $ R$ is monoartinian {\rm(}mononoetherian{\rm)}, then it satisfies the Acc {\rm(}Dcc{\rm)} on annihilators.

\indent{\rm{(b)}} If $R$ is self-injective, then $R$ is monoartinian {\rm(}mononoetherian{\rm)} if and only if $R$ is artinian  \indent\indent{\rm(}noetherian{\rm)}.
\end{Lem}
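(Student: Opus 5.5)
For part (a) the plan is to turn embeddings of ideals into inclusions of their double annihilators, and then use $\mathrm{ann}\,\mathrm{ann}\,\mathrm{ann}=\mathrm{ann}$. For part (b) the plan is to turn embeddings directly into inclusions via \Cref{injective duo}.

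\textbf{Basic fact for (a).} Since $R$ is commutative, the annihilator of a module is an isomorphism invariant. So if $J \overset{f}\rightarrowtail I$ for ideals $I,J$ of $R$, then
$$\mathrm{ann}(I)\subseteq \mathrm{ann}(f(J))=\mathrm{ann}(J).$$
Taking annihilators once more gives $\mathrm{ann}(\mathrm{ann}(J))\subseteq \mathrm{ann}(\mathrm{ann}(I))$. We also use the standard fact that an annihilator ideal $A$ satisfies $A=\mathrm{ann}(\mathrm{ann}(A))$.

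\textbf{(a), monoartinian case.} Let $A_1\subseteq A_2\subseteq\cdots$ be annihilators. Then $\mathrm{ann}(A_1)\supseteq \mathrm{ann}(A_2)\supseteq\cdots$ is a descending chain of ideals. Since $R$ is monoartinian, there is $n$ with $\mathrm{ann}(A_n)\rightarrowtail \mathrm{ann}(A_k)$ for all $k\ge n$. By the basic fact,
$$A_k=\mathrm{ann}\,\mathrm{ann}(A_k)\subseteq \mathrm{ann}\,\mathrm{ann}(A_n)=A_n.$$
Together with $A_n\subseteq A_k$, this gives $A_k=A_n$ for all $k\ge n$, which is the ACC.

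\textbf{(a), mononoetherian case.} Let $A_1\supseteq A_2\supseteq\cdots$ be annihilators. Then $\mathrm{ann}(A_i)$ is an ascending chain. Mononoetherianity gives $n$ with $\mathrm{ann}(A_k)\rightarrowtail \mathrm{ann}(A_n)$ for all $k\ge n$. The same computation yields $A_n\subseteq A_k$, so the chain stabilizes, which is the DCC.

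\textbf{(b).} Artinian implies monoartinian because equality is an embedding (or by \Cref{p-art}(b) and \Cref{1}), and noetherian implies mononoetherian trivially. For the converses:
\begin{enumerate}[(i)]
\item Every ideal of a commutative ring is a fully invariant submodule of $R_R$, since every endomorphism of $R_R$ is multiplication by an element. So $R_R$ is an injective duo module.
\item By \Cref{injective duo}, any monomorphism $I\rightarrowtail J$ between ideals forces $I\subseteq J$.
\item Given a descending chain $I_1\supseteq I_2\supseteq\cdots$, monoartinianity yields $n$ with $I_n\rightarrowtail I_k$, hence $I_n\subseteq I_k$, and so $I_n=I_k$ for all $k\ge n$. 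Thus $R$ is artinian.
\item Dually, for an ascending chain, mononoetherianity yields $I_k\rightarrowtail I_n$, hence $I_k\subseteq I_n$ for all $k\ge n$. Thus $R$ is noetherian.
\end{enumerate}

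\textbf{Main obstacle.} The only delicate point is the basic fact in (a): that an embedding reverses the inclusion of annihilators. This needs commutativity, so that the annihilator of $J$ is a two-sided ideal invariant under isomorphism. It also needs the chain to consist of genuine annihilators, so that the double annihilator recovers $A_k$. Everything else is routine.
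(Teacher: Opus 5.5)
Your proof is correct. Part (a) is the paper's argument; you also write out the mononoetherian/DCC half, which the paper leaves implicit. Only the first inclusion $\mathrm{ann}(I)\subseteq\mathrm{ann}(J)$ of your ``basic fact'' is actually used.

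For (b) you take a genuinely different route. The paper derives (b) from (a) together with the theorem that a self-injective ring with ACC on annihilators is QF. You bypass (a) entirely. Since every ideal of a commutative ring is fully invariant in $R_R$, \Cref{injective duo} turns $I_n\rightarrowtail I_k$ into $I_n\subseteq I_k$. This is the argument of the paper's own corollary on injective duo modules, plus its dual for ascending chains.

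Your route has several advantages:
\begin{enumerate}
\item It is elementary and handles the artinian and noetherian cases symmetrically.
\item It works verbatim for any injective duo module.
\item It avoids a gap the paper leaves open. In the mononoetherian case, the paper's route needs the unstated remark that in a commutative ring $A\mapsto\mathrm{ann}(A)$ is an inclusion-reversing involution on annihilator ideals. That remark is what makes DCC on annihilators equivalent to ACC, so that the QF theorem applies.
\end{enumerate}

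The paper's route, in exchange, gives directly that $R$ is QF under either hypothesis, hence both artinian and noetherian. That stronger conclusion also follows from yours, since an artinian or noetherian self-injective ring is QF.
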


\begin{proof}
(a) Let $I_{1} \subseteq I_{2} \subseteq \cdots$ be an ascending chain of ideals of $R$ that are annihilators of
subsets of $R$. Then $ {\rm ann}(I_{1}) \supseteq {\rm ann}(I_{2}) \supseteq \cdots$ .
Since $R$ is monoartinian, there exists $n \geq 1$ such that 
${\rm ann}(I_{n}) \rightarrowtail {\rm ann}(I_{i})$
for any $ i \geq n$. Taking
the annihilators of these annihilators ideals, we get that $I_{i} \subseteq I_{n}$. Therefore $I_{n} = I_{i}$, for any $i \geq n$.

(b) follows from (a) and the fact that a right self-injective ring with the ascending
chain condition on right annihilators is QF.
\end{proof}

\begin{Pro}\label{Goldie}
Every commutative  semiprime monoartinian ring is a Goldie  ring.
\end{Pro}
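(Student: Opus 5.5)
Goldie means two conditions: ACC on annihilators and finite uniform dimension. The first is already in hand, since \Cref{Acc on anihilators}(a) shows that a commutative monoartinian ring satisfies the ACC on annihilators. So the plan reduces to proving that $R$ has finite uniform dimension, i.e.\ that $R$ contains no infinite direct sum of non-zero ideals.

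For this I would use that a commutative semiprime ring is non-singular. This also follows from \Cref{nonsingular}. In a semiprime commutative ring, ${\rm ann}(I)$ is the annihilator of $I$ and $I\cap {\rm ann}(I)=0$, because $(I\cap{\rm ann}(I))^2=0$. Moreover $I\oplus {\rm ann}(I)$ is essential in $R$.

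Now suppose $I_1\oplus I_2\oplus\cdots$ is an infinite direct sum of non-zero ideals. Put $A_n={\rm ann}(I_1\oplus\cdots\oplus I_n)$. In a commutative semiprime ring, $I_iI_j\subseteq I_i\cap I_j=0$ for $i\ne j$, so $I_{n+1}\subseteq A_n$. This gives a descending chain $A_1\supseteq A_2\supseteq\cdots$ of annihilator ideals. It is strictly descending: $I_{n+1}\subseteq A_n$, while $I_{n+1}\not\subseteq A_{n+1}$ because $I_{n+1}^2\neq 0$ by semiprimeness.

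Since $R$ is monoartinian, some $A_n$ embeds in $A_k$ for all $k\ge n$. An $R$-monomorphism preserves annihilators of elements, so it preserves annihilators of submodules. Hence ${\rm ann}(A_k)\subseteq{\rm ann}(A_n)$, that is, ${\rm ann}\,{\rm ann}(I_1\oplus\cdots\oplus I_k)\subseteq {\rm ann}\,{\rm ann}(I_1\oplus\cdots\oplus I_n)$. The left side contains $I_{n+1}$. The right side annihilates $A_n\supseteq I_{n+1}$. So $I_{n+1}^2=0$, which forces $I_{n+1}=0$ by semiprimeness, a contradiction.

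So the key step is the chain-of-annihilators argument, reusing the trick from the proof of \Cref{Acc on anihilators}. Finite uniform dimension plus ACC on annihilators gives that $R$ is Goldie. The main point to check is that monomorphisms preserve annihilators, i.e.\ ${\rm ann}(A)\subseteq{\rm ann}(B)$ whenever $B\rightarrowtail A$. This is immediate: if $f$ is injective and $ar=0$ for all $a\in A$, then $f(b)r=f(br)$ forces $br=0$ for every $b\in B$.
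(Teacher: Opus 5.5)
Your proposal is correct, but the key step, finite uniform dimension, is done differently from the paper.

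\textbf{The paper's route.} It takes right non-singularity from \Cref{nonsingular} and ACC on annihilator ideals from the lemma. It then cites Lam's Theorem 11.43 to pass to ACC on complements, and Lam's Theorem 6.30$'$ to get ${\rm u.dim}(R_R)<\infty$. It finishes with Goldie's theorem in the form ``semiprime, non-singular, finite uniform dimension'' (Lam's Theorem 11.13).

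\textbf{Your route.} You take ACC on annihilators from \Cref{Acc on anihilators}(a) and prove finite uniform dimension by hand. You build $A_n={\rm ann}(I_1\oplus\cdots\oplus I_n)$ and apply the monoartinian condition once. Take $k=n+1$ here, not an arbitrary $k\ge n$, so that $I_{n+1}\subseteq {\rm ann}(A_k)$. A monomorphism $A_n\rightarrowtail A_k$ gives ${\rm ann}(A_k)\subseteq{\rm ann}(A_n)$, so $I_{n+1}^2\subseteq {\rm ann}(A_n)A_n=0$, and semiprimeness kills $I_{n+1}$. Goldie then follows directly from the definition.

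\textbf{What each buys.} Your argument is elementary and self-contained. It never uses non-singularity, so your opening remarks about non-singularity and the essentiality of $I\oplus{\rm ann}(I)$ are unused and can be dropped. It also shows slightly more: in a commutative semiprime ring, ACC on annihilators alone forces finite uniform dimension. Indeed, ${\rm ann}(A_1)\subsetneq{\rm ann}(A_2)\subsetneq\cdots$ is strictly ascending, because $I_{n+1}\subseteq {\rm ann}(A_{n+1})$ but $I_{n+1}\subseteq A_n$ and $A_n\cap{\rm ann}(A_n)=0$. So you could simply invoke the lemma rather than rerun the embedding trick. The paper's route hands the work to standard machinery (complements and Goldie's theorem) at the cost of three external citations. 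It needs \Cref{nonsingular} only to feed that final characterization.
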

\begin{proof}
By \Cref{nonsingular}, $R$ is a non-singular ring and by \Cref{Acc on anihilators}, $R$ has Acc annihilator  ideals. Then \cite[Theorem 11.43]{Lam}, implies that $R$ has Acc on complements and so \cite[Theorem (6.30)']{Lam}, implies that 
 ${\rm u.dim}(R_{R})<\infty$. Therefore, by  \cite[Theorem 11.13]{Lam}, $R$ is a Goldie ring.
\end{proof}

\begin{Rem}
{\rm The commutative condition is necessary in above proposition. For instance consider $R=K \langle x_{1}, x_{2}\rangle$, where $K$ is a domain. Then $R$  is a non-commutative domain that is not right Goldie.}
\end{Rem}

\begin{Pro}\label{u.dim}
Let  $R$  be a commutative ring. Then every mononoetherian  $R$-module has finite uniform dimension.
\end{Pro}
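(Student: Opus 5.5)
The plan is to argue by contradiction. Suppose $M$ is mononoetherian but ${\rm u.dim}(M)=\infty$. Then $M$ contains an infinite direct sum $\bigoplus_{i\geq 1} C_i$ of non-zero submodules. Replacing each $C_i$ by a non-zero cyclic submodule, we may assume $C_i=x_iR\cong R/I_i$ with $I_i={\rm ann}(x_i)$. Every submodule of a mononoetherian module is mononoetherian, so it suffices to show that $D=\bigoplus_{i\geq1}C_i$ is not mononoetherian.

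\textbf{Step 1 (absorbing a summand).} Apply \Cref{mono-comp} to the family $\mathcal{F}$ of all finite partial sums $\bigoplus_{i\in S}C_i$ with $S$ finite and non-empty. It yields a finite set $S$ such that $N=\bigoplus_{i\in S}C_i$ satisfies $N'\rightarrowtail N$ for every $N'\in\mathcal{F}$ with $N\subseteq N'$. Fix any $j\notin S$. Then $N\oplus C_j\rightarrowtail N$. Iterating, $N\oplus C_j\oplus C_j\rightarrowtail N\oplus C_j\rightarrowtail N$, since the first embedding applied to the first two summands lands in $N$. Hence $N\oplus C_j^{(k)}\rightarrowtail N$ for every $k\geq1$, and in particular $(R/I_j)^{(k)}\rightarrowtail N$ for all $k$. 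Here $N$ is generated by $n=|S|$ elements.

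\textbf{Step 2 (localizing).} This is where commutativity enters. Choose a maximal ideal $\mathfrak m\supseteq I_j$, so that $(R/I_j)_{\mathfrak m}\neq0$. Localization is exact, so $(A)^{(k)}\rightarrowtail N_{\mathfrak m}$ for all $k$, where $A=R_{\mathfrak m}/(I_j)_{\mathfrak m}$ is a non-zero cyclic module over the local ring $R_{\mathfrak m}$ and $N_{\mathfrak m}$ is generated by $n$ elements. It then remains to show that an $n$-generated module over a local ring cannot contain $A^{(k)}$ for $k>n$.

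\textbf{Step 3 (the main obstacle).} Step 2 is the hard step, because tensoring with the residue field is not left exact, so one cannot simply count minimal generators. What I would try first is to pass to the $I$-torsion part, writing $I=(I_j)_{\mathfrak m}$. The image of $A^{(k)}$ lies in $(0:_{N_{\mathfrak m}}I)$, which is a module over $\bar R=R_{\mathfrak m}/I$. It contains a free $\bar R$-module of rank $k$, and it embeds into $\mathrm{Hom}_{R_{\mathfrak m}}(\bar R,N_{\mathfrak m})$, which is a submodule of $\mathrm{Hom}(\bar R,R_{\mathfrak m}^n/L)$ for a suitable submodule $L$ of $R_{\mathfrak m}^n$. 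I would then try a determinantal argument: restrict the composite $\bar R^{\,k}\to N_{\mathfrak m}$ to a map of free modules via lifting, and use that $k>n$ columns over a commutative ring satisfy a nontrivial relation with coefficients from the $n\times n$ minors (McCoy's theorem). This would exhibit a non-zero element of $\bar R^{\,k}$ in the kernel, contradicting injectivity.

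Making this lift compatible with the quotient $L$ is the delicate point. If it fails, I would instead choose $x_j$ more carefully, for instance replacing $x_j$ by $x_jr$ so that $I_j$ becomes maximal among annihilators, or so that $A$ has a simple socle. With such a choice, the socle of $A^{(k)}$ is $k$-dimensional over $R/\mathfrak m$, and I would aim to bound the socle dimension of the image inside $N_{\mathfrak m}$ by $n$. This is a contradiction for $k>n$ and would prove ${\rm u.dim}(M)<\infty$.
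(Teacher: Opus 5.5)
Your Step 1 is correct. Steps 2--3, however, reduce the problem to a statement that is false, so the plan cannot be completed.

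The claim that an $n$-generated module over a local ring cannot contain $A^{(k)}$ for $k>n$ fails already for $n=1$. Let $F$ be a field and $R=F[x_1,x_2,\ldots]/(x_1,x_2,\ldots)^2$. This ring is local with maximal ideal $\mathfrak m=\bigoplus_i F\bar x_i$ and $\mathfrak m^2=0$. Each $F\bar x_i$ is therefore a submodule isomorphic to $A=R/\mathfrak m$, and the cyclic module $R$ contains $A^{(k)}\cong F\bar x_1\oplus\cdots\oplus F\bar x_k$ for every $k$.

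The same example defeats both of your fallbacks, even though here $A$ is simple and $\mathfrak m$ is maximal, so your proposed normalizations of $x_j$ already hold. In this case $\bar R=F$ and $(0:_R\mathfrak m)=\mathfrak m$ is infinite-dimensional. So there is no embedding into $\bar R^{\,n}$ on which McCoy's theorem could act. Likewise, the socle of the cyclic module $R$ is infinite-dimensional over $R/\mathfrak m$, so the socle of an $n$-generated module is not bounded by $n$.

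The information that actually yields a contradiction was discarded when you replaced $N\oplus C_j\rightarrowtail N$ by $C_j^{(k)}\rightarrowtail N$ and then kept only the number of generators of $N$. What matters is that the embedding sends $N$ itself back into $N$.

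The missing idea is that a finitely generated module $N$ over a commutative ring admits no monomorphism $g\colon N\oplus C\to N$ with $C\neq 0$. With this, Step 1 alone finishes the proof, and no localization is needed. Here is the argument:
\begin{enumerate}
\item Put $f=g|_N$ and $Z=g(C)$. Then $f$ is an injective endomorphism of $N$ with $Z\cap f(N)=0$.
\item Applying the injective map $f$ repeatedly to $Z\oplus f(N)\subseteq N$ shows by induction that $Z+f(Z)+\cdots+f^m(Z)$ is a direct sum inside $N$ for every $m$.
\item If $N$ is generated by $m$ elements, the Cayley--Hamilton (determinant) trick gives $c_0,\ldots,c_{m-1}\in R$ with $f^m+c_{m-1}f^{m-1}+\cdots+c_0=0$ on $N$.
\item For $z\in Z$, each term $c_if^i(z)=f^i(c_iz)$ lies in $f^i(Z)$. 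Directness forces $f^m(z)=0$, hence $z=0$.
\end{enumerate}

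For comparison, the paper reaches exactly your Step 1 embedding, via the chain $a_1R\subseteq a_1R\oplus a_2R\subseteq\cdots$. It then detours through non-noetherian cyclic summands and non-finitely generated submodules $K_i$. It ends by inferring $X\cong Y$ from a monomorphism $X\rightarrowtail Y$ together with the projection epimorphism $X\to Y$. That inference is not justified as written, since a monomorphism and an epimorphism between two modules do not in general give an isomorphism. The argument above closes the proof directly at the first embedding.
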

\begin{proof}
Let $M$ be a mononoetherian $R$-module. If $\oplus_{i=1}^{\infty}M_{i}$ is a submodule of $M$, where each
$M_{i}$ is non-zero, we can choose a non-zero element $a_{i} \in M_{i}$ for each $i$. Then $\oplus_{i=1}^{\infty}a_{i} R$ is a mononoetherian module. Consider the ascending chain $a_{1}R \subseteq a_{1}R \oplus a_{2}R \subseteq \cdots $. Then there exists $n \geq 1$ such that  $ a_{1}R \oplus \cdots \oplus a_{n+1}R \overset{f}\rightarrowtail  a_{1}R \oplus \cdots \oplus a_{n}R $ and so $ a_{1}R \oplus \cdots \oplus a_{n+1}R \cong f( a_{1}R \oplus \cdots \oplus a_{n+1}R ) \subseteq  a_{1}R \oplus \cdots \oplus a_{n}R $. 
 This implies that, for some $1 \leq j \leq n$, $a_{j}R$ is not noetherian. For if not, $\sum_{i=1}^{n+1} {\rm u.dim}(a_{i}R) \leq \sum_{i=1}^{n} {\rm u.dim}(a_{i}R)$, a contradiction.
 Set $b_{1}=a_{j}$. Similarly, $\oplus_{i=n+1}^{\infty}a_{i} R$ is a mononoetherian and so there exists  $k \geq n+1$  such that $a_{k}R$ is nonnoetherian. Set $b_{2}=a_{k} $. Continuing in this manner, we find a sequence $b_{1}, b_{2}, \ldots$ such that $\oplus_{i=1}^{\infty}b_{i} R$ is  mononoetherian  and each $b_{i} R$ is not noetherian. For each
 $i \geq 1$,
let $K_{i}$ be a nonfinitely generated submodule of $b_{i} R$ and consider the ascending chain
$K_{1} \subseteq b_{1} R \subseteq b_{1} R \oplus K_{2} \subseteq b_{1} R \oplus b_{2} R \subseteq  b_{1} R \oplus b_{2} R \oplus K_{3} \subseteq \cdots$
 of submodules of $\oplus_{i=1}^{\infty}b_{i} R$. Then there exists  $t \geq 1$ such that
 $ b_{1} R \oplus b_{2} R \oplus  \cdots b_{t}R\oplus K_{t+1} \rightarrowtail b_{1} R \oplus b_{2} R \oplus  \cdots b_{t}R$. Also $\pi_{t}: b_{1} R \oplus b_{2} R \oplus  \cdots b_{t}R\oplus K_{t+1}  \rightarrow b_{1} R \oplus b_{2} R \oplus  \cdots b_{t}R$ is an epimorphism. Therefore, $b_{1} R \oplus b_{2} R \oplus  \cdots b_{t}R\oplus K_{t+1} \cong b_{1} R \oplus b_{2} R \oplus  \cdots b_{t} R$.
This implies that $K_{n+1}$ is finitely generated,  a contradiction. 
\end{proof}

\begin{Cor}
If $D$ is a commutative mononoetherian domain, then $D$ is an Ore domain.
\end{Cor}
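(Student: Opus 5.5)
We have to prove that a commutative mononoetherian domain $D$ is an Ore domain. For a commutative domain the Ore condition holds trivially: given nonzero $a,b\in D$, we have $a\cdot b = b\cdot a \in aD\cap bD$, so $aD\cap bD\neq 0$. The intended derivation, however, goes through the preceding results, and we follow that route.

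First, $D_D$ is a mononoetherian $D$-module over the commutative ring $D$. By \Cref{u.dim}, $D_D$ therefore has finite uniform dimension.

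Next, $D$ is a domain, so the right singular ideal $\Z(D_D)$ is zero. Indeed, if $x\neq 0$ and $xI=0$ for some essential (hence nonzero) ideal $I$, we would get zero divisors, which is impossible.

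A ring with ${\rm u.dim}(R_R)<\infty$ satisfies ACC on right annihilators in the needed sense. Here every nonzero right annihilator is $D$ itself, since the annihilator of any nonzero subset of a domain is $0$. So $D$ is right Goldie, and a Goldie domain is an Ore domain by Goldie's theorem (\cite[Theorem 11.13]{Lam}).

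More directly: finite uniform dimension forces $D_D$ to be uniform. If $aD\cap bD=0$ for nonzero $a,b$, then $aD\oplus bD\oplus ab D\oplus\cdots$ can be iterated using the maps $x\mapsto ax$, giving an infinite direct sum $bD\oplus abD\oplus a^2bD\oplus\cdots$. This contradicts finite uniform dimension, so $aD\cap bD\neq0$ for all nonzero $a,b$, which is the Ore condition.

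The only step requiring care is checking that the hypotheses of \Cref{u.dim} are met, namely commutativity and the mononoetherian property of $D_D$; both are given, so there is no real obstacle.
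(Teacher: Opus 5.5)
Your opening observation is already a complete proof, and it takes a different route from the one the paper intends. The paper gives no separate proof. Because the corollary sits right after \Cref{u.dim}, the intended argument is this: $D_D$ has finite uniform dimension, and a domain of finite uniform dimension is Ore, since otherwise $bD\oplus abD\oplus a^{2}bD\oplus\cdots$ would be an infinite direct sum. That is your ``more directly'' paragraph.

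The finite-uniform-dimension route is the one that matters for noncommutative domains, where finite right uniform dimension is exactly what forces the right Ore condition. However, \Cref{u.dim} is only stated for commutative rings, and there $ab=ba\in aD\cap bD$ makes every domain Ore outright. So your first paragraph shows that both the mononoetherian hypothesis and \Cref{u.dim} are superfluous. It also does not depend on the paper's proof of \Cref{u.dim}, which at one point concludes that two modules are isomorphic merely because there is a monomorphism and an epimorphism between them, with no justification. Your commutativity argument is the one to keep.

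A few slips in your second route:
\begin{itemize}
\item $aD\oplus bD\oplus abD\oplus\cdots$ is not direct, since $abD\subseteq aD$. Only $bD\oplus abD\oplus a^{2}bD\oplus\cdots$ is direct, and that needs the one-line cancellation of $a$, which you only gesture at.
\item Your claim that finite uniform dimension gives ACC on right annihilators is false in general. It is harmless here, because in a domain the only right annihilators are $0$ and $D$.
\item The Goldie detour is redundant once you have the direct-sum argument.
\end{itemize}
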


\begin{Lem}{\rm (see \cite[Lemma 6.3]{Krull})}\label{krull}
Applied to a non-zero right ideal of a semiprime ring with {\rm Krull} dimension, the adjectives uniform, monoform, critical, and 
compressible are equivalent. 
\end{Lem}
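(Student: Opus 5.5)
The plan is to route all four adjectives through uniformity, using two standard consequences of the hypotheses. A semiprime ring with Krull dimension is right Goldie (Gordon–Robson). Hence $R$ is right non-singular and $R_R$ has finite uniform dimension. Also, every non-zero module with Krull dimension contains a critical submodule. Let $U$ be a non-zero right ideal of $R$. I will prove the cycle
$$\text{critical}\Rightarrow\text{monoform}\Rightarrow\text{uniform}\Rightarrow\text{compressible}\Rightarrow\text{uniform}\Rightarrow\text{critical}.$$

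The first two implications hold for arbitrary modules, and I will only recall them.
\begin{enumerate}
\item \emph{Critical implies monoform.} If $f:V\to U$ is non-zero with $V\subseteq U$ and $\ker f\neq 0$, then ${\rm Kdim}(f(V))={\rm Kdim}(V/\ker f)<{\rm Kdim}(U)$. But $f(V)$ is a non-zero submodule of the critical module $U$, so it has the same Krull dimension as $U$, a contradiction.
\item \emph{Monoform implies uniform.} If $A\oplus B\subseteq U$ with $A,B\neq0$, then the projection $A\oplus B\to A\subseteq U$ is a non-zero partial endomorphism with non-zero kernel $B$.
\end{enumerate}

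\emph{Uniform implies compressible.} This is the heart of the argument and is where semiprimeness and non-singularity enter. Let $0\neq V\subseteq U$. Since $R$ is semiprime, $VU\neq0$, so pick $v\in V$ with $vU\neq 0$. Left multiplication by $v$ gives an $R$-homomorphism $U\to vU\subseteq V$ with kernel ${\rm r.Ann}_R(v)\cap U$. Suppose this kernel is non-zero. Then it is essential in the uniform module $U$, so $vU\cong U/({\rm r.Ann}_R(v)\cap U)$ is a non-zero singular module. But $vU$ sits inside the non-singular module $R_R$, a contradiction. Hence $U\cong vU\subseteq V$, so $U\rightarrowtail V$ and $U$ is compressible.

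\emph{Compressible implies uniform.} Suppose $A\oplus B\subseteq U$ with $A,B\neq 0$. Compressibility gives $U\rightarrowtail A$, so ${\rm u.dim}(U)\le {\rm u.dim}(A)<{\rm u.dim}(A)+{\rm u.dim}(B)\le{\rm u.dim}(U)$. This uses that ${\rm u.dim}(U)\le{\rm u.dim}(R_R)<\infty$, and it is a contradiction.

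\emph{Uniform implies critical.} Since $U$ has Krull dimension, it contains a critical submodule $C$. By the previous step $U$ is compressible, so $U\rightarrowtail C$. A non-zero submodule of a critical module is critical, so $U$ is critical. This closes the cycle.

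The step I expect to need the most care is uniform $\Rightarrow$ compressible, specifically justifying that $vU$ being non-zero and singular contradicts non-singularity. This is where citing Gordon–Robson, to get non-singularity of semiprime rings with Krull dimension, is essential. The remaining implications are routine.
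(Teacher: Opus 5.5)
The paper gives no proof of this lemma; it is quoted from the cited source, so there is no in-paper argument to compare with. Your proof is correct. Each implication checks out, including the points you leave implicit: $VU\supseteq V^{2}\neq 0$ by semiprimeness, submodules of a non-singular module are non-singular, and a non-zero submodule of an $\alpha$-critical module is $\alpha$-critical.

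The cost is that everything rests on the Gordon--Robson theorem that a semiprime ring with right Krull dimension is right Goldie. That theorem is considerably deeper than the lemma. In fact the natural logical order runs the other way. The non-singularity you import follows in a few lines once one knows that critical right ideals are compressible: it is the same argument the paper uses to show that a semiprime right monoartinian ring is right non-singular. Your argument is not circular as long as Gordon--Robson is taken from the literature.

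You can, however, avoid both non-singularity and finite uniform dimension. You only need three facts: $R$ has no non-zero nilpotent right ideals, every non-zero module with Krull dimension has a critical submodule, and critical modules are monoform.

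\emph{Critical $\Rightarrow$ compressible.} Let $C$ be a critical right ideal and $0\neq V\subseteq C$. Since $V^{2}\neq 0$, pick $v\in V$ with $vV\neq 0$, so $vC\neq 0$. Left multiplication by $v$ is then a non-zero endomorphism of $C$ with image in $V$, hence injective, so $C\rightarrowtail V$.

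\emph{Uniform $\Rightarrow$ critical.} Let $U$ be a uniform right ideal. Pick a critical right ideal $C\subseteq U$ and $c\in C$ with $cC\neq 0$. As in the previous step, ${\rm r.Ann}_{R}(c)\cap C=0$. By uniformity of $U$ this gives ${\rm r.Ann}_{R}(c)\cap U=0$. Hence $U\cong cU\subseteq C$, so $U$ is critical.

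\emph{Compressible $\Rightarrow$ critical.} A compressible right ideal embeds in any critical right ideal it contains, so it is critical.

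Together with your two general implications (critical $\Rightarrow$ monoform $\Rightarrow$ uniform), this closes the cycle with no appeal to the Goldie property. This version is self-contained and could itself feed into a proof of the Gordon--Robson theorem, rather than depend on it.
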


\begin{Pro}
Let $R$ be a semiprime ring that has {\rm Krull} dimension. If every non-zero right ideal of $R$ contains a right regular element, then $R_{R}$ is monoartinian if and only if $R_{R}$  is uniform. 
\end{Pro}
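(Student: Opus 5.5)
The plan is to show that, under the hypotheses, both conditions in the statement are equivalent to $R_R$ being \emph{compressible}, i.e.\ $R_R$ embeds in each of its non-zero right ideals. The bridge between compressibility and uniformity is \Cref{krull}: since $R$ is semiprime with Krull dimension, and $R$ is itself a non-zero right ideal of $R$, the module $R_R$ is uniform if and only if it is compressible.

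\textbf{Step 1: the regularity hypothesis makes $R_R$ compressible.} Let $I$ be a non-zero right ideal and pick a regular element $c\in I$ supplied by the hypothesis. Consider the map $\lambda_c\colon R\to I$, $r\mapsto cr$. It is a right $R$-homomorphism, and its image $cR$ lies in $I$. Its kernel is ${\rm r.Ann}_R(c)$, which is zero because $c$ is regular. I will read ``right regular'' in the sense that makes this annihilator vanish; if the paper's convention is the other one, I will note that the argument needs ${\rm r.Ann}_R(c)=0$. Hence $R_R\rightarrowtail I$ for every non-zero right ideal $I$, so $R_R$ is compressible.

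\textbf{Step 2: compressible $R_R$ is monoartinian.} Let $M_1\supseteq M_2\supseteq\cdots$ be a descending chain of right ideals. If some $M_n=0$, the chain is eventually zero and the condition holds trivially at that index. Otherwise every $M_k$ is non-zero. By Step 1, $M_1\subseteq R\rightarrowtail M_k$ for all $k$, so $M_1\rightarrowtail M_k$, and $R_R$ is monoartinian with $n=1$.

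\textbf{Step 3: the two directions of the statement.}
\begin{itemize}
\item Uniform $\Rightarrow$ monoartinian: this follows from Step 2, or alternatively from \Cref{krull} together with the argument of Step 2 applied to the compressible module $R_R$.
\item Monoartinian $\Rightarrow$ uniform: Step 1 shows $R_R$ is compressible, and \Cref{krull}, applied to the non-zero right ideal $R$, shows that $R_R$ is uniform.
\end{itemize}
Alternatively, in the second direction one could use the Remark after \Cref{mono-comp}, which gives a compressible right ideal $L$, but the regular-element hypothesis makes this unnecessary.

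The main obstacle is not computational. It is making sure \Cref{krull} really applies to $R$ itself as a right ideal of a semiprime ring with Krull dimension, and fixing the correct sidedness of ``regular'' so that $\lambda_c$ is injective. The argument also shows that, under these hypotheses, $R_R$ is always monoartinian. So the substantive content of the equivalence is really that $R_R$ is uniform, and I would remark on this.
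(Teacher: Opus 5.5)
Your proof is correct, and it takes a shorter route than the paper's.

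\textbf{The paper's route.} For $(\Rightarrow)$, the paper:
\begin{enumerate}
\item uses monoartinianness (the Remark after \Cref{mono-comp}) to get a compressible right ideal $I$;
\item chooses a right regular $x\in I$ and forms the chain $R\supseteq I\supseteq xR\supseteq xI\supseteq x^{2}R\supseteq\cdots$;
\item extracts $R\rightarrowtail I$ from the monoartinian condition;
\item applies \Cref{krull} to $I$, so that $R\cong f(R)\subseteq I$ is uniform.
\end{enumerate}
For $(\Leftarrow)$ it uses \Cref{krull} in the opposite direction: each nonzero right ideal of a uniform $R_R$ is uniform, hence compressible.

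\textbf{Your route.} You observe that $R\cong cR\subseteq I$ for any right regular $c\in I$, so $R_R$ is compressible outright. Your sidedness convention matches the paper's, which also needs ${\rm r.Ann}(x)=0$ when it writes $R\cong xR$. Monoartinianness then follows without using uniformity or Krull dimension. Uniformity follows from one application of \Cref{krull} to the right ideal $R$, which is legitimate.

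\textbf{What your approach gains.}
\begin{itemize}
\item It shows that under the hypotheses both conditions always hold, so the proposition really just asserts that $R_R$ is uniform.
\item It shows that semiprimeness is dispensable. A compressible module of finite uniform dimension is already uniform: push a decomposition $A\oplus B\subseteq C$ repeatedly along an embedding $C\rightarrowtail A$ to get an infinite direct sum. Krull dimension supplies the finite uniform dimension.
\item It avoids a flaw in the paper's chain step, which does not actually follow from the definition. The index supplied by the monoartinian condition may sit at a term $x^{j}I$; indeed every such index works, since $x^{j}I\cong I$ is compressible. Such an index gives no information about $R$. The step is rescued exactly by your remark that $xR\subseteq I$.
\end{itemize}

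\textbf{What the paper's approach gains.} It uses the regularity hypothesis only for the single compressible right ideal produced by monoartinianness, and not at all in $(\Leftarrow)$. So it is the one that would adapt to a weaker regularity assumption of that kind.
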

\begin{proof}
$(\Rightarrow)$. Since $R_{R}$ is monoartinian, it has a compressible  right ideal, say $I$. By hypothesis, there exists a non-zero right regular element $x \in I$. Thus $R \cong xR$,  $I \cong xI$ and we have a descending chain
$R \supseteq I \supseteq xR \supseteq xI \supseteq x^{2}R \supseteq x^{2}I \supseteq \cdots$ 
of submodules of $R$.  Since $R_{R}$ is monoartinian, there exists $n \geq 1$ such that $x^{n}R \rightarrowtail x^{n}I$. Then  $R \overset{f}\rightarrowtail I$ and so $R \cong f(R) \subseteq I$. Now by \Cref{krull}, $R_{R}$ is uniform.

$(\Leftarrow)$. Let $R_{R}$  be uniform and $I_{1} \supseteq I_{2} \supseteq \cdots$ be a descending chain of its right ideals.  Then every $I_{i}$ is uniform and so by \Cref{krull}, is compressible. Therefore, $R_{R}$ is monoartinian.
\end{proof}

\begin{Pro}
Let $R$ be a semiprime local ring with ${\rm Soc}(R_{R})\neq 0$. Then every right $R$-module is monoartinian  if and only if $R$ is a homogeneous semisimple ring.
\end{Pro}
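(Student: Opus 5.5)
The strategy is to show that, under the standing hypotheses (semiprime, local, ${\rm Soc}(R_{R})\neq 0$), the ring $R$ is forced to be a division ring. The direction ($\Rightarrow$) is then essentially automatic. The direction ($\Leftarrow$) follows from \Cref{semisimple} together with \Cref{1}.

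For ($\Leftarrow$), assume $R$ is a homogeneous semisimple ring. Then there is a single simple right $R$-module $S$ up to isomorphism, and every right $R$-module $M$ is semisimple, so $M\cong \oplus_{I}S$ for some index set $I$. Thus $M$ is a (one-term) finite direct sum of homogeneous semisimple modules. By \Cref{semisimple}, $M$ has weakly uniserial dimension, and by \Cref{1}, $M$ is monoartinian.

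For ($\Rightarrow$), I would not actually need the monoartinian hypothesis. The steps are as follows.

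\textbf{Step 1.} Since ${\rm Soc}(R_{R})\neq 0$, choose a minimal right ideal $S$ of $R$. Because $R$ is semiprime, $S^{2}\neq 0$.

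\textbf{Step 2.} Apply Brauer's lemma to conclude $S=eR$ for some idempotent $e$. In outline: pick $x\in S$ with $xS\neq 0$. Minimality gives $xS=S$, so $x=xe$ for some $e\in S$. Then $x(e^{2}-e)=0$, and the right ideal $\{s\in S : xs=0\}$ is a proper submodule of $S$, hence zero. This gives $e^{2}=e\neq 0$, and $eR=S$ by minimality.

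\textbf{Step 3.} A local ring has no idempotents other than $0$ and $1$, so $e=1$. Hence $R_{R}=S$ is simple.

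\textbf{Step 4.} Since $R_{R}$ is simple, every nonzero $a\in R$ satisfies $aR=R$, so $a$ has a right inverse. Therefore $R$ is a division ring. In particular $R$ is a homogeneous semisimple ring, whose only simple module is $R_{R}$.

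The only point that needs care is Step 2, the semiprime-to-idempotent argument. Even that is standard, and I would cite it (e.g.\ Lam, \emph{First Course}, Brauer's Lemma 10.22) rather than reprove it. I would also remark in the write-up that the monoartinian hypothesis is not used in ($\Rightarrow$): under the stated assumptions, $R$ is automatically a division ring.
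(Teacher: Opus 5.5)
Your proof is correct, but it follows a genuinely different route from the paper's.

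For ($\Rightarrow$) the paper does use the hypothesis. Since $R_{R}$ is monoartinian and $R$ is semiprime, its earlier proposition on semiprime right monoartinian rings gives $\mathcal{Z}(R_{R})=0$, so a minimal right ideal $I$ is not singular. A simple module is either singular or projective, so $I$ is projective. Because $R$ is local, there is one simple module up to isomorphism, so every simple module is projective, and $R$ is homogeneous semisimple.

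You bypass non-singularity entirely. Brauer's lemma turns semiprimeness into an idempotent generator $e$ of the minimal right ideal, and locality forces $e=1$, so $R$ is a division ring.

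Your route is more elementary and more informative. It shows that the standing hypotheses (semiprime, local, ${\rm Soc}(R_{R})\neq 0$) already force $R$ to be a division ring. Hence both sides of the equivalence hold automatically, and the monoartinian condition plays no role. It also shows that the paper's appeal to non-singularity is a detour: in a semiprime ring a minimal right ideal $eR$ is a direct summand of $R_{R}$, hence projective, regardless of any chain condition. The paper's argument mainly serves to showcase its non-singularity result. Its passage from the single ideal $I$ to all simple modules, and the homogeneity conclusion, rest on locality exactly as your Step 3 does.

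Your ($\Leftarrow$) direction is fine; it goes through the semisimple-module criterion and the theorem identifying weakly uniserial dimension with the monoartinian property. It can also be seen directly. Along a descending chain of submodules of $\oplus_{I}S$, the numbers of simple summands form a non-increasing sequence of cardinals. Such a sequence must stabilize, and from that point on the terms are isomorphic, so each embeds in the later ones.
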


 \begin{proof}
 One side is clear. For the other side, assume that every right $R$-module is monoartinian. Then, by \Cref{nonsingular}, $R$ is  a right non-singular ring  and so $\mathcal{Z}(I) = 0$, for every minimal right ideal $I$ of $R$. On the other hands we know that every simple module is either singular or projective, but not both. Thus $I$ and so every simple right $R$-module is projective. Therefore, $R$ is a homogeneous semisimple ring.
 \end{proof}

Recall that a module $M$ is saide to be essentially retractable if ${\rm Hom}_{R}(M, N) \neq 0$ for all
essential submodules $N$ of $M$. 

\begin{Pro}\label{essen ret}
Let  $M$ be a right $R$-module. If $\oplus_{1}^{\infty} M$  is  monoartinian, then $M$ is an essentially retractable module.
\end{Pro}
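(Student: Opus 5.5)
We argue by contraposition, modelled on \Cref{E(s)}. Fix an essential submodule $N\subseteq_{e} M$; we must produce a nonzero map $M\to N$. We assume $N\neq M$, since otherwise the identity works.

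In $\oplus_{1}^{\infty} M$ consider the descending chain
$$N\oplus\Big(\oplus_{2}^{\infty} M\Big) \supseteq N\oplus N\oplus\Big(\oplus_{3}^{\infty} M\Big)\supseteq \cdots,$$
whose $k$-th term is $T_k=N^{(k)}\oplus(\oplus_{k+1}^{\infty}M)$. Since $\oplus_{1}^{\infty}M$ is monoartinian, the definition (or equivalently \Cref{1}) gives an index $n$ and monomorphisms $f_k:T_n\rightarrowtail T_k$ for every $k\ge n$.

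Restrict $f_k$ to the summand $M$ sitting in coordinate $n+1$ of $T_n$; call this $g=f_k|_{M}$. It is an injective map $M\rightarrowtail T_k$, so it is nonzero. Now compose $g$ with the canonical projections $\pi_j$ of $\oplus_1^\infty M$ onto its $j$-th coordinate. Because $g\neq 0$ and $M$ embeds into a direct sum, some $\pi_j g$ is nonzero. If this happens for some $j\le k$, then $\pi_j(T_k)\subseteq N$, so $\pi_j g$ is a nonzero homomorphism $M\to N$ and we are done.

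The obstacle is that the image of $g$ might lie entirely in the tail $\oplus_{k+1}^{\infty}M$. To exclude this, we take $k$ large so that the finitely many coordinates in which $f_k$ is ``forced'' to land are among the first $k$. Since $T_n$ is not finitely generated, this does not work directly. Instead we choose a single nonzero element $x\in M$ and consider $g(x)$: it has finite support, so we can try to choose $k$ beyond that support. However, $f_k$ depends on $k$, and this circularity is the main difficulty.

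To break it, we will use the stronger fact that $f_k$ exists for \emph{all} $k\ge n$, and apply a counting argument to the $n+1$ copies of $M$ inside $T_n$. Fix $k=n$, so the maps $T_n\rightarrowtail T_n$ are trivial; that is useless. We therefore pass to $k=2n+1$ and use injectivity on the summands $M$ in coordinates $n+1,\dots,2n+1$. If every one of these copies had all nonzero coordinates lying in positions $>2n+1$, then projecting onto those positions would remain injective on their sum. We do not immediately see a contradiction from this, so as a fallback we will argue as in \Cref{E(s)}: take the copy $M$ in coordinate $n+1$ and a nonzero coordinate $\pi_j g\neq 0$. If $j\le k$ we are done. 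If $j>k$, then since the chain condition holds for all $k$, we pick $k'\ge j$ and redo the argument with $f_{k'}$, iterating. We expect this iteration step, namely guaranteeing that eventually a coordinate $j\le k$ is hit, to be the crux. A cleaner route is to modify the chain so that the tail summands are replaced by $N$ as well, i.e.\ apply monoartinianity to a chain whose terms have all coordinates inside $N$ except finitely many, with those exceptional coordinates shrinking. Then every coordinate projection of $T_k$ beyond a fixed stage lands in $N$, and any nonzero $\pi_j g$ is the desired nonzero map $M\to N$.
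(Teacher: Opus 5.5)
There is a genuine gap, and it is the one you flag yourself: nothing prevents $f_k|_M$ from landing in the tail. In fact your chain carries no information. For every $M$, every $N$ and every $k\ge n$ we have $T_n\subseteq \oplus_{1}^{\infty}M\cong\oplus_{k+1}^{\infty}M\subseteq T_k$. So the shift is a monomorphism $T_n\rightarrowtail T_k$ whatever the hypothesis, and for it every $\pi_j f_k|_M$ with $j\le k$ vanishes.

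Your fallbacks do not help. In the iteration, each new $f_{k'}$ may again be a shift, so no coordinate $j\le k'$ is ever forced. A descending chain whose terms have all but finitely many coordinates equal to $N$, with the exceptional ones shrinking, has only finitely many strict steps. Monoartinianity therefore says nothing about it, and a copy of $M$ could still land in the surviving $M$-coordinates.

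The paper's proof uses essentially the same device, the chain $\oplus_{1}^{\infty}M\supseteq N\oplus(\oplus_{2}^{\infty}M)\supseteq\oplus_{2}^{\infty}M\supseteq\cdots$, and simply asserts that $\pi f\iota\neq 0$. It has the same defect: again every term contains a copy of $\oplus_{1}^{\infty}M$, so all terms are mutually embeddable.

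The missing step cannot be supplied, because the statement is false as written. Let $R$ be the ring of upper triangular $2\times 2$ matrices over a field $K$, with $e=e_{11}$, $f=e_{22}$ and $M=eR=Ke_{11}+Ke_{12}$. Every nonzero submodule of $M$ contains $N=Ke_{12}$, so $N\subseteq_{e}M$. But ${\rm Hom}_R(M,N)\cong Ne=0$, so $M$ is not essentially retractable.

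On the other hand, a right $R$-module $X$ amounts to the $K$-linear map $Xe\to Xf$, $x\mapsto xe_{12}$. This map is injective when $X\subseteq\oplus_{1}^{\infty}M$. Choosing a basis of $Xe$ and a complement of its image in $Xf$ shows $X\cong M^{(a)}\oplus (fR)^{(b)}$, with $a=\dim_K Xe$ and $a+b=\dim_K Xf$.

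Along a descending chain $X_1\supseteq X_2\supseteq\cdots$ in $\oplus_{1}^{\infty}M$, both $\dim_K X_ie$ and $\dim_K X_if$ are non-increasing in the well-ordered set $\{0,1,2,\ldots,\aleph_0\}$. Hence they are constant, say equal to $a$ and $d$, for $i\ge n$.
\begin{itemize}
\item If $a<\aleph_0$, then $X_n\cong X_i$ for all $i\ge n$.
\item If $a=\aleph_0$, then $X_n\rightarrowtail M^{(\mathbb{N})}\rightarrowtail X_i$. For the first map, place the at most countably many copies of $fR\cong N$ in the socles of infinitely many unused copies of $M$.
\end{itemize}
Thus $\oplus_{1}^{\infty}M$ is monoartinian while $M$ is not essentially retractable. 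Any correct version of the proposition needs an additional hypothesis.
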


\begin{proof}
Assume that $N$ is an essential submodule of  $M$. Consider $N$ as a submodule of   $\oplus_{1}^{\infty} M$. Then  
$\oplus_{1}^{\infty} M \supseteq N\oplus (\oplus_{2}^{\infty} M) \supseteq \oplus_{2}^{\infty} M \supseteq N\oplus (\oplus_{3}^{\infty} M) \supseteq  \cdots $,
is a descending chain of submodules of  $\oplus_{1}^{\infty} M$ and so by hypothesis there exists $n \geq 1$  such that for any $k \geq n$ we have a non-zero $R$-monomorphism  $f: \oplus_{k}^{\infty} M \rightarrow N \oplus (\oplus_{k+1}^{\infty} M)$. Hence 
$M \overset{\iota}\hookrightarrow \oplus_{k}^{\infty} M \overset{f}\rightarrowtail  N \oplus (\oplus_{k+1}^{\infty} M) \overset{\pi}\rightarrow N $, where $\pi$  is a  suitable canonical projection,
  is a non-zero $R$-homomorphism  from $M$ to $N$, as desired.
\end{proof}

The following corollary shows that if  $M$ is a uniform injective $\mathbb{Z}$-module, then $\oplus_{1}^{\infty}  M$ is not monoartinian.

\begin{Cor}
Let $R$ be a right hereditary ring and $M$ be a uniform right $R$-module. Then\\
\indent{{\rm(a)}}. Assume that $M$ is an injective module. $\oplus_{1}^{\infty}  M$  is monoartinian if and only if $M$  is a \indent\indent simple module.\\
\indent{{\rm(b)}}. Suppose that $M$ is a projective module. If  $\oplus_{1}^{\infty} M$  is monoartinian, then $M$ is a  \indent\indent compressible  module.
\end{Cor}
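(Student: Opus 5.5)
Both parts rest on \Cref{essen ret}: if $\oplus_{1}^{\infty}M$ is monoartinian, then $M$ is essentially retractable. Right hereditarity will turn homomorphisms out of $M$ into statements about injective or projective images.

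\textbf{Part (a).} For the backward direction, let $M$ be simple. Then $\oplus_{1}^{\infty}M$ is homogeneous semisimple, so it has weakly uniserial dimension by \Cref{semisimple}, and is therefore monoartinian by \Cref{1}.

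For the forward direction, assume $\oplus_{1}^{\infty}M$ is monoartinian with $M$ uniform and injective. Let $0\neq N\subseteq M$; since $M$ is uniform, $N\subseteq_{e}M$. By \Cref{essen ret} there is a nonzero map $g:M\to N$.
\begin{itemize}
\item Over a right hereditary ring, homomorphic images of injective modules are injective, so $g(M)$ is a nonzero injective submodule of $M$.
\item Hence $g(M)$ is a direct summand of $M$. Since $M$ is uniform, hence indecomposable, $g(M)=M$.
\item Thus $M=g(M)\subseteq N$, so $N=M$.
\end{itemize}
So $M$ has no nonzero proper submodules, i.e.\ $M$ is simple.

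\textbf{Part (b).} Let $M$ be projective and uniform, and let $0\neq N\subseteq M$, which is essential in $M$. By \Cref{essen ret} there is a nonzero $g:M\to N$. Since $R$ is right hereditary, the submodule $g(M)$ of the projective module $M$ is projective. So $M\to g(M)$ splits: $M\cong \ker g\oplus g(M)$. As $M$ is indecomposable and $g(M)\neq 0$, we get $\ker g=0$. Hence $g$ is a monomorphism $M\rightarrowtail N$, and $M$ is compressible.

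\textbf{Main obstacle.} The key step is that right hereditary passes to the needed closure properties: images of injectives are injective, and submodules of projectives are projective. For arbitrary (not finitely generated) projectives the latter is Kaplansky's theorem; I would cite it, e.g.\ \cite{Lam}. Everything else is uniformity forcing indecomposability.
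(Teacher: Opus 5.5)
The closing steps of your argument are correct and match the paper's proof exactly. The paper also takes a nonzero $g\colon M\to N$ from \Cref{essen ret}. It then uses that over a right hereditary ring, quotients of injectives are injective (for (a)), and submodules of projectives are projective so that $M\to g(M)$ splits (for (b)), together with indecomposability of the uniform module $M$. You justify these points more carefully than the paper does.

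The gap is the first step: \Cref{essen ret} cannot be used. Its proof in the paper is invalid. Each term of the chain $\oplus_{1}^{\infty}M\supseteq N\oplus(\oplus_{2}^{\infty}M)\supseteq\oplus_{2}^{\infty}M\supseteq\cdots$ contains some $\oplus_{j}^{\infty}M\cong\oplus_{1}^{\infty}M$, so every term embeds in every other term, and the monoartinian hypothesis yields nothing. The monomorphism $f$ may simply be the shift of $\oplus_{k}^{\infty}M$ into $\oplus_{k+1}^{\infty}M$, and then $\pi f\iota=0$. So the real obstacle is not Kaplansky's theorem, as you suggest, but producing a nonzero map $M\to N$ at all.

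This cannot be repaired, because \Cref{essen ret} and both parts of the statement are false. Let $k$ be a field, $R=\left(\begin{smallmatrix}k&k\\0&k\end{smallmatrix}\right)$ (a hereditary ring), and $M=e_{11}R$.
\begin{itemize}
\item $M$ is uniserial of length $2$ with socle $e_{12}R$, so it is uniform.
\item $M$ is projective. It is also injective: $\mathrm{Hom}_k(Re_{22},k)$ is an indecomposable injective right $R$-module of dimension $2$, and $M$ is the only indecomposable two-dimensional right $R$-module up to isomorphism.
\item Since $\mathrm{Hom}_R(M,e_{12}R)\cong e_{12}Re_{11}=0$, $M$ is neither simple nor compressible, nor even essentially retractable.
\end{itemize}
Nevertheless $\oplus_{1}^{\infty}M$ is monoartinian. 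For a submodule $N$, put $W_2=Ne_{22}$ and $W_1=Ne_{12}\subseteq W_2$. Right multiplication by $e_{12}$ is injective on $(\oplus_{1}^{\infty}M)e_{11}$. Hence $R$-monomorphisms $N\to N'$ correspond to injective $k$-linear maps $W_2\to W_2'$ carrying $W_1$ into $W_1'$. Such a map exists if and only if $\dim_k W_1\le\dim_k W_1'$ and $\dim_k W_2\le\dim_k W_2'$. Along a descending chain these two dimensions (each at most $\aleph_0$) are non-increasing, hence eventually constant. From that point on, every term embeds in all later ones.

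Only the backward direction of (a), that $\oplus_{1}^{\infty}M$ is monoartinian when $M$ is simple, survives; you prove it correctly.
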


\begin{proof}
(a).$(\Leftarrow)$ is clear.\\
 $(\Rightarrow)$. Let $\oplus_{1}^{\infty} M$  be monoartinian. Then, by \Cref{essen ret}, there exists a non-zero $R$-homomorphism $f: M \rightarrow N$, for any non-zero submodule $N$ of  $M$. Now, since $R$ is  right hereditary and $M$ is  injective, we conclude that $N=M$.
 
 (b). Assume that $\oplus_{1}^{\infty} M$  is monoartinian. Then there exists a non-zero $R$-homomorphism $f: M \rightarrow N$, for any non-zero submodule $N$ of  $M$. By hypothesis, $M/ {\rm ker}(f)$  is projective and so the short exact sequence  $0 \rightarrow   {\rm ker}(f) \rightarrow M  \rightarrow M/ {\rm ker}(f) \rightarrow 0$, splits.  Hence
 $M \cong {\rm ker}(f) \oplus M/ {\rm ker}(f)$. But $f(M) \neq 0$ and  $M$ is uniform, therefore  ${\rm ker}(f) =0$, as desired.
\end{proof}

Recall that a ring is said to be \textit{right  mod-retractable}  if  over which every  right module is retractable. 
Also, a ring is called \textit{right max} provided every non-zero right module contains a maximal submodule, (for more details see \cite{Tam}).

\begin{Pro}\label{a}
{\rm (\cite[Proposition 3.1]{Tam})} $R$ is a right mod-retractable ring if and only if, for every non-zero
module $M$ and every $m \in M$ such that $mR \subseteq_{e} M$, there exists a non-zero homomorphism
$M \rightarrow mR$.
\end{Pro}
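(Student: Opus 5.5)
The forward direction is immediate, and the converse reduces to the given condition by passing to a quotient of $M$ modulo a complement. Recall that $R$ is right mod-retractable when every right $R$-module $M$ is retractable, i.e. ${\rm Hom}_{R}(M,N)\neq 0$ for every non-zero submodule $N$ of $M$.

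$(\Rightarrow)$. Let $M\neq 0$ and let $m\in M$ with $mR\subseteq_{e} M$. Then $m\neq 0$, since the zero submodule is not essential in a non-zero module. So $mR$ is a non-zero submodule of $M$. Retractability of $M$ then gives a non-zero homomorphism $M\rightarrow mR$.

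$(\Leftarrow)$. Let $M$ be a right $R$-module and $N$ a non-zero submodule of $M$; we must find a non-zero homomorphism $M\rightarrow N$. Choose $0\neq m\in N$. By Zorn's Lemma there is a submodule $K$ of $M$ maximal with respect to $K\cap mR=0$, i.e. a complement of $mR$ in $M$. We pass to $\overline{M}=M/K$ with the element $\overline{m}=m+K$ and proceed in three steps.
\begin{enumerate}
\item[(1)] $\overline{m}\neq 0$, because $m\notin K$ (otherwise $mR\subseteq K\cap mR=0$). In particular $\overline{M}\neq 0$.
\item[(2)] $\overline{m}R=(mR+K)/K$ is essential in $\overline{M}$. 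Let $L/K$ be a submodule of $\overline{M}$ with $L/K\cap (mR+K)/K=0$. Then $L\cap(mR+K)=K$, hence $L\cap mR\subseteq K\cap mR=0$. Since $K\subseteq L$, maximality of $K$ forces $L=K$, which proves the claim.
\item[(3)] $\overline{m}R\cong mR$. The map $mR\rightarrow (mR+K)/K$ is surjective, and its kernel is $mR\cap K=0$.
\end{enumerate}

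By (1) and (2), the hypothesis applies to $\overline{M}$ and $\overline{m}$. It yields a non-zero homomorphism $g:\overline{M}\rightarrow \overline{m}R$. Let $\pi: M\rightarrow \overline{M}$ be the canonical epimorphism and $\theta:\overline{m}R\rightarrow mR$ the isomorphism from (3). Define
$$h=\iota\,\theta\, g\,\pi : M\rightarrow mR\hookrightarrow N,$$
where $\iota$ is the inclusion $mR\subseteq N$. Since $\pi$ is surjective and $g\neq 0$, we have $g\pi\neq 0$. Since $\theta$ and $\iota$ are injective, $h\neq 0$. Hence ${\rm Hom}_{R}(M,N)\neq 0$, so $M$ is retractable, and $R$ is right mod-retractable.

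The only non-formal step is (2), the essentiality of $(mR+K)/K$ in $M/K$; it rests on the standard property of complements checked above.
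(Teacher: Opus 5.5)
Your proof is correct. The forward direction is immediate. In the converse, taking a complement $K$ of $mR$ and passing to $M/K$, where $(mR+K)/K\cong mR$ is essential, correctly reduces retractability to the hypothesis, and step (2) is properly justified by the maximality of $K$.

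The paper itself gives no proof of this statement. It quotes it from the cited reference and uses only the $(\Leftarrow)$ direction, in the proof of Proposition~\ref{perfect}. So there is no in-paper argument to compare against, and your complement argument is the natural way to establish that direction.
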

 
\begin{The}\label{b}
{\rm (\cite[Theorem 3.3]{Tam})} If $R$ is a right mod-retractable ring, then R is right max.
\end{The}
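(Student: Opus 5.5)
We argue by contradiction, using \Cref{a} to reduce to a statement about submodules of cyclic modules. Suppose $R$ is right mod-retractable but not right max. Then there is a non-zero right $R$-module $M$ with ${\rm Rad}(M)=M$, i.e.\ with no maximal submodule. We will use two elementary facts freely. First, every non-zero homomorphic image of a module without maximal submodules again has no maximal submodules, because a maximal submodule of $f(M)$ pulls back to a maximal submodule of $M$. Second, every non-zero cyclic module has a maximal submodule, by Zorn's Lemma.

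\textbf{Step 1 (making a cyclic submodule essential).} Pick $0\neq m\in M$. By Zorn's Lemma choose $N\subseteq M$ maximal with respect to $N\cap mR=0$. Then $\bar m R=(mR+N)/N$ is essential in $\bar M=M/N$, and $\bar mR\cong mR$. Moreover $\bar M\neq 0$ and, by the first fact, $\bar M$ has no maximal submodules.

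\textbf{Step 2 (applying \Cref{a}).} By \Cref{a} there is a non-zero homomorphism $g:\bar M\to \bar mR$. Its image $L=g(\bar M)$ is a non-zero submodule of the cyclic module $C=\bar mR$, and $L={\rm Rad}(L)$ by the first fact. So the problem is reduced to the following: show that no non-zero submodule $L$ of a cyclic module $C$ satisfies ${\rm Rad}(L)=L$.

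\textbf{Step 3 (the expected obstacle).} The reduced statement does not follow from the second fact alone, since only $C$ is known to be cyclic, not $L$. I would use retractability a second time, now on modules built from $C$ and $L$, as follows.
\begin{enumerate}[(i)]
\item Choose $P\subseteq C$ maximal with $P\cap L=0$. Then $C/P$ is cyclic and contains an essential copy of $L$.
\item By \Cref{a} (with essential cyclic submodules $zR\subseteq L$), or by retractability of $C/P$ directly, obtain non-zero maps from cyclic modules onto cyclic submodules of $L$.
\item Since ${\rm Rad}(L)=L$, each such cyclic submodule $zR$ is small in $L$, and $L/zR$ again has no maximal submodules.
\item Apply the whole construction to $L/zR$, to the modules $C/zR$ containing it, and so on. Organise this by a Zorn's Lemma argument over the submodules $K\subseteq L$ for which $L/K$ still has no maximal submodules.
\end{enumerate}
The aim is to reach a module in which $L$ (or a quotient of it) becomes cyclic, or is forced to equal its ambient cyclic module; either outcome contradicts the second fact.

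Steps 1 and 2 are routine. The real difficulty is Step 3, namely converting ``maps into a cyclic module'' into a maximal submodule of $L$ itself, and this is where I expect the main work to lie.
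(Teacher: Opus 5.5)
Steps 1 and 2 are correct. However, they only reduce the theorem to a special case of itself. Step 3, where the proof would actually have to happen, contains no argument, so the proposal has a genuine gap.

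A non-zero submodule $L$ of a cyclic module with ${\rm Rad}(L)=L$ is still just a module without maximal submodules. Sitting inside a cyclic module gives nothing by itself: over a valuation domain whose maximal ideal satisfies $\mathfrak{m}=\mathfrak{m}^{2}$, the ideal $\mathfrak{m}\subseteq R_R$ has ${\rm Rad}(\mathfrak{m})\supseteq \mathfrak{m}{\rm J}(R)=\mathfrak{m}$. Your items in Step 3 do not get past this:
\begin{enumerate}[(i)]
\setcounter{enumi}{1}
\item only yields non-zero maps onto cyclic submodules $zR$ of $L$, which exist trivially;
\item passes to $L/zR$, which is the same situation again;
\item is vacuous. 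Every quotient $L/K$ automatically has no maximal submodules, since a maximal submodule of $L/K$ would pull back to one of $L$. So your family is the set of all submodules of $L$, and Zorn's Lemma just returns $K=L$.
\end{enumerate}
The root problem is the one you noticed yourself. Mapping into a cyclic module never forces the image to be cyclic, so retractability aimed at cyclic targets, or \Cref{a}, cannot by itself produce a maximal submodule.

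The missing idea is to aim retractability at a \emph{simple} submodule. A non-zero homomorphism into a simple module is automatically onto, so its kernel is maximal. Let $M\neq 0$, pick $0\neq m\in M$, and take a maximal submodule $T$ of the cyclic module $mR$. Then $X=M/T$ is non-zero and contains the simple submodule $mR/T$. Since $X$ is retractable, there is a non-zero map $\varphi:X\to mR/T$. It is surjective, so $\ker\varphi$ is a maximal submodule of $X$, and its preimage in $M$ is a maximal submodule of $M$.

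The paper gives no proof of its own here; it quotes \cite[Theorem 3.3]{Tam}. The direct argument above is all that is required. It uses retractability of a single quotient of $M$ and needs neither \Cref{a} nor your essential-extension reduction.
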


\begin{Lem}\label{c}
{\rm (\cite[Lemma 3.2]{Tam})} If $R$ is a right max ring, then ${\rm J}(R)$ is right T-nilpotent.
\end{Lem}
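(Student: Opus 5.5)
The plan is to argue by contraposition. Assume ${\rm J}(R)$ is not right T-nilpotent and build a non-zero right $R$-module $M$ with $M={\rm J}(R)M$-type behaviour, namely $M=M{\rm J}(R)$. Such a module has no maximal submodule, so $R$ is not right max. I use the convention under which failure of right T-nilpotence gives a sequence $a_{1},a_{2},\ldots$ in ${\rm J}(R)$ with $a_{n}a_{n-1}\cdots a_{1}\neq 0$ for every $n$. With the opposite convention one reverses the indexing of the relations below and otherwise repeats the same argument.

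\textbf{Construction.} Let $F$ be the free right $R$-module with basis $x_{1},x_{2},\ldots$. Let $N$ be the submodule generated by the elements $x_{i}-x_{i+1}a_{i}$ for $i\geq 1$, and put $M=F/N$, writing $\bar{x}_{i}$ for the image of $x_{i}$. The relations give $\bar{x}_{i}=\bar{x}_{i+1}a_{i}\in M{\rm J}(R)$ for every $i$. Since the $\bar{x}_{i}$ generate $M$, this yields $M=M{\rm J}(R)$.

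\textbf{No maximal submodule.} Suppose $K$ is a maximal submodule of $M$. Then $M/K$ is simple, so it is annihilated by ${\rm J}(R)$. Hence $M{\rm J}(R)\subseteq K$, giving $M\subseteq K$, which is a contradiction. Thus, once $M\neq 0$ is known, $M$ is a non-zero module without maximal submodules, contradicting the assumption that $R$ is right max.

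\textbf{Main step: showing $M\neq 0$.} It suffices to show $x_{1}\notin N$, and this is the one place where the hypothesis on the sequence is used. Suppose instead that
$$x_{1}=\sum_{i=1}^{n}(x_{i}-x_{i+1}a_{i})r_{i}$$
for some $r_{i}\in R$. Comparing coefficients in the free basis proceeds as follows.
\begin{itemize}
\item[] The coefficient of $x_{1}$ gives $r_{1}=1$.
\item[] For $2\leq k\leq n$, the coefficient of $x_{k}$ gives $r_{k}=a_{k-1}r_{k-1}$, so inductively $r_{k}=a_{k-1}\cdots a_{1}$.
\item[] The coefficient of $x_{n+1}$ gives $a_{n}r_{n}=0$, that is, $a_{n}a_{n-1}\cdots a_{1}=0$.
\end{itemize}
The last equation contradicts the choice of the sequence. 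Hence $x_{1}\notin N$, so $M\neq 0$, which completes the contrapositive and shows that ${\rm J}(R)$ is right T-nilpotent.

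I expect this bookkeeping to be the only delicate point. The work lies in matching the side on which the $a_{i}$ act with the T-nilpotence convention, so that the telescoping product comes out as exactly the forbidden one.
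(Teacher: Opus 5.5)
The paper gives no proof of this lemma; it only cites \cite[Lemma 3.2]{Tam}. The argument behind that citation is exactly yours. A maximal submodule $K$ of a nonzero right module $M$ forces $M{\rm J}(R)\subseteq K\neq M$. Moreover, ${\rm J}(R)$ is right T-nilpotent as soon as $M{\rm J}(R)\neq M$ for every nonzero right module $M$ (Bass; see Anderson--Fuller, \S 28). Your proof is correct and makes that second fact self-contained. The module $F/N$ with relations $x_{i}=x_{i+1}a_{i}$ is the standard one. Your coefficient comparison ($r_{1}=1$, $r_{k}=a_{k-1}\cdots a_{1}$, hence $a_{n}\cdots a_{1}=0$) is also carried out correctly.

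One correction concerns your remark about conventions. The convention you adopted (right T-nilpotence means $a_{n}\cdots a_{1}=0$ eventually) is not one of two interchangeable choices. It is the only one under which the lemma is true. In a right module each new relation multiplies the accumulated coefficient on the left, so you always obtain products $a_{n}\cdots a_{1}$. No reindexing turns an infinite sequence with $a_{1}a_{2}\cdots a_{n}\neq 0$ for all $n$ into one with $b_{n}\cdots b_{1}\neq 0$ for all $n$. Indeed, a ring that is right perfect but not left perfect (Bass) is right max, yet its Jacobson radical contains a sequence with $a_{1}\cdots a_{n}\neq 0$ for every $n$. So the sentence claiming the argument adapts to the opposite convention should simply be deleted.
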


\begin{Pro}\label{perfect}
Let $R$ be a ring such that every right $R$-module is monoartinian. Then $R$ is a right perfect ring.
\end{Pro}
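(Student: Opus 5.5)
The plan is to verify the two defining conditions of a right perfect ring separately: $R/{\rm J}(R)$ is semisimple, and ${\rm J}(R)$ is right T-nilpotent. Throughout, the hypothesis will be used in the form given by \Cref{1}: every right $R$-module is monoartinian if and only if every right $R$-module has weakly uniserial dimension.

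For semisimplicity of $R/{\rm J}(R)$, I will apply \Cref{first every}(b). This lemma is legitimately available because, by \Cref{1}, the hypothesis of \Cref{first every} holds. It shows that $R$ has only finitely many maximal right ideals, say $\mathfrak{m}_{1},\dots,\mathfrak{m}_{t}$. Then ${\rm J}(R)=\bigcap_{i}\mathfrak{m}_{i}$. The canonical map $R\to\oplus_{i=1}^{t}R/\mathfrak{m}_{i}$ therefore has kernel ${\rm J}(R)$, so it embeds $R/{\rm J}(R)$ into a finite direct sum of simple modules. Submodules of semisimple modules are semisimple, so $R/{\rm J}(R)$ is semisimple, and in particular $R$ is semilocal.

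For T-nilpotence, I will show that $R$ is right mod-retractable and then pass through \Cref{b} and \Cref{c}. By \Cref{a}, it suffices to consider a non-zero right module $M$ and an element $m\in M$ with $mR\subseteq_{e}M$, and to produce a non-zero homomorphism $M\to mR$. By hypothesis, $\oplus_{1}^{\infty}M$ is a right $R$-module and hence is monoartinian. \Cref{essen ret} then says that $M$ is essentially retractable, so ${\rm Hom}_{R}(M,N)\neq 0$ for every essential submodule $N$ of $M$. Taking $N=mR$ gives the required non-zero map. Hence $R$ is right mod-retractable. By \Cref{b}, $R$ is right max, and \Cref{c} then shows that ${\rm J}(R)$ is right T-nilpotent.

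Combining the two parts, $R$ is semilocal with right T-nilpotent Jacobson radical, which is exactly the characterization of right perfect rings (Bass). I do not expect a serious obstacle, since each step is a direct citation. The only point needing care is that \Cref{first every} is phrased in terms of weakly uniserial dimension, so its use must be routed explicitly through \Cref{1}.
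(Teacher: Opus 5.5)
You follow the paper's proof of \Cref{perfect} step for step, and your own additions (the embedding of $R/{\rm J}(R)$ into $\oplus_{i}R/\mathfrak{m}_{i}$, and routing through \Cref{1} in the valid direction, monoartinian $\Rightarrow$ weakly uniserial dimension) are fine. The trouble is that the two in-paper results you rely on do not hold up, so the argument, yours and the paper's, has two genuine gaps.

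First, \Cref{first every}(b) is false as stated. Take $R=M_{2}(\mathbb{Q})$. Every right $R$-module is a direct sum of copies of the unique simple module. Along a descending chain the number of summands is a non-increasing sequence of cardinals, hence eventually constant. So every right $R$-module is monoartinian, and by \Cref{1} has weakly uniserial dimension. Yet the right ideals $\{A : wA=0\}$, one for each line $\mathbb{Q}w\subseteq\mathbb{Q}^{1\times 2}$, are infinitely many distinct maximal right ideals. The paper's proof silently assumes that distinct maximal right ideals have non-isomorphic quotients, which is what applying \Cref{semisimple} requires. What that argument actually yields, directly via the chain $\oplus_{i\geq n}S_{i}$, is only that there are finitely many isomorphism types of simple right modules. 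For noncommutative rings this does not make $R/{\rm J}(R)$ semisimple: Cozzens' simple noetherian $V$-domains have a unique simple module and zero radical without being artinian. So your step ``$R/{\rm J}(R)\rightarrowtail\oplus_{i=1}^{t}R/\mathfrak{m}_{i}$'' has no valid input, and semisimplicity of $R/{\rm J}(R)$ still needs an argument.

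Second, the proof of \Cref{essen ret} does not use its hypothesis. In the chain $\oplus_{1}^{\infty}M\supseteq N\oplus(\oplus_{2}^{\infty}M)\supseteq\oplus_{2}^{\infty}M\supseteq\cdots$, a monomorphism $\oplus_{k}^{\infty}M\rightarrowtail N\oplus(\oplus_{k+1}^{\infty}M)$ always exists: just shift indices, $\oplus_{k}^{\infty}M\cong\oplus_{k+1}^{\infty}M$. For that map the composite $M\to\oplus_{k}^{\infty}M\to N\oplus(\oplus_{k+1}^{\infty}M)\to N$ is zero, and nothing forces a better choice. Since only the stabilization of this one chain is used, the argument would show that every module is essentially retractable. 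That fails for $M=\mathbb{Z}_{p^{\infty}}$ and $N=\mathbb{Z}_{p}\subseteq_{e}M$, where ${\rm Hom}(M,N)=0$. Consequently mod-retractability, and with it the right T-nilpotence of ${\rm J}(R)$, is not established either. The downstream citations (\Cref{a}, \Cref{b}, \Cref{c}, Bass) are fine. What is missing is a genuine derivation, from the monoartinian hypothesis, of (i) semisimplicity of $R/{\rm J}(R)$ and (ii) essential retractability of every module, or directly the right T-nilpotence of ${\rm J}(R)$.
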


\begin{proof}
By \Cref{first every}(b), $R$ is a semilocal ring, i.e, $R/J(R)$  is  semisimple. Since $\oplus_{1}^{\infty} M$ is monoartinian, for every right $R$-module   $M$, $M$ is an essentially  retractable module. Hence  \Cref{a}, implies that   $R$ is a right mod-retractable ring. Thus  by \Cref{b}, R is right max and so by \Cref{c},  ${\rm J}(R)$ is right T-nilpotent. Therefore, $R$ is a right perfect ring.
\end{proof}

\begin{Cor}
 Let $R$ be a right noetherian ring such that every right $R$-module is monoartinian.  Then $R$ is a right artinian ring.
\end{Cor}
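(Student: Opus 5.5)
The corollary follows by combining \Cref{perfect} with the standard fact that a right perfect, right noetherian ring is right artinian. Since every right $R$-module is monoartinian, \Cref{perfect} gives that $R$ is right perfect. In particular $R/{\rm J}(R)$ is semisimple and ${\rm J}(R)$ is right T-nilpotent, hence nil.

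With this in hand, the quickest route is \Cref{first every}(c). By \Cref{1}, every right $R$-module has weakly uniserial dimension, so \Cref{first every} applies. Since $R$ is right noetherian and ${\rm J}(R)$ is nil, part (c) of that lemma gives that $R$ is right artinian.

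To make the argument independent of the cited exercise, I would also give the direct justification. Write $J={\rm J}(R)$. Since $R$ is right noetherian, $J$ is finitely generated as a right ideal. A nil one-sided ideal in a right noetherian ring is nilpotent (Levitzki's theorem), so $J^{m}=0$ for some $m$. Each factor $J^{i}/J^{i+1}$ is a finitely generated right module over the semisimple ring $R/J$, hence has finite length. The filtration
$$R\supseteq J\supseteq J^{2}\supseteq\cdots\supseteq J^{m}=0$$
then shows that $R_{R}$ has finite length, so $R$ is right artinian.

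The only nontrivial input beyond the earlier results is nilpotency of $J$, and I expect this to be the main point to check. Levitzki's theorem handles it. Alternatively, one can argue from right T-nilpotence directly: by the noetherian hypothesis the chain ${\rm l.ann}(J^{k})$ stabilizes, and a standard argument then yields $J^{m}=0$. Everything else is formal bookkeeping.
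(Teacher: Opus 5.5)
Your argument is correct and is essentially the paper's proof. The paper's proof is the one-line combination of \Cref{perfect}, which makes $R$ right perfect and hence ${\rm J}(R)$ nil, with \Cref{first every}(c), which applies through \Cref{1}; your Levitzki-plus-filtration paragraph only unpacks the exercise cited in the proof of \Cref{first every}(c), and like the paper it still relies on \Cref{perfect} for the semisimplicity of $R/{\rm J}(R)$.
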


\begin{proof}
The proof is clear, by \Cref{perfect} and \ref{first every}(c).
\end{proof}

\end{document}